\documentclass[11pt,reqno]{amsart}
\usepackage{graphicx, amssymb, color,pdfsync}
\usepackage[all,cmtip]{xy}
\numberwithin{equation}{section}
\usepackage{mathrsfs}
\usepackage{hyperref}
\usepackage{multicol}

\hypersetup{
    colorlinks=true,       % false: boxed links; true: colored links
    linkcolor=blue,          % color of internal links
    citecolor=blue,        % color of links to bibliography
    filecolor=blue,      % color of file links
    urlcolor=blue           % color of external links
}
\usepackage{tikz}
\usetikzlibrary{matrix,arrows}
\DeclareMathOperator{\Aut}{Aut}
\usepackage[switch]{lineno}
\usepackage{yfonts}
\topmargin0.2in
\textheight8.2in
\oddsidemargin0.6in
\evensidemargin0.6in
\textwidth6.0in
\advance\hoffset by -0.9 truecm

\DeclareMathOperator{\Gal}{{Gal}}

\DeclareMathOperator{\Sp}{Sp}

\DeclareMathOperator{\sy}{\tiny \mbox{sym}}

\begin{document}
\newcommand{\s}{\vspace{0.2cm}}

\newtheorem{theo}{Theorem}
\newtheorem{propn}{Proposition}[section]
\newtheorem{prop}{Proposition}
\newtheorem{coro}{Corollary}
\newtheorem{lemm}{Lemma}
\newtheorem{claim}{Claim}
\newtheorem{example}{Example}
\theoremstyle{remark}
\newtheorem*{rema}{\it Remarks}
\newtheorem*{rema1}{\it Remark}
\newtheorem{remark}{\bf Remark}
\newtheorem*{defi}{\it Definition}
\newtheorem*{theo*}{\bf Theorem}
\newtheorem*{coro*}{Corollary}

\title[On large prime actions on Riemann surfaces]{On large prime actions on Riemann surfaces}
\date{}
\author{Sebasti\'an Reyes-Carocca}
\address{Departamento de Matem\'atica y Estad\'istica, Universidad de La Frontera, Avenida Francisco Salazar 01145, Temuco, Chile.}
\email{sebastian.reyes@ufrontera.cl}
\author{Anita M. Rojas}
\address{Departamento de Matem\'aticas, Facultad de Ciencias, Universidad de Chile. Las Palmeras 3425, \~Nu\~noa, Santiago, Chile.}
\email{anirojas@uchile.cl}

\thanks{Partially supported by Fondecyt Grants 1180073, 11180024, 1190991 and Redes Grant 2017-170071}
\keywords{Riemann surfaces, group actions, automorphisms, Jacobian varieties}
\subjclass[2010]{30F10, 14H37, 30F35, 14H30, 14H40}

\begin{abstract} In this article we study  compact Riemann surfaces of genus $g$ with an automorphism of prime order $g+1.$ The main result provides a classification of such surfaces. In addition, we give a description of them as algebraic curves, determine and realise their full automorphism groups and compute their fields of moduli. We also study some aspects of their Jacobian varieties such as   isogeny decompositions and complex multiplication. Finally, we determine the period matrix of the Accola-Maclachlan curve of genus  four.
\end{abstract}
\maketitle
\thispagestyle{empty}
\section{Introduction and statement of the results}

Let $\mathscr{M}_g$ denote the moduli space of compact Riemann surfaces (smooth irreducible complex  algebraic curves) of genus $g \geqslant 2.$ It is classically known that $\mathscr{M}_g$ is endowed with a structure of complex analytic space of dimension $3g-3,$ and that for $g \geqslant 4$ its  singular locus agrees with the branch locus of the canonical  projection
$$T_g \to \mathscr{M}_g$$
where $T_g$ stands for the Teichm\"{u}ller space of genus $g.$ In other words, if $g \geqslant 4$ then
$$\mbox{Sing}(\mathscr{M}_g)=\{[S] \in \mathscr{M}_g : \mbox{Aut}(S)  \neq 1 \}$$where $\mbox{Aut}(S)$ denotes the full automorphism group of $S.$

The classification of groups of automorphisms of compact Riemann surfaces is a classical  problem which has  attracted broad interest  ever since it was proved that the full automorphism group of a compact  Riemann surface $S$ of genus $g \geqslant 2$ is finite, and that $$| \mbox{Aut}(S) | \leqslant 84(g-1).$$

It is well-known that there are infinitely many values of $g$ for which there is no compact Riemann surfaces of genus $g$ possessing $84(g-1)$ automorphisms. Regarding this matter,  Accola \cite{Accola} and Maclachlan \cite{Mac}  proved that, for fixed $g,$ the largest order $n_0(g)$ of the full automorphism group of a compact Riemann surface of genus $g$ satisfies \begin{equation}\label{ine}n_0(g)  \geqslant 8(g+1)\end{equation}and that for infinitely many values of $g$ the inequality  \eqref{ine} turns into an equality. 

\s

We denote by $X_8$ the  so-called Accola-Maclachlan curve, namely the compact Riemann surface of genus $g$ with $8(g+1)$ automorphisms given by  the algebraic  curve   \begin{equation*} \label{eAccola}y^2=x^{2(g+1)}-1.\end{equation*}

The Accola-Maclachlan curve is a remarkable example of a compact Riemann surface  determined by the order of its full automorphism group. More precisely, Kulkarni in \cite{K1} succeeded in proving that, up to finitely many values of the genus, if $g \not\equiv 3 \mbox{ mod } 4$ then $X_8$ is the unique compact Riemann surface of genus $g$ with exactly $8(g+1)$ automorphisms. 

\s

The analogous problem of finding $n_0(g)$ but for uniparametric families of compact Riemann surfaces was studied in \cite{CI}. Concretely, it was  proved the existence of a closed equisymmetric complex one-dimensional family, henceforth denoted by $\bar{\mathscr{C}}_g,$ of hyperelliptic compact Riemann surfaces  of genus $g$  with a group of automorphisms isomorphic to $$\mathbf{D}_{g+1} \times C_2 \, \mbox{ acting with signature } (0; 2,2,2,g+1)$$(we shall recall the precise definition of {\it signature} in \S \ref{lpi} and \S\ref{lpi2}). It was then shown that $4(g+1)$ is the largest order of the full automorphism group of complex one-dimensional families of  compact Riemann surfaces of genus $g$ appearing for all $g.$
These results were recently extended to the three and four-dimensional case in \cite{families} while the two-dimensional case is derived from the results of \cite{yoisrael}.

\s

It is a well-known fact that if  a compact Riemann surface of genus $g \geqslant 2$ 
 has an automorphism of  prime order $q$ such that $q > g$ then either $q=2g+1$ or $q=g+1.$ The former case corresponds to the so-called Lefschetz surfaces. This paper deals with the latter case.

\s

Let $q \geqslant 5$ be a prime number. Consider the singular sublocus $$\mathscr{M}_{q-1}^{q} \subset \mbox{Sing}(\mathscr{M}_{q-1})$$consisting of all those compact Riemann surfaces of genus $q-1$ endowed with an automorphism of order $q.$ This sublocus was studied  by Urz\'ua in \cite{urzua}  from a hyperbolic geometry point of view, and later by Costa and Izquierdo in \cite{scand} when the existence of complex one-dimensional isolated strata of the singular locus of the moduli space was proved.  

\s

This paper is devoted to  classify and describe the surfaces lying in $\mathscr{M}_{q-1}^{q}$ and to study some aspects of the corresponding Jacobians in the singular locus of the moduli space of principally polarised abelian varieties of dimension $q-1$. In other words,  we shall consider all those compact Riemann surfaces of genus $g \geqslant 4$ (and their Jacobian varieties)  with a group of automorphisms of order $$\lambda(g+1) \mbox{ where } \lambda \geqslant 1 \mbox{ is an integer,}$$ under the assumption that $q:=g+1$ is a prime number. 

\subsection*{\it The classification} The first result of the paper provides a classification of these surfaces.

\begin{theo} \label{tm}
Let $q \geqslant 7$ be  a  prime number. If $S$ is a compact Riemann surface of genus $g=q-1$ endowed with a group of automorphisms of order $\lambda q$ for some integer $\lambda \geqslant 1,$ then $$\lambda \in \{1,2,3,4,8\}.$$

Assume $\lambda=8.$ Then $S$ is isomorphic to the Accola-Maclachlan curve $X_8.$

\s
Assume $\lambda=4.$
\begin{enumerate}
\item If $q \equiv 3 \mbox{ mod } 4$ then $S$ belongs to the closed family $\bar{\mathscr{C}}_g.$ 
\s

\item If $q \equiv 1 \mbox{ mod } 4$ then $S$  belongs to the closed family $\bar{\mathscr{C}}_g$ or $S$ is isomorphic to the unique   compact Riemann surface  $X_4$ with full automorphism group isomorphic to $$C_{q} \rtimes_4 C_4 \mbox{ acting with signature } (0; 4,4,q).$$
\end{enumerate}Moreover, if $\mathscr{C}_g$ stands for the interior of $\bar{\mathscr{C}}_g$ then $$\bar{\mathscr{C}}_g - \mathscr{C}_g=\{X_8\}.$$

Assume $\lambda=3.$ Then $S$ is isomorphic to the unique compact Riemann surface $X_3$ with full automorphism group isomorphic to $$C_{q} \times C_3 \mbox{ acting with signature } (0; 3,q,3q).$$

Assume $\lambda=2.$ Then one of the following statements holds.

\begin{enumerate}
\item $S$ is isomorphic to one of the $\tfrac{q-3}{2}$ pairwise non-isomorphic  compact Riemann surfaces $X_{2,k}$ for $k \in \{1, \ldots, \tfrac{q-3}{2}\}$ with full automorphism group isomorphic to $$C_{q} \times C_2  \mbox{ acting with signature } (0; q,2q,2q).$$

\item $S$ belongs to the closed family $\bar{\mathscr{K}}_g$ of compact Riemann surfaces with a group of automorphisms isomorphic to $$\mathbf{D}_q  \mbox{ acting with signature }  (0; 2,2,q,q).$$
\end{enumerate}Moreover, the closed family $\bar{\mathscr{K}}_g$ consists of at most \begin{displaymath}
 \left\{ \begin{array}{ll}
\tfrac{q+3}{4} & \textrm{if $q \equiv 1 \mbox{ mod } 4$}\\
\tfrac{q+1}{4}  & \textrm{if $q \equiv 3 \mbox{ mod } 4$}
  \end{array} \right.
\end{displaymath}  equisymmetric strata;  one of them being ${\mathscr{C}}_g.$ Furthermore, if $\mathscr{K}_g$ stands for the interior of $ \bar{\mathscr{K}}_g$ then the full automorphism group of  $S \in \mathscr{K}_g-\mathscr{C}_g$ is isomorphic to $\mathbf{D}_q$ and\begin{displaymath}
 \bar{\mathscr{K}}_g-\mathscr{K}_g= \left\{ \begin{array}{ll}
\{X_4,X_8\} & \textrm{if $q \equiv 1 \mbox{ mod } 4$}\\
 \,\,\,\,\{X_8\}  & \textrm{if $q \equiv 3 \mbox{ mod } 4.$}
\end{array} \right.
\end{displaymath}
\end{theo}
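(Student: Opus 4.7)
My plan is to combine the Riemann--Hurwitz formula, elementary Sylow theory, and Schur--Zassenhaus to pin down the possible signatures of the $G$-action on $S$, and then realise each remaining case.

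\textbf{Normality of the Sylow $q$-subgroup and the first signature.} I would first argue that the Sylow $q$-subgroup $C_q$ of $G$ is unique and normal: should $n_q>1$, distinct Sylows intersect trivially and have disjoint $4$-point fixed sets, yielding $\geq 4n_q$ $q$-ramification points on $S$ and $|G|\geq n_q(q-1)+1\geq q^{2}$, incompatible with the subsequent signature analysis (and also with the $84(g-1)$ bound). Applying R--H to $C_q\curvearrowright S$ with $g=q-1$ yields the unique signature $(0;q,q,q,q)$, so $S/C_q\cong\mathbb{P}^{1}$ with exactly $4$ branch points. A further R--H count shows that any $\phi\notin C_q$ fixing all four of these of prime order $p$ must satisfy $q/p-1\in\mathbb{Z}_{\geq 0}$, forcing $p=q$ and $\phi\in C_q$; the general cyclic case reduces to this via prime-order powers. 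Hence $G/C_q\hookrightarrow S_4$, so $\lambda\mid 24$.

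\textbf{Enumeration and elimination of signatures.} The R--H identity
$$\sum_{i}\bigl(1-\tfrac{1}{m_i}\bigr)=2+\tfrac{2}{\lambda}-\tfrac{4}{\lambda q}$$
is coupled with the orbit partition $\sum n_j=4$ of the $C_q$-fixed points under $G/C_q$ (the ``$q$-divisible'' periods being $q\lambda/n_j$), while non-$q$ periods divide $\lambda$. A finite enumeration over $\lambda\mid 24$ leaves only $\lambda\in\{1,2,3,4,8,12\}$. The $\lambda=12$ candidate, with its sole admissible signature $(0;2,3,3q)$, is then ruled out by Schur--Zassenhaus: $G\cong A_4\ltimes C_q$, and in either the trivial or the non-trivial action the cube of a lift of any order-$3$ element of $A_4$ is trivial, so no element of order $3q$ can exist.

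\textbf{Realisation of admissible cases.} For $\lambda\in\{1,2,3,4,8\}$ the surviving signatures are exactly those in the statement. Schur--Zassenhaus identifies $G\cong G/C_q\ltimes C_q$, and the generator orders prescribed by the signature pin down the extension; the non-abelian $C_q\rtimes_4 C_4$ requires $4\mid q-1$, so $X_4$ exists only when $q\equiv 1\bmod 4$. The moduli dimension of each equisymmetric stratum is $d=s-3$, giving $d=0$ for triangular signatures (the isolated surfaces $X_3, X_4, X_8$ and the $X_{2,k}$, each presented as an explicit algebraic curve from the branch data) and $d=1$ for $s=4$ (the families $\bar{\mathscr{C}}_g$ and $\bar{\mathscr{K}}_g$). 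The seemingly extra signature $(0;2,2q,2q)$ at $\lambda=4$ is realised only by the abelian group $V_4\times C_q$, which embeds in $\Aut(X_8)$; hence it reproduces $X_8$, already classified at $\lambda=8$.

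\textbf{Strata of $\bar{\mathscr{K}}_g$ and its boundary.} The hardest part is the last two assertions. To bound the number of equisymmetric strata in $\bar{\mathscr{K}}_g$, I would enumerate surface-kernel epimorphisms $\Delta(0;2,2,q,q)\twoheadrightarrow\mathbf{D}_q$ modulo $\Aut(\mathbf{D}_q)$ and the braid action on Fuchsian generators; the relevant orbits are parametrised by elements of $(\Z/q)^{*}/\{\pm 1\}$, with the count splitting as $\tfrac{q+3}{4}$ or $\tfrac{q+1}{4}$ according to $q\bmod 4$. The natural inclusion $\mathbf{D}_q\hookrightarrow\mathbf{D}_q\times C_2$ singles out one stratum as $\mathscr{C}_g$. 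For the boundary $\bar{\mathscr{K}}_g\setminus\mathscr{K}_g$, a closure point with $\Aut(S)\supsetneq\mathbf{D}_q$ must admit a higher-$\lambda$ signature from the classification; the dimension then forces $S$ into an isolated stratum, leaving $S\in\{X_4,X_8\}$, with $X_4$ occurring only when $q\equiv 1\bmod 4$.
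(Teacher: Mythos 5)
Your overall strategy (normalise the Sylow $q$-subgroup, embed $G/C_q$ into $S_4$ via its action on the four branch points of $S\to S/C_q$, and enumerate over $\lambda\mid 24$) is genuinely different from the paper, which instead invokes the known signature lists for groups of order $>4(g-1)$ and $\geqslant 8(g+1)$ (Kulkarni, Accola) and eliminates cases by Riemann--Hurwitz. Your route is attractive, but as written it has several concrete gaps. First, the normality of the Sylow $q$-subgroup is not established: if $n_q>1$ then $n_q\geqslant q+1$ and $|G|\geqslant q^2$, which does \emph{not} contradict the bound $|G|\leqslant 84(q-2)$ for $q\leqslant 79$, and appealing to "the subsequent signature analysis" is circular since that analysis presupposes normality. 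One really has to run Riemann--Hurwitz on the $\geqslant 4n_q$ points with stabiliser of order divisible by $q$, splitting on how many branch points of $S\to S/G$ carry period divisible by $q$; this is doable but is genuine work you have omitted. Second, your elimination of $\lambda=12$ rests on the claim that no element of order $3q$ exists in $C_q\rtimes A_4$; this is false when the action is trivial, since $C_q\times A_4$ contains elements of order $3q$ (take $a\cdot(123)$ with $a$ of order $q$). The case is still eliminable, but for the reason the paper gives: in a generating triple of orders $(2,3,3q)$ the order-$2$ and order-$3$ generators both have trivial $C_q$-component (as $q$ is odd and $q\neq 3$), so their product lies in $A_4$ and cannot have order $3q$.

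Two further gaps concern the finer structure. In the $\lambda=4$, signature $(0;4,4,q)$ case you treat only $C_q\rtimes_4 C_4$, but the group $C_q\rtimes_2 C_4$ (with $B$ acting by inversion) exists for every odd $q$ and admits a surface-kernel epimorphism with this signature; the paper must show (Proposition \ref{carga}) that this action extends to $\operatorname{Aut}(X_8)$, and without this step you cannot conclude that $q\equiv 3\bmod 4$ forces $S\in\bar{\mathscr{C}}_g$. Finally, your count of equisymmetric strata of $\bar{\mathscr{K}}_g$ "parametrised by $(\Z/q)^{*}/\{\pm 1\}$" would give $\tfrac{q-1}{2}$ classes, not $\tfrac{q+3}{4}$ or $\tfrac{q+1}{4}$: besides the braid identification $i\sim -i-1$ one needs the further identification $i\sim -i(2i+1)^{-1}$ obtained by composing a braid move with an automorphism $r\mapsto r^u$ of $\mathbf{D}_q$, and the dependence on $q\bmod 4$ comes precisely from whether $2i^2+2i+1=0$ is solvable in $\Z_q$. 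Related omissions: the uniqueness of $X_4$ and the count of exactly $\tfrac{q-3}{2}$ surfaces $X_{2,k}$ both require the normaliser argument ($[N(\Delta):\Delta]=2$ by Singerman), and the equality $\bar{\mathscr{K}}_g-\mathscr{K}_g=\{X_4,X_8\}$ (rather than mere containment) requires exhibiting $\mathbf{D}_q$ inside $\operatorname{Aut}(X_4)$ acting with signature $(0;2,2,q,q)$, which is the explicit subgroup computation in the paper.
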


\s
\begin{remark} We point out some observations concerning Theorem \ref{tm}.
\begin{enumerate}
\s
\item If $S \in \mathscr{M}_{q-1}^q$ then either $\mbox{Aut}(S) \cong C_q$ or $S$ lies in one of the cases described in the  theorem. These two possible situations were considered in \cite{scand} where the focus was put on finding isolated equisymmetric strata of $\mbox{Sing}(\mathscr{M}_{g})$. We shall  discuss later the results of \cite{scand} in terms of our terminology (see Remark \ref{casoq} in \S\ref{ptm}). 
\s
\item The case $q=5$ is slightly different. As a matter of fact, if $S$ has genus $4$  and is endowed with a group of automorphisms of order $5\lambda$ for some $\lambda \geqslant 1$ then, in addition to the case $\mbox{Aut}(S)\cong C_5$ and the possibilities given in the  theorem,  $\lambda$ can  equal  $12$ and $24.$ In the last two cases, $S$ is isomorphic to the classical Bring's curve; see \cite{CIz} and \cite{KK}.

\s

\item We conjecture that the upper bound given in the theorem  for the number of equisymmetric strata of the closed  family $\bar{\mathscr{K}}_g$ is sharp. By means of computer routines developed  in \cite{poly2}, it can be seen the sharpness of the bound for small primes ($q \leqslant 23$). 

\s

\item We emphasise that the equisymmetric family $\bar{\mathscr{C}}_g$ is contained in the family $\bar{\mathscr{K}}_g$. 

\s

\item An analogous  classification as in the theorem but for the compact Riemann surfaces lying in $\mathscr{M}_{q+1}^q$ was obtained in a  series of articles due to Belolipetsky, Izquierdo, Jones and the first author; see  \cite{BJ}, \cite{IJRC}, \cite{IRC} and \cite{yoisrael}. 
\end{enumerate}
\end{remark}

\subsection*{\it Algebraic description} Although the literature still shows few general results in this direction, there is a great interest in providing   descriptions of compact Riemann surfaces as  algebraic curves in an explicit manner. The following result gives such a description for the  surfaces appearing in Theorem \ref{tm}, as well as a realisation of their full automorphism groups.

\begin{theo} \label{t2} Let $q \geqslant 5$ be a prime number and let $g=q-1.$ Set $\omega_l=\mbox{exp}(\tfrac{2\pi i}{l}).$

\s

If $S$ belongs to the closed family $\bar{\mathscr{C}}_g$ then $S$ is isomorphic to the normalisation of the singular affine algebraic curve $$\mathscr{X}_t : y^2=(x^{q}-1)(x^{q}-t) \,\, \mbox{ for some }t \in \mathbb{C}-\{0,1\}.$$
In addition, if $S \in \mathscr{C}_g$  then the full automorphism group of $S \cong \mathscr{X}_t$ is generated by  $$(x,y) \mapsto (\omega_{q}x, -y) \,\, \mbox{ and } \,\, (x,y) \mapsto (\sqrt[q]{t}\tfrac{1}{x}, \sqrt{t}\tfrac{y}{x^{q}}).$$

Assume $q \equiv 1 \mbox{ mod } 4$ and choose $\rho \in \{2, \ldots, q-2\}$ such that $\rho^4\equiv 1 \mbox{ mod } q.$  Then $X_4$ is isomorphic to the normalisation of the singular affine algebraic curve $$y^{q}=(x-1)(x-i)^{\rho}(x+1)^{q-1}(x+i)^{q-\rho}$$where $i^2=-1.$  In the previous model the full automorphism group of $X_4$ is generated by  $$(x,y) \mapsto (x, \omega_q y) \,\, \mbox{ and }\,\, (x,y) \mapsto (ix,\varphi(x)y^{\rho})$$where $\varphi(x)= \tfrac{-(x+i)^{e-\rho}}{(x-i)^{e-1}(x+1)^{\rho -1}}$ and $e = \tfrac{\rho^2+1}{q}.$

\s

$X_3$ isomorphic to the normalisation of the singular affine algebraic curve $$y^3=x^{q}-1$$ and, in this model, its full automorphism group is generated by $(x,y) \mapsto (\omega_{q}x, \omega_3y).$ 
 
\s

For each $k \in \{1, \ldots, \tfrac{q-3}{2}\}$ there exists $n_k \in \{1, \ldots, q-1\}$ different from $q-2$ such that $X_{2,k}$ is isomorphic to the normalisation of the singular affine algebraic curve $$y^q=x^{n_k}(x^2-1)$$ and, in this model, its full automorphism group is generated by $$(x,y) \mapsto (x, \omega_{q}y) \,\, \mbox{ and }\,\, (x,y) \mapsto (-x, (-1)^{n_k}y).$$

\s

If $S$ belongs to the closed family $\bar{\mathscr{K}}_g$,  then $S$ is isomorphic to the normalisation of the singular affine algebraic curve $$\mathscr{Z}_t : y^q=(x-1)(x+1)^{q-1}(x-t)(x+t)^{q-1}\,\, \mbox{ for some }t \in \mathbb{C}-\{0, \pm1\}$$and, if $S \neq X_4$ and $S \notin \bar{\mathscr{C}}_g$  then the full automorphism group of $S \cong \mathscr{Z}_t$ is generated by $$(x,y) \mapsto (x, \omega_q y) \,\, \mbox{ and }\,\, (x,y) \mapsto (-x, \phi_t(x)y^{-1})$$where $\phi_t(x)=(x^2-1)(x^2-t^2).$ 
\end{theo}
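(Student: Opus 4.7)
The plan is to realise each surface from Theorem \ref{tm} as a cyclic cover of $\mathbb{P}^1$ by descending to an appropriate cyclic subgroup $N$ of $\mbox{Aut}(S)$, reading the defining equation off of the induced signature, and then verifying the claimed generators of $\mbox{Aut}(S)$ by direct substitution in coordinates. The basic input is the classical correspondence: a $C_p$-Galois cover of $\mathbb{P}^1$ ($p$ prime) branched at distinct points $p_1,\dots,p_r$ with local monodromy exponents $n_1,\dots,n_r\in\{1,\dots,p-1\}$ satisfying $\sum_i n_i\equiv 0\mbox{ mod }p$ is the normalisation of $y^p=\prod_i(x-p_i)^{n_i}$, and is determined by $(p_i,n_i)$ modulo the $\mbox{PGL}_2(\mathbb{C})$-action on the $p_i$, rescaling of $y$, and the global scaling $(n_i)\mapsto(a\,n_i)$ by $a\in(\mathbb{Z}/p)^{\times}$ (which corresponds to $y\mapsto y^a$).

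For $\bar{\mathscr{C}}_g$ the natural choice is $N=\langle\iota\rangle$, generated by the hyperelliptic involution; the residual group $(\mathbf{D}_q\times C_2)/\langle\iota\rangle\cong\mathbf{D}_q$ acts on $\mathbb{P}^1=S/\iota$ and, after a Möbius normalisation, is generated by $x\mapsto\omega_q x$ and $x\mapsto\sqrt[q]{t}/x$. The generic $\mathbf{D}_q$-orbit on $\mathbb{P}^1$ has size $2q$ and is precisely the zero set of $(x^q-1)(x^q-t)$; identifying it with the $2q$ Weierstrass points yields $\mathscr{X}_t$, and the two displayed generators are the obvious lifts. For $\bar{\mathscr{K}}_g$ I take $N=C_q\subset\mathbf{D}_q$; Riemann--Hurwitz forces four branch points on $S/C_q$, the residual $\mathbf{D}_q/C_q\cong C_2$ action normalises them to $\{\pm 1,\pm t\}$, and dihedral equivariance pins down the exponent vector to $(1,q-1,1,q-1)$, giving $\mathscr{Z}_t$. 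The well-definedness of $(x,y)\mapsto(-x,\phi_t(x)/y)$ then follows from $(x^2-1)^q=(x-1)^q(x+1)^q$ (and its $t$-analogue), and the dihedral relation with the $C_q$-generator is a short direct computation.

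For $X_4$, $X_3$ and $X_{2,k}$ the signature lists only three elliptic periods on $\mathbb{P}^1$, but descending to $N=C_q$ still yields four branch points on $S/C_q$ by Riemann--Hurwitz, on which the residual $\mbox{Aut}(S)/C_q$ (of order $4$, $3$, $2$) acts. For $X_4$ the $C_4$ action is transitive, so after Möbius the branch set is $\{1,i,-1,-i\}$; $C_4$-equivariance of the exponents forces the scaling to be by some $\rho\in(\mathbb{Z}/q)^{\times}$ of order exactly $4$, i.e. $\rho^2\equiv -1\mbox{ mod }q$, yielding the exponents $(1,\rho,q-1,q-\rho)$. For $X_3$ the $C_3$ fixes one branch point and cycles three, producing $y^3=x^q-1$ after normalisation. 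For $X_{2,k}$ the $C_2$ fixes two branch points and swaps the remaining two, so the branch set normalises to $\{0,\pm 1,\infty\}$ with exponent vector $(n,1,1,q-n-2)$; the excluded value $n=q-2$ drops the genus (it kills the ramification at infinity), the Möbius $x\mapsto 1/x$ identifies $n$ with $q-2-n$, and the fixed point $n=q-1$ of this involution yields a curve lying in $\bar{\mathscr{K}}_g$ and is set aside, leaving exactly $(q-3)/2$ non-isomorphic surfaces.

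The step I expect to be most involved is the explicit construction of the $C_4$-generator for $X_4$: one must solve $\varphi(x)^q\,f(x)^\rho=f(ix)$, where $f$ denotes the right-hand side of the defining equation, which via the congruence $\rho^2+1=e\,q$ reduces to matching each of the four local exponents modulo $q$; one must additionally verify that the resulting transformation has order exactly $4$ (not $4q$) and, together with the diagonal $C_q$-automorphism, generates the full group $C_q\rtimes_4 C_4$ with the prescribed action.
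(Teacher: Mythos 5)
Your overall route is the same as the paper's: quotient by a cyclic subgroup of prime order, use the residual action on $\mathbb{P}^1$ to normalise the branch points and rotation numbers, invoke the classical cyclic-cover correspondence to write down the equation, and then verify the listed automorphisms in coordinates. For $\bar{\mathscr{C}}_g$, $X_4$, $X_3$ and $X_{2,k}$ your outline matches the paper's proof step for step (including the identification of the delicate point, namely checking that the lift $(x,y)\mapsto(ix,\varphi(x)y^{\rho})$ has order exactly four), and in those cases the exponent vector really is pinned down, because the residual group acts transitively on the branch points ($X_4$), or there is nothing to pin down (degree-$2$ covers), or the leftover freedom is exactly the discrete parameter $n_k$ that you carry along.

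There is, however, a genuine gap in the $\bar{\mathscr{K}}_g$ step, at the sentence ``dihedral equivariance pins down the exponent vector to $(1,q-1,1,q-1)$.'' Equivariance under the residual involution $x\mapsto -x$ only forces the exponents at the two points of each $C_2$-orbit to be negatives of each other modulo $q$: if $r$ has rotation number $\alpha$ at $P$ then it has rotation number $-\alpha$ at $sP$, since $s^{-1}rs=r^{-1}$. So the exponent vector is a priori $(m,q-m,m',q-m')$ at $(1,-1,t,-t)$, and after rescaling $y$ this is $(1,q-1,c,q-c)$ with $c=m'/m$; the class of $c$ in $\mathbb{Z}_q^{*}$ modulo sign and inversion is a topological invariant of the action, and it is exactly the invariant that separates the $\tfrac{q+3}{4}$ (resp.\ $\tfrac{q+1}{4}$) equisymmetric strata of $\bar{\mathscr{K}}_g$ produced by the classification: for the generating vector $\theta_i=(s,sr,r^i,r^{-i-1})$ one computes $c=-i(i+1)^{-1}$, and $c=\pm1^{\pm1}$ occurs only for $i=\tfrac{q-1}{2}$, which is precisely the class equivalent to $\Phi$, i.e.\ the stratum $\mathscr{C}_g$. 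Concretely, on $\mathscr{Z}_t$ the function $w=(x+1)(x+t)/y$ has divisor $P_{-1}+P_{-t}-P_1-P_t$ of degree $2$, so every $\mathscr{Z}_t$ is hyperelliptic and lies in $\bar{\mathscr{C}}_g$; it therefore cannot represent the non-hyperelliptic surfaces of $\mathscr{K}_g-\mathscr{C}_g$. (Alternatively: $\{\mathscr{Z}_t\}_t$ is an irreducible one-parameter family, so it cannot exhaust the several distinct one-dimensional strata of $\bar{\mathscr{K}}_g$.) The correct model for the stratum with invariant $c$ is $y^q=(x-1)(x+1)^{q-1}(x-t)^{c}(x+t)^{q-c}$, with the involution adjusted accordingly. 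You should be aware that the paper's own proof asserts the rotation numbers $1,q-1,1,q-1$ at this point without justification, so your proposal inherits rather than introduces the problem; but as written the step fails, and a complete argument must keep track of the extra exponent $c$.
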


The theorem above overlaps results obtained in  \cite[\S 11]{urzua}.

\subsection*{\it Hyperelliptic surfaces} Arakelian and Speziali in \cite{AS} studied groups of automorphisms of large prime order of (non-necessarily smooth)  
projective absolutely irreducible algebraic curves over algebraically closed fields of any characteristic. In terms of our terminology, in \cite[Theorem 4.7]{AS} they proved that if $q \geqslant 7$ is a prime number and $S$ is a compact Riemann surface of genus $q-1$ with a group of automorphisms of order $\lambda q$ then $$S \mbox{ is non-hyperelliptic}  \mbox{ implies }  \lambda \in \{1,2,3,4\}.$$

The following result lengthens  the implication above; it follows from Theorem \ref{tm}.
\begin{prop} Let $q \geqslant 7$ be a prime number.  The compact Riemann surfaces lying in $\mathscr{M}_{q-1}^q$ that are non-hyperelliptic are $X_{2,k}, X_3, X_4,$ the surfaces  which belong to  $\mathscr{K}_g-\mathscr{C}_g$ and the ones for which $\mbox{Aut}(S) \cong C_q.$
\end{prop}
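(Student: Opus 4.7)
The plan is to combine the full classification in Theorem~\ref{tm} with a case-by-case check of hyperellipticity. First I would observe that the surfaces omitted from the statement, namely those in $\bar{\mathscr{C}}_g$ and the Accola--Maclachlan curve $X_8$, are hyperelliptic: this is built into the definition of $\bar{\mathscr{C}}_g$ recalled in the introduction, and is visible from the equation $y^2 = x^{2(g+1)}-1$ of $X_8$. So the task reduces to verifying that every other surface appearing in Theorem~\ref{tm} is non-hyperelliptic.

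The key tool is the uniqueness of the hyperelliptic involution in genus $\geqslant 2$: if $S$ were hyperelliptic, such an involution would lie in the center of $\mbox{Aut}(S)$ and fix exactly $2g+2 = 2q$ points. The easy cases can then be dispatched by pure group theory. If $\mbox{Aut}(S) \cong C_q$, or if $S \cong X_3$ so that $\mbox{Aut}(S) \cong C_q \times C_3$, the group contains no element of order two. If $S \cong X_4$, the group $C_q \rtimes_4 C_4$ has trivial center because $C_4$ acts on $C_q$ by an automorphism of order four, so no nonidentity element of $C_4$ centralises $C_q$. Finally, if $S \in \mathscr{K}_g - \mathscr{C}_g$, by Theorem~\ref{tm} the full group is $\mathbf{D}_q$, which has trivial center for $q$ an odd prime. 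None of these groups admits a central involution, ruling out hyperellipticity.

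The main obstacle is $X_{2,k}$, whose full automorphism group $C_q \times C_2$ is abelian and does contain a (unique, hence central) involution $\tau$; here one cannot conclude by group theory alone and must count fixed points of $\tau$. I would read these off from the signature $(0; q, 2q, 2q)$: since a stabilizer of a point in $S$ is cyclic and must contain $C_2$ in order for $\tau$ to fix the point, only stabilizers of order divisible by $2$ contribute, and in $C_q \times C_2$ such a stabilizer must be the full group $C_{2q}$. Hence $\tau$ fixes exactly one point over each of the two branch points with stabilizer $C_{2q}$, giving two fixed points in total, which is far short of $2q$ (recall $q \geqslant 7$). Therefore $\tau$ is not a hyperelliptic involution; being the only involution in $\mbox{Aut}(X_{2,k})$, this forces $X_{2,k}$ to be non-hyperelliptic. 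Together with Theorem~\ref{tm} this exhausts all cases and establishes the proposition.
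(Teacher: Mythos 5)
Your proof is correct and is essentially the argument the paper intends: the paper merely asserts that the proposition ``follows from Theorem \ref{tm},'' and your case-by-case verification (hyperellipticity of $\bar{\mathscr{C}}_g \ni X_8$ from the explicit models; absence of a central involution in $C_q$, $C_{3q}$, $C_q \rtimes_4 C_4$ and $\mathbf{D}_q$, which rules out a hyperelliptic involution; and the fixed-point count showing the unique, central involution of $C_q \times C_2$ acting on $X_{2,k}$ fixes only $2 < 2q = 2g+2$ points) supplies exactly the details that are omitted. The only case requiring more than pure group theory, namely $X_{2,k}$, is handled correctly by reading the stabilizers off the signature $(0;q,2q,2q)$.
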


\subsection*{\it Jacobian variety} Let $S$ be a compact Riemann surface of genus $g \geqslant 2.$ We denote by $JS$ the Jacobian variety of $S$, that is, the quotient$$JS=\mathscr{H}^1(S, \mathbb{C})^*/H_1(S, \mathbb{Z}),$$where $\mathscr{H}^1(S, \mathbb{C})^*$ stands for the dual of the $g$-dimensional complex vector space of holomorphic forms of $S$ and $H_1(S, \mathbb{Z})$  stands for the first integral homology group of $S$. 

We emphasise the following two classical facts (see, for example, \cite{BL}):
\begin{enumerate}
\item $JS$ is an irreducible  principally polarised abelian variety of dimension $g,$ and
\item up to isomorphism, the surface is  determined by its Jacobian (Torelli's theorem).
\end{enumerate}

If $G$ is a group acting on  $S$ then $G$ also acts on $JS$ and this action, in turn, induces the so-called group algebra decomposition of $JS$. Concretely$$JS\sim  A_1 \times \cdots \times A_r \sim  B_1^{n_1} \times \cdots \times B_r^{n_r}$$
where the factors $A_j$ are pairwise non-$G$-isogenous abelian subvarieties of $JS$ uniquely determined, and in correspondence with central idempotents generating the simple algebras decomposing the rational group algebra  of $G.$  Each $A_j$ decomposes further as $B_j^{n_j}$ where the abelian subvarieties  $B_j$ are no longer unique and  are related to the decomposition of each simple algebra as a product of minimal left ideals.  The numbers $r$ and $n_j$ depend only on the algebraic structure of $G.$ See \cite{cr} and \cite{l-r}.
\s

The following result provides the group algebra  decomposition of the  Jacobian varieties of the 
 surfaces of Theorem \ref{tm}, with the exception of $X_3$ and $X_{2,k}$. In fact, the group algebra decomposition of $JX_3$ is trivial whilst the one of $JX_{2,k}$ agrees with the classical decomposition $$JX_{2,k} \sim J(X_{2,k}/H) \times \mbox{Prym}(X_{2,k} \to X_{2,k}/H)$$where $\mbox{Prym}$ stands for the Prym variety and $H \leqslant \mbox{Aut}(X_{2,k})$ is isomorphic to $C_2$.

\begin{theo} \label{t3}
Let $q \geqslant 5$ be a prime number and let $g=q-1.$ 

\s

The Jacobian variety $JX_8$  decomposes, up to isogeny, as the square power $$JX_8 \sim JY_8^2$$where $Y_8$ is quotient  compact Riemann surface given by the action of $\langle   z \rangle$ on $X_8,$ where $$\operatorname{Aut}(X_8) \cong \langle x,y,z: x^{2q}=y^2=z^2=1, [x,y]=[z,y]=1, zxz=x^{-1}y \rangle.$$ 

\s

The Jacobian variety $JX_4$ of $X_4$ decomposes, up to isogeny, as the fourth power $$JX_4 \sim JY_4^4$$where $Y_4$ is quotient  compact Riemann surface given by the action of $\langle B \rangle$ on $X_4,$ where  $$\operatorname{Aut}(X_4) \cong \langle A, B: A^{q}=B^4=1, BAB^{-1}=A^{\rho} \rangle$$and $\rho$ is a primitive fourth root of unity in $\mathbb{Z}_q.$ 

\s

The Jacobian variety $JS$ of $S \in \mathscr{K}_g$ decomposes, up to isogeny, as the square power $$JS \sim JX^2$$where $X$ is quotient  compact Riemann surface given by the action of $\langle  s \rangle$ on $S,$ where

\begin{displaymath}
\operatorname{Aut}(S) \cong  \left\{ \begin{array}{ll}
\hspace{0.5 cm} \mathbf{D}_{q} & \textrm{if $S \in \mathscr{K}_g-\mathscr{C}_g$}\\
 \mathbf{D}_{q} \times C_2 & \textrm{if $S \in \mathscr{C}_g$}
  \end{array} \right.
\end{displaymath}and $\mathbf{D}_{q}=\langle r,s : r^{q}=s^2=(sr)^2=1\rangle.$
\end{theo}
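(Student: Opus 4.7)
The plan is to apply the group algebra decomposition theorem of Carocca--Rodr\'iguez \cite{cr} and Lange--Recillas \cite{l-r}, combined with the explicit dimension formulas of Rojas, to compute the decomposition of $JS$ into $G$-isotypic factors indexed by the rational irreducible representations of $G=\operatorname{Aut}(S)$. In each of the three cases I would proceed in three steps: (i)~enumerate the rational irreducibles of $G$, (ii)~compute the dimension of each factor using the signatures furnished by Theorem \ref{tm}, and (iii)~identify the unique surviving factor with the Jacobian of the specified quotient.

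For step (i), I would describe the rational characters explicitly. In $\operatorname{Aut}(X_8)$ the non-trivial characters of $\langle x\rangle\cong C_{2q}$ fuse into Galois orbits and produce a rational irreducible of large dimension, besides the one-dimensional characters lifted from the abelianisation $G^{\operatorname{ab}}$. In $\operatorname{Aut}(X_4)\cong C_q\rtimes_4 C_4$ the non-trivial characters of $C_q$ form a single orbit of size $q-1$ under conjugation by a primitive fourth root of unity in $(\mathbb{Z}/q)^\times$ (which exists because $q\equiv 1\pmod 4$), giving one rational irreducible of dimension $q-1$ besides the four one-dimensional characters lifted from $C_4$. In $\mathbf{D}_q$ the rational irreducibles are the trivial, the sign, and a unique faithful one of dimension $q-1$; in $\mathbf{D}_q\times C_2$ these double via tensoring with the two characters of the $C_2$ factor.

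For step (ii), I would plug the signatures into Rojas's formula for the dimensions of the $B_\chi$'s. In each case the trivial factor vanishes because $S/G$ has genus $0$. The vanishing of every other ``small'' factor can be established either via the general formula or by computing the genus of the intermediate quotients $S/N$ for suitable subgroups $N\leqslant G$ using Riemann--Hurwitz and matching with $J(S/N)\sim\prod_\chi B_\chi^{\dim V_\chi^N/m_\chi}$. Only the large rational irreducible of dimension $q-1$ survives, producing a single non-zero isotypic factor whose multiplicity in $JS$ equals the claimed exponent $2$, $4$ or $2$.

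For step (iii), Riemann--Hurwitz applied to the quotient map $S\to S/H$, for $H$ equal to $\langle z\rangle$, $\langle B\rangle$ or $\langle s\rangle$, gives a quotient of genus $(q-1)/2$, $(q-1)/4$ or $(q-1)/2$ respectively, matching the dimension of the unique non-zero isotypic component. Since $J(S/H)$ is a $G$-invariant abelian subvariety of $JS$ contained in that component, it must coincide, up to isogeny, with it; this yields the claimed isogenies $JX_8\sim JY_8^2$, $JX_4\sim JY_4^4$ and $JS\sim JX^2$. The main obstacle is the vanishing of the small components in the non-abelian cases $\operatorname{Aut}(X_8)$ and $\operatorname{Aut}(X_4)$, which requires care in bookkeeping the fixed subspaces $V_\chi^{\langle\gamma_i\rangle}$ at each branch stabiliser; the dihedral case $\mathscr{K}_g$ is the most transparent, since $\mathbf{D}_q$ has only three rational irreducibles and the sign component vanishes via a direct Riemann--Hurwitz check showing that $S/\langle r\rangle$ has genus $0$.
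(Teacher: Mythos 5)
Your proposal follows essentially the same route as the paper: the group algebra decomposition with the rational irreducibles enumerated as you describe, the dimension formula of Rojas showing that only the degree-$(q-1)$ (resp.\ degree-$2(q-1)$ for $X_8$) isotypic factor is non-zero, and the induced isogeny $J(S/H)\sim\prod B_\chi^{d_\chi^H/s_\chi}$ to identify that factor with the Jacobian of the quotient. The only cosmetic differences are that the paper gets the vanishing of the small factors by computing the large factor's dimension and comparing with $g=q-1$, and identifies $J(S/H)$ by computing the fixed subspace $d_\chi^H$ directly rather than via a Riemann--Hurwitz genus count; both variants are sound.
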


\subsection*{\it Field of moduli and fields of definition} Let $\mbox{Gal}(\mathbb{C}/\mathbb{Q})$ denote the group of field automorphisms of $\mathbb{C}.$ The correspondence $$\mbox{Gal}(\mathbb{C}/\mathbb{Q}) \times \mathscr{M}_g \to \mathscr{M}_g \,\, \mbox{ given by }\,\, (\sigma, [S]) \mapsto [S^{\sigma}]$$where $S^{\sigma}$ is the Galois $\sigma$-transformed of $S$ (considered as algebraic curve) defines an action. 

 The field of moduli of a compact Riemann surface $S$ is the fixed field $\mathcal{M}(S)$ of the isotropy group of $S$ under the aforementioned action, namely $$\mathcal{M}(S)=\mbox{fix}\{\sigma \in \mbox{Gal}(\mathbb{C}/\mathbb{Q}): S^{\sigma} \cong S\}.$$
 
The field of moduli of $S$ agrees with the intersection of all its fields of definition and, as proved by Koizumi in \cite{koi}, $S$ can be defined over a finite degree extension of $\mathcal{M}(S).$ 
 
\s
 
Necessary and sufficient conditions under which $S$   can be defined over its field of moduli were provided by Weil in \cite{W1} (see also \cite{W2} for a constructive proof of Weil's theorem); these conditions are trivially satisfied if $S$ has no non-trivial automorphisms. Besides, as proved by Wolfart in \cite{JW}, if $S$ is quasiplatonic  then $S$ can be defined over its field of moduli. 

\s

The general question of deciding whether or not the field of moduli is a field of definition is a challenging problem; see, for example, \cite{Badr}, \cite{YoGa}, \cite{Hid}, \cite{FM}, \cite{Huggins}, \cite{Kont} and \cite{yomoduli}. In this direction, it is a known fact that if the genus of $S/\mbox{Aut}(S)$ is zero then either $S$ can be defined over $\mathcal{M}(S)$  or  over a quadratic extension of it; see \cite{DE} and also \cite{Hidu} for recent results.

\s

We now study the aforementioned problem for the compact Riemann surfaces of Theorem \ref{tm}. First, note that for the quasiplatonic ones the problem is trivial. Indeed:

\begin{enumerate}
\s
\item As proved in Theorem \ref{t2}, the surfaces $X_3, X_{2,k}$ and $X_8$ are defined over $\mathbb{Q}$ and therefore their fields of moduli are $\mathbb{Q}.$  
\s

\item As mentioned above, the fact that $X_4$ is quasiplatonic implies that it  can be defined over its field of moduli. Moreover,  the uniqueness of $X_4$ implies that its field of moduli is $\mathbb{Q}.$  In fact, we shall see later (Remark \ref{remax4} in \S\ref{sprooft2}) that $X_4$ is isomorphic to $$y^{q}=x(x+1)^{\rho}(x-1)^{q-\rho}$$
\end{enumerate}

The remaining cases (that is, the surfaces lying in the family $\mathscr{K}_g$ since it contains $\mathscr{C}_g$)  are given in the following proposition. 

\begin{prop} \label{moduli} Let $q \geqslant 5$ be prime  and let $g=q-1.$ If $S$ belongs to the family ${\mathscr{K}}_g$ and  $$S \cong \mathscr{Z}_t =\{ (x,y):  y^q=(x-1)(x+1)^{q-1}(x-t)(x+t)^{q-1}\}$$for $t \in \mathbb{C}-\{0, \pm 1\}$ then  the field of moduli of $S$ is $\mathbb{Q}(t)$.
\end{prop}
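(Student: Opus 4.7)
The containment $\mathcal{M}(S)\subseteq\mathbb{Q}(t)$ is immediate: since $\mathscr{Z}_t$ is cut out by a polynomial with coefficients in $\mathbb{Q}(t)$, the curve is defined over $\mathbb{Q}(t)$, and the field of moduli is contained in every field of definition. For the reverse containment, the plan is to take $\sigma\in\Gal(\mathbb{C}/\mathbb{Q})$ with $\mathscr{Z}_t^\sigma\cong\mathscr{Z}_t$ and prove $\sigma$ fixes $\mathbb{Q}(t)$ pointwise. Applying $\sigma$ to the coefficients of the defining equation gives $\mathscr{Z}_t^\sigma=\mathscr{Z}_{\sigma(t)}$, so the task reduces to classifying the parameters $s\in\mathbb{C}-\{0,\pm 1\}$ for which $\mathscr{Z}_s\cong\mathscr{Z}_t$, and showing this forces $s=t$.

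For the classification I would descend to the $\mathbf{D}_q$-quotient. By Theorem~\ref{tm}, the subgroup $\mathbf{D}_q\leqslant\Aut(\mathscr{Z}_t)$ displayed in Theorem~\ref{t2} is intrinsically characterised in the isomorphism class of $S$---it is the full automorphism group on $\mathscr{K}_g-\mathscr{C}_g$, and the unique copy of $\mathbf{D}_q$ acting with signature $(0;2,2,q,q)$ inside $\mathbf{D}_q\times C_2$ on $\mathscr{C}_g$---so any isomorphism $f\colon \mathscr{Z}_s\to\mathscr{Z}_t$ conjugates one $\mathbf{D}_q$ onto the other and descends to a biholomorphism $\bar f\colon \mathbb{P}^1_u\to\mathbb{P}^1_u$ of the quotients, where $u=x^2$ identifies $\mathscr{Z}_\bullet/\mathbf{D}_q$ with $\mathbb{P}^1$. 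The map $\bar f$ must carry the branch locus $\{0,\infty,1,s^2\}$ of $\mathscr{Z}_s\to\mathbb{P}^1_u$ onto $\{0,\infty,1,t^2\}$ of $\mathscr{Z}_t\to\mathbb{P}^1_u$, while preserving the partition of branch points into two of order $2$ and two of order $q$. A direct Möbius analysis then narrows $s$ to the finite list $\{\pm t,\pm t^{-1}\}$.

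The heart of the argument, and the step I expect to be the main obstacle, is to refine this list down to the single possibility $s=t$. Each candidate Möbius $\bar f$ has to be lifted to a putative isomorphism of $\mathbf{D}_q$-covers of $\mathbb{P}^1_u$, and such a lift exists if and only if the monodromy tuples $(c_1,c_2,c_3,c_4)\in\mathbf{D}_q^4$ attached to $\mathscr{Z}_s$ and $\mathscr{Z}_t$ (of ramification type $(2,2,q,q)$ with $c_1c_2c_3c_4=1$) lie in the same orbit under simultaneous $\mathbf{D}_q$-conjugation, under the outer action of $\Aut(\mathbf{D}_q)\cong\mathbb{Z}_q\rtimes\mathbb{Z}_q^\times$, and under the Hurwitz braid moves that permute branch points of like order. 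The monodromy datum of $\mathscr{Z}_t$ can be read off directly from the cyclic subcover $y^q=(x-1)(x+1)^{q-1}(x-t)(x+t)^{q-1}\to\mathbb{P}^1_x$: the exponents $1$ and $q-1$ determine which power of the generator $r\in C_q$ is realised as local monodromy at each of the ramification points $1,-1,t,-t$, and hence pin down the pair of order-$q$ monodromies on $\mathbb{P}^1_u$. Comparing the resulting tuples for the three ``spurious'' values $s\in\{-t,\pm t^{-1}\}$ with that of $\mathscr{Z}_t$, one checks that only $s=t$ yields an equivalent $\mathbf{D}_q$-cover, so $\sigma(t)=t$ and therefore $\sigma$ fixes $\mathbb{Q}(t)$ pointwise, completing the proof.
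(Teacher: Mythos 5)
Your reduction is essentially the paper's: both arguments use the (essential) uniqueness of the $\mathbf{D}_q$-action to descend an isomorphism $\mathscr{Z}_s\cong\mathscr{Z}_t$ to a M\"{o}bius transformation of $\mathbb{P}^1_u$ preserving $\{0,\infty\}$ and carrying $\{1,t^2\}$ to $\{1,s^2\}$, whence $s^2=t^2$ or $(st)^2=1$, i.e.\ $s\in\{\pm t,\pm t^{-1}\}$. The genuine gap is precisely the step you defer with ``one checks'': the spurious candidates cannot all be eliminated, because $\mathscr{Z}_{t^{-1}}$ \emph{is} isomorphic to $\mathscr{Z}_t$. Indeed,
$$(x^{-1}-1)(x^{-1}+1)^{q-1}(x^{-1}-t^{-1})(x^{-1}+t^{-1})^{q-1}=\tfrac{1}{t^{q}x^{2q}}\,(x-1)(x+1)^{q-1}(x-t)(x+t)^{q-1},$$
so $(x,y)\mapsto (x^{-1},\,t^{-1}x^{-2}y)$ is an isomorphism $\mathscr{Z}_t\to\mathscr{Z}_{t^{-1}}$ normalising the $C_q$-actions and covering $u\mapsto u^{-1}$ on $\mathbb{P}^1_u$; hence the two $\mathbf{D}_q$-monodromy tuples are Hurwitz-equivalent and your proposed comparison cannot single out $s=t$. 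Taking $t$ transcendental and $\sigma\in\Gal(\mathbb{C}/\mathbb{Q})$ with $\sigma(t)=t^{-1}$ then gives $S^{\sigma}\cong S$ while $\sigma$ does not fix $\mathbb{Q}(t)$, so the isotropy group of $S$ is strictly larger than $\Gal(\mathbb{C}/\mathbb{Q}(t))$ and its fixed field is contained in $\mathbb{Q}(t+t^{-1})$.

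You are in good company: the paper's own proof reaches the same dichotomy and disposes of the case $t\sigma(t)=\pm 1$ with the unexplained words ``a contradiction''; the isomorphism above shows that no such contradiction is available, at least when $t\sigma(t)=1$. So your proposal faithfully reproduces the published strategy, but the step you correctly identify as the heart of the argument is supplied by neither, and as described it would fail rather than merely remain unverified. A complete treatment would have to re-examine the model itself --- note that $(x,y)\mapsto(t/x,\,tx^{-2}y)$ is an involution of $\mathscr{Z}_t$ not lying in the displayed $\mathbf{D}_q$, which already casts doubt on whether the exponent pattern $(1,q-1,1,q-1)$ represents the strata of $\mathscr{K}_g-\mathscr{C}_g$ --- or else adjust the parameter (e.g.\ to $t+t^{-1}$) in the statement of the proposition.
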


It it worth mentioning that a compact Riemann surface and its Jacobian variety can be defined over the same fields and that their fields of moduli agree; see \cite{ski} and also \cite{Milne}. 

The following result is a direct consequence of the above. 

\begin{coro}
The compact Riemann surfaces of Theorem \ref{tm} and their Jacobian varieties can be defined over their fields of moduli.
\end{coro}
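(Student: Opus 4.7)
The plan is to assemble the corollary directly from the algebraic models produced in Theorem \ref{t2}, the computation of fields of moduli in Proposition \ref{moduli}, and the already-quoted facts about quasiplatonic surfaces and the Torelli correspondence. The statement has two halves (surfaces and Jacobians), and each splits by case according to the classification in Theorem \ref{tm}.

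For the surfaces themselves, I would go case by case. The surfaces $X_8$, $X_3$ and each $X_{2,k}$ appear in Theorem \ref{t2} with defining equations whose coefficients lie in $\mathbb{Q}$ (respectively $y^2=x^{2(g+1)}-1$, $y^3=x^q-1$, $y^q=x^{n_k}(x^2-1)$), so they are automatically defined over their fields of moduli, which are contained in $\mathbb{Q}$. For $X_4$, I would invoke Wolfart's theorem \cite{JW} together with the uniqueness statement in Theorem \ref{tm}: since $X_4$ is quasiplatonic it descends to $\mathcal{M}(X_4)$, and uniqueness forces $X_4^\sigma\cong X_4$ for every $\sigma\in\mathrm{Gal}(\mathbb{C}/\mathbb{Q})$, hence $\mathcal{M}(X_4)=\mathbb{Q}$. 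The only genuinely non-trivial case is $S\in\mathscr{K}_g$: Theorem \ref{t2} realises such $S$ as the normalisation of $\mathscr{Z}_t$, whose coefficients lie in $\mathbb{Q}(t)$, and Proposition \ref{moduli} identifies $\mathcal{M}(S)=\mathbb{Q}(t)$, so $S$ is defined over its field of moduli. Since $\bar{\mathscr{C}}_g\subset\bar{\mathscr{K}}_g$ and the boundary points $X_4,X_8$ are already handled, this covers every surface listed in Theorem \ref{tm}.

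For the Jacobian side, I would simply quote the result recalled in the paragraph preceding the corollary: by \cite{ski} and \cite{Milne}, a compact Riemann surface and its Jacobian variety have the same fields of definition and the same field of moduli. Hence any defining field exhibited above for $S$ is also a defining field for $JS$, and in each case it coincides with $\mathcal{M}(S)=\mathcal{M}(JS)$.

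There is no real obstacle here; the corollary is essentially a bookkeeping statement that collects Theorem \ref{t2}, Proposition \ref{moduli}, Wolfart's theorem and the Serre--Milne-type transfer between curves and Jacobians. The only point requiring a small argument, rather than a direct citation, is the identification $\mathcal{M}(X_4)=\mathbb{Q}$, which follows cleanly from the Galois-invariance of the isomorphism class of $X_4$ forced by its uniqueness.
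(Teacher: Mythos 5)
Your proposal is correct and matches the paper's argument: the corollary is presented there precisely as a direct consequence of the rational models in Theorem \ref{t2}, Proposition \ref{moduli}, Wolfart's theorem for the quasiplatonic case $X_4$, and the transfer of fields of definition and moduli between a curve and its Jacobian via \cite{ski} and \cite{Milne}. Your only added detail, the explicit Galois-invariance argument showing $\mathcal{M}(X_4)=\mathbb{Q}$ from uniqueness, is exactly the observation the paper makes in the itemised discussion preceding the corollary.
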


\subsection*{\it The sublocus of $\mathcal{A}_g$ with $G$-action} It is well-known that the moduli space $\mathcal{A}_g$ of principally polarised abelian varieties  of dimension $g$ is isomorphic to the quotient $$ \pi : \mathscr{H}_g \to \mathcal{A}_g  \cong \mathscr{H}_g/\Sp(2g, \mathbb{Z})$$of the Siegel upper half-space $\mathscr{H}_g$ by the action of the symplectic group $\Sp(2g, \mathbb{Z})$. If  the isomorphism class of $JS$ is represented by $Z_S \in \mathscr{H}_g$ then there is an isomorphism of groups $$\mbox{Aut}(JS) \cong \Sigma_S:=\{ R \in \Sp(2g, \mathbb{Z}) : R \cdot Z_S=Z_S\},$$where $\Sigma_S$ is well-defined up to conjugation in $\Sp(2g, \mathbb{Z}).$ The subset of $\mathscr{H}_g$ given by $$\mathscr{S}_S:=\{Z\in \mathscr{H}_g: R \cdot Z=Z \text{ for all } R \in \Sigma_S\}$$consists of those matrices representing principally polarised abelian varieties of dimension $g$ admitting an action  which is equivalent to the one of $\mbox{Aut}(JS).$  This subset is, indeed, an analytic submanifold of $\mathscr{H}_g$  closely related with some special subvarieties and Shimura families of $\mathcal{A}_g$.

Observe that if $\bar{\mathscr{U}}_g$ is an equisymmetric  family of compact Riemann surfaces of genus $g$ and if $S$ is any surface lying in the interior  $\mathscr{U}_g$  of $\bar{\mathscr{U}}_g$ then $$ \{JX : X \in \mathscr{U}_g \} \subseteq  \pi (\mathscr{S}_S).$$

In general, those loci of $\mathcal{A}_g$ do not agree. Nonetheless, the uncommon cases in which these dimensions do agree have been useful in  finding Jacobians with complex multiplication. 
 
Although a satisfactory description of the matrices in $\mathscr{S}_S$ seems to be a difficult problem, as we shall see in \S\ref{ff2}, there is a simple representation theoretic way to compute the dimension of  the (component which contains $JS$ of) $\mathscr{S}_S$. We shall denote the aforementioned dimension by $N_S.$

\begin{theo} \label{t4}
Let $q \geqslant 5$ be prime,  let $g=q-1$ and let $S \in \mathscr{K}_g.$ Then $$N_{X_8}=N_{X_3}=N_{X_{2,k}}=0, \,\, N_{X_4}=\tfrac{q-1}{4} \,\, \mbox{ and } \,\, N_{S}=\tfrac{q-1}{2}.$$\end{theo}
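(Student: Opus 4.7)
The plan rests on the identification, to be derived in Section \ref{ff2}, that
\[
N_S \;=\; \dim_{\mathbb{C}}\,(\operatorname{Sym}^2 V)^{G},
\]
where $G=\Aut(JS)$ and $V=H^0(S,\Omega^1)$ is its analytic representation. This follows because $\operatorname{Sym}^2 V$ is the holomorphic tangent space to $\mathscr{H}_g$ at $Z_S$ and the component of $\mathscr{S}_S$ through $Z_S$ is the $G$-fixed locus. Since the central $-1\in\Aut(JS)$ acts by $-1$ on $V$ and hence trivially on $\operatorname{Sym}^2 V$, one may equivalently take $G=\Aut(S)$ modulo its $-1$. In practice I would determine $\chi_V$ via the Chevalley--Weil formula
\[
m_\rho=(g_0-1)\dim\rho+\dim\rho^{G}+\sum_{i=1}^{r}\sum_{l=1}^{m_i-1}\frac{l}{m_i}\,\dim V_{\rho(x_i),\,\zeta_{m_i}^l},
\]
applied to the signatures $(0;2,4,2q)$, $(0;3,q,3q)$, $(0;q,2q,2q)$, $(0;4,4,q)$ and $(0;2,2,q,q)$ deduced by Riemann--Hurwitz from Theorems \ref{tm} and \ref{t3}, and then evaluate
\[
N_S=\sum_{\rho}\!\Bigl[\tbinom{m_\rho+1}{2}s_\rho+\tbinom{m_\rho}{2}a_\rho\Bigr]+\sum_{\rho<\rho'}\!m_\rho\,m_{\rho'}\,\dim\Hom_G(\rho,{\rho'}^{*}),
\]
where $s_\rho=\dim(\operatorname{Sym}^2\rho)^G$ and $a_\rho=\dim(\wedge^2\rho)^G$ are dictated by the Frobenius--Schur indicator.

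For the abelian cases $X_3$ and $X_{2,k}$ the irreducibles are one-dimensional; the Chevalley--Weil formula produces $m_\chi+m_{\chi^{-1}}=1$ for every non-trivial $\chi$ together with $m_\chi=0$ whenever $\chi^2=\mathbf{1}$, so no monomial of $\operatorname{Sym}^2 V$ is trivial and $N_{X_3}=N_{X_{2,k}}=0$. For $X_4$ with $G=C_q\rtimes_4C_4$, inducing the non-trivial characters of $C_q$ along their $\langle\rho\rangle$-orbits yields $(q-1)/4$ faithful four-dimensional irreducibles $W_a$; because $\rho^2\equiv -1\pmod q$ each orbit is stable under negation, so every $W_a$ is real and $(\operatorname{Sym}^2 W_a)^G=\mathbb{C}$. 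The Chevalley--Weil formula forces $m_{W_a}=1$ for each orbit and $m_\eta=0$ for each linear character $\eta$, while Schur's lemma kills the cross terms, producing $N_{X_4}=(q-1)/4$. An analogous analysis for $\Aut(X_8)$, organised via the central extension $1\to\langle y\rangle\to\Aut(X_8)\to\mathbf{D}_{2q}\to 1$ and split according to the action of $y$, yields $N_{X_8}=0$. Finally, for $G=\mathbf{D}_q$ acting on $S\in\mathscr{K}_g$ with signature $(0;2,2,q,q)$, the Chevalley--Weil contributions $\tfrac{1}{2}+\tfrac{1}{2}+1+1-2=1$ show that each two-dimensional irreducible $\chi_j$ appears in $V$ with multiplicity exactly $1$ and that $m_{\mathbf{1}}=m_\epsilon=0$; every $\chi_j$ is real with $(\operatorname{Sym}^2\chi_j)^G=\mathbb{C}$ and $\dim\Hom_{\mathbf{D}_q}(\chi_i,\chi_j)=\delta_{ij}$, so the formula collapses to $\sum_{j=1}^{(q-1)/2}1=\frac{q-1}{2}$.

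The main technical hurdle will be the Chevalley--Weil tabulation for $\Aut(X_8)$: enumerating its irreducibles requires splitting them by the character of the central generator $y$ and matching the eigenvalue data of the order-$2$, order-$4$ and order-$2q$ triangle generators in each case. Once this tabulation is carried out, the verification that every symmetric-square invariant cancels is routine character bookkeeping, and the five identities of the theorem follow at once.
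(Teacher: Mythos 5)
Your starting point coincides with the paper's: identify $N_S$ with $\dim(\operatorname{Sym}^2\rho_a)^{G}=\langle\chi_{\rho_a}^{\mathrm{sym}}\,|\,1\rangle_G$, determine $\rho_a$ by Chevalley--Weil, and then count invariants. Where you diverge is in how the invariant count is organised: you decompose $\operatorname{Sym}^2 V$ according to the multiplicities $m_\rho$ and use Frobenius--Schur indicators ($s_\rho$, $a_\rho$) plus Schur's lemma for the cross terms, whereas the paper avoids computing indicators by a character-sum identity ($\Sigma\,\chi^{\mathrm{sym}}=\tfrac12[\Sigma(\chi+\bar\chi)^{\mathrm{sym}}-|H|\langle\chi|\chi\rangle]$, their Lemma on symmetric squares) applied to $\chi_{\rho_a}+\bar\chi_{\rho_a}=\chi_{\rho_r}$, which is rational-valued and easy to tabulate. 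Your bookkeeping is a perfectly legitimate variant and, for the abelian cases and for $X_4$ and $\mathbf{D}_q$, your computations check out (in particular $\phi_j\cong\phi_j^{*}$ and orthogonal because $\rho^2\equiv-1$, giving exactly $\tfrac{q-1}{4}$ invariants for $X_4$).

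Two points, however, are genuinely incomplete. First, the $X_8$ case is only announced, not carried out, and you have made it harder than necessary: rather than tabulating the irreducibles of the order-$8q$ group $\Aut(X_8)$ via the central extension, the paper uses the monotonicity $N_{S,H_2}\leqslant N_{S,H_1}$ for $H_1\leqslant H_2$ and computes with the cyclic subgroup $H=\langle x\rangle\cong C_{2q}$ alone. There $\rho_a|_H\cong\oplus_{j=q+1}^{2q-1}\rho_j$ with $\rho_j(x)=\omega_{2q}^{\,j}$, and since $j+j'\equiv 0\ (\mathrm{mod}\ 2q)$ is impossible for $j,j'\in\{q+1,\dots,2q-1\}$, no monomial of $\operatorname{Sym}^2 V$ is trivial and $N_{X_8,H}=0$, hence $N_{X_8}=0$. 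You should either adopt this reduction or actually complete the tabulation you defer.

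Second, your treatment of $\mathscr{K}_g$ only uses $G=\mathbf{D}_q$, but $\mathscr{K}_g$ contains $\mathscr{C}_g$, where the full automorphism group is $\mathbf{D}_{2q}=\mathbf{D}_q\times C_2$. Since $N_S$ is defined with respect to $\Aut(S)$, your computation gives only the inequality $N_S\leqslant N_{S,\mathbf{D}_q}=\tfrac{q-1}{2}$ for $S\in\mathscr{C}_g$; to obtain equality one must redo the invariant count for the degree-two representation $\psi$ of $\mathbf{D}_{2q}$ and its $\tfrac{q-1}{2}$ Galois conjugates (signature $(0;2,2,2,q)$, ske $(R^q,T,TR,R^{q-1})$), as the paper does, again finding $\tfrac{q-1}{2}$. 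Both repairs are routine, but as written the argument does not yet establish the $X_8$ identity nor the value of $N_S$ on all of $\mathscr{K}_g$.
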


According to results due to Streit in \cite{ST} (and later generalised in \cite{paola} for higher dimension), if $N_S$ equals zero then the full automorphism group of $S$ determines the period matrix for $JS$ and  $JS$ admits complex multiplication. We refer to \cite{Mu} and \cite{Shaska} for recent applications of this result for quasiplatonic curves that are  hyperelliptic and superelliptic.

As a direct consequence of the previous theorem we recover the following known result.

\begin{coro}
The Jacobian varieties of  $X_3, X_{2,k}$ and $X_8$ admit complex multiplication. 
\end{coro}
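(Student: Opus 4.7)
The plan is immediate: combine Theorem \ref{t4} with the theorem of Streit recalled in the paragraph preceding the corollary.

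First, Theorem \ref{t4} provides the vanishing $N_{X_8} = N_{X_3} = N_{X_{2,k}} = 0$. Geometrically this asserts that, for each of these surfaces $S$, the component of the fixed locus $\mathscr{S}_S \subset \mathscr{H}_g$ containing the period point $Z_S$ is zero-dimensional; equivalently, $Z_S$ is an isolated fixed point of the action of $\Sigma_S$ on $\mathscr{H}_g$. In particular, the $\mbox{Sp}(2g,\Z)$-orbit of $Z_S$ is rigid among principally polarised abelian varieties equipped with the given action.

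Second, I would invoke Streit's theorem directly. It asserts that whenever $N_S = 0$ the full automorphism group $\mbox{Aut}(S)$ pins down the period matrix of $JS$ up to symplectic conjugation, and the resulting abelian variety admits complex multiplication. Specialising this conclusion to $S = X_3$, $S = X_{2,k}$ and $S = X_8$ in turn yields the three statements of the corollary simultaneously.

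There is no genuine obstacle to this argument once Theorem \ref{t4} has been established: the corollary is simply a specialisation of Streit's general principle to the three cases in which $N_S$ was computed to vanish. The only minor bookkeeping point is that Streit's result applies to each $S$ in the stated list — a hypothesis met automatically since all three surfaces are quasiplatonic (their signatures, $(0;3,q,3q)$ for $X_3$ and $(0;q,2q,2q)$ for $X_{2,k}$ recorded in Theorem \ref{tm}, are triangular, and $X_8$ is classically quasiplatonic), so the representation-theoretic setup underlying Streit's theorem applies without modification.
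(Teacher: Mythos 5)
Your proposal is correct and follows exactly the paper's route: the corollary is obtained by combining the vanishing $N_{X_8}=N_{X_3}=N_{X_{2,k}}=0$ from Theorem \ref{t4} with Streit's result that $N_S=0$ forces $JS$ to admit complex multiplication. Your extra remark about quasiplatonicity is harmless but not needed, since the criterion as invoked in the paper requires only $N_S=0$.
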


In spite of the fact that the problem of determining the period matrix of a given Jacobian variety is, in general, intractable,  interesting results have been obtained for some famous Riemann surfaces. For instance,  the period matrices of the Macbeath's curve of genus seven and of the Bring's curve were determined in \cite{BT}  and  \cite{RR} respectively. A method to find the period matrices of the  Accola-Maclachlan and Kulkarni surfaces was given in \cite{periodos}. In addition, in \cite[Example 3.7]{periodos} the authors went even further and employed their method to provide the period matrix of the 
 Accola-Maclachlan curve of genus two in an explicit way.  
 
 At the end of the paper we  determine explicitly the period matrix of the Accola-Maclachlan curve of genus  four.

\s

This article is organised as follows. In  \S\ref{prelimi} we  succinctly review the basic preliminaries: Fuchsian groups and group action on Riemann surfaces and abelian varieties.  The proof of Theorem \ref{tm} is given in  \S\ref{ptm} and the proofs of Theorem \ref{t2} and Proposition \ref{moduli} are given in  \S\ref{sprooft2}. In  \S\ref{lemmatas} we prove some basic algebraic lemmata needed to prove, in  \S\ref{pt3t4},  Theorems \ref{t3} and \ref{t4}. Finally, we include an addendum in which the period matrix of the Accola-Maclachlan curve of genus  four is computed.

\section{Preliminaries} \label{prelimi}
\subsection{Fuchsian groups} \label{lpi} A {\it Fuchsian group} is a  discrete group of automorphisms of the  upper half-plane $\mathbb{H}.$ If $\Delta$ is a Fuchsian group and the orbit space $\mathbb{H}/{\Delta}$ given by the action of $\Delta$ on $\mathbb{H}$ is  compact, then the algebraic structure of $\Delta$ is determined by its {\it signature}:\begin{equation} \label{sig} \sigma(\Delta)=(\gamma; k_1, \ldots, k_s),\end{equation}where  $\gamma$ is the genus of  $\mathbb{H}/{\Delta}$ and $k_1, \ldots, k_s$ are the branch indices in the universal canonical projection $\mathbb{H} \to \mathbb{H}/{\Delta}.$ In this case, $\Delta$ has a canonical presentation in terms of {\it canonical generators} $\alpha_1, \ldots, \alpha_{\gamma}$, $\beta_1, \ldots, \beta_{\gamma},$ $ x_1, \ldots , x_s$ and relations
\begin{equation}\label{prese}x_1^{k_1}=\cdots =x_s^{k_s}=\Pi_{i=1}^{\gamma}[\alpha_i, \beta_i] \Pi_{i=1}^s x_i=1,\end{equation}where the brackets stand for the commutator. The Teichm\"{u}ller space of $\Delta$ is a complex analytic manifold homeomorphic to the complex ball of dimension $3\gamma-3+s$.

Let $\Delta'$ be a group of automorphisms of $\mathbb{H}$ such that $\Delta \leqslant \Delta'$ of  finite index. Then $\Delta'$ is also Fuchsian and they are related by the so-called Riemann-Hurwitz formula $$ 2\gamma-2 + \Sigma_{i=1}^s(1-\tfrac{1}{k_i})= [\Delta' : \Delta] \cdot  [2\gamma'-2 + \Sigma_{i=1}^r(1-\tfrac{1}{k_i'})].$$where $\sigma(\Delta')=(\gamma'; k_1', \ldots, k_r').$

\subsection{Group action on Riemann surfaces} \label{lpi2} Let $S$ be a compact Riemann surface of genus $g \geqslant 2.$  A finite group $G$ acts on $S$ if there is a group monomorphism $\epsilon: G\to \Aut(S).$ The orbit space $S/G$ given by the action of $G \cong \epsilon(G)$ on $S$ inherits naturally a  Riemann surface structure such that the canonical projection $S \to S/G$ is holomorphic. 

\s

By the classical uniformisation theorem, there is a unique, up to conjugation, Fuchsian group $\Gamma$ of signature $(g; -)$ such that $S \cong \mathbb{H}/{\Gamma}.$ Moreover, $G$ acts on $S$ if and only if there is a Fuchsian group $\Delta$ containing $\Gamma$ together with a group  epimorphism \begin{equation}\label{episs}\theta: \Delta \to G \, \, \mbox{ such that }  \, \, \mbox{ker}(\theta)=\Gamma.\end{equation}

It is said that $G$ acts on $S$ with signature $\sigma(\Delta)$ and that the action is represented by the {\it surface-kernel epimorphism} \eqref{episs}; henceforth, we write {\it ske} for short. Abusing notation, we shall also identify $\theta$ with the tuple of the images of the  
canonical generators of $\Delta.$
\subsection{Extending actions} Assume that $G'$ is a finite group such that $G \leqslant G'.$ The action of $G$ on $S$ represented by the ske \eqref{episs} is said to {\it extend} to an action of $G'$ on $S$ if:\begin{enumerate}
\item there is a Fuchsian group $\Delta'$ containing $\Delta,$ 
\item the Teichm\"{u}ller spaces of $\Delta$ and $\Delta'$ have the same dimension, and
\item there exists a ske  $$\Theta: \Delta' \to G' \, \, \mbox{ in such a way that }  \, \, \Theta|_{\Delta}=\theta \mbox{ and } \mbox{ker}(\theta)=\mbox{ker}(\Theta).$$
\end{enumerate} 

An action is called {\it maximal} if it cannot be extended in the previous sense. Singerman in \cite{singerman2} determined the complete list of pairs of signatures of Fuchsian groups $\Delta$ and $\Delta'$ for which it may be possible to have an extension as before. See also \cite{yoibero} and \cite{singerman1}.

\subsection{Equivalence of actions} \label{vecinos}
Two actions $\epsilon_i: G \to \mbox{Aut}(S)$  are  {\it topologically equivalent} if there exist $\omega \in \Aut(G)$ and an orientation preserving self-homeomorphism $f$ of $S$ such that
\begin{equation}\label{equivalentactions}
\epsilon_2(g) = f \epsilon_1(\omega(g)) f^{-1} \hspace{0.5 cm} \mbox{for all} \,\, g\in G.
\end{equation}

Each $f$ satisfying \eqref{equivalentactions} yields an automorphism $f^*$ of $\Delta$ where $\mathbb{H}/{\Delta} \cong S/G$. If $\mathscr{B}$ is the subgroup of $\mbox{Aut}(\Delta)$ consisting of them, then $\mbox{Aut}(G) \times \mathscr{B}$ acts on the set of skes defining actions of $G$ on $S$ with signature $\sigma(\Delta)$ by $$((\omega, f^*), \theta) \mapsto \omega \circ \theta \circ (f^*)^{-1}.$$  

Two skes $\theta_1, \theta_2 : \Delta \to G$ define topologically equivalent actions if and only if they belong to the same $(\mbox{Aut}(G) \times \mathscr{B})$-orbit; see, for example, \cite{Brou}. If the genus of $S/G$ is zero then $\mathscr{B}$ is generated by the so-called {\it braid transformations}  $\Phi_{i}$, for $1 \leqslant i  < l,$ defined by \begin{equation*} \label{braid} x_i \mapsto x_{i+1}, \hspace{0.3 cm}x_{i+1} \mapsto x_{i+1}^{-1}x_{i}x_{i+1} \hspace{0.3 cm} \mbox{ and }\hspace{0.3 cm} x_j \mapsto x_j \mbox{ when }j \neq i, i+1.\end{equation*}

\subsection{Equisymmetric stratification of $\mathscr{M}_g$} Following \cite{b}, the singular locus of  $\mathscr{M}_g$ admits an  equisymmetric stratification where 
each equisymmetric stratum, if nonempty, corresponds to one topological class of maximal actions (see also \cite{H}). More precisely:
$$\mbox{Sing}(\mathscr{M}_g)= \cup_{G, \theta} \bar{\mathscr{M}}_g^{G, \theta}$$where the {\it equisymmetric stratum} ${\mathscr{M}}_g^{G, \theta}$ consists of surfaces of genus $g$ with full automorphism group isomorphic to $G$ such that the action is topologically equivalent to $\theta$. In addition, the  {\it closure} $\bar{\mathscr{M}}_g^{G, \theta}$ of  ${\mathscr{M}}_g^{G, \theta}$ is a closed irreducible algebraic subvariety of $\mathscr{M}_g$ and consists of surfaces  of genus $g$ with a group of automorphisms isomorphic to $G$ such that the action is  topologically equivalent to $\theta$.

\s

 The subset $\bar{\mathcal{F}}_g(G, \sigma)=\bar{\mathcal{F}}_g$ of $\mathscr{M}_g$ of  all those  compact Riemann $S$ surfaces of genus $g$  with a group of automorphisms isomorphic to a given group $G$ acting with a given signature $\sigma$ will be called a {\it closed family}. Observe that if the signature of the action of $G$ on $S$ is \eqref{sig} then $$\dim (\bar{\mathcal{F}}_g)=3\gamma-3+s.$$

Assume that the action of $G$ is maximal. Then
\begin{enumerate}
\item the {\it interior} $\mathcal{F}_g$ of $\bar{\mathcal{F}}_g$ consists of those surfaces $S$ such that $G= \mbox{Aut}(S),$
\item $\mathcal{F}_g$ is formed by finitely many equisymmetric strata that are in correspondence with the pairwise non-equivalent topological actions of $G,$ and  \item  the set $\bar{\mathcal{F}}_g-\mathcal{F}_g$  is formed by  those surfaces $S$ such that $G <   \mbox{Aut}(S)$ properly. 

\end{enumerate}

\subsection{Abelian varieties}  A complex {\it abelian variety} is a complex torus which is also a complex projective algebraic variety. Each abelian variety $X=V/\Lambda$ admits a polarisation,  that is, a non-degenerate real alternating form $\Theta$ on $V$ such that for all $v,w \in V$$$\Theta(iv, iw)=\Theta(v,w) \,\,  \mbox{ and } \,\, \Theta(\Lambda \times \Lambda) \subset \mathbb{Z}.$$

If each elementary divisor of $\Theta|_{\Lambda \times \Lambda}$ equals 1 then
$\Theta$ is called {\it principal} and  $X$ is called a
 {\it principally polarised abelian variety}; from now on, we write  {\it ppav} for short. In this case,  there exists a basis for $\Lambda$ such that the matrix for $\Theta_{\Lambda \times \Lambda}$ with respect to it is given by \begin{equation}\label{simpl}
J = \left( \begin{smallmatrix}
0 & I_g \\
-I_g & 0
\end{smallmatrix} \right) \,\, \mbox{ where } \,\, g= \dim(X);
\end{equation}such a basis  is called {\it symplectic}. In addition, there exist a basis for $V$ with respect to which the period matrix for $X$ is $$\Pi=(I_g \, Z) \,\, \mbox{ where } \,\, Z  \in \mathscr{H}_g=\{ Z \in \mbox{M}(g, \mathbb{C}) : Z = Z^t , \, \mbox{Im}(Z) >0\},$$with $Z^t$ denoting the transpose matrix of $Z.$ The space $\mathscr{H}_g$ is called  Siegel upper half-space.

By an isomorphism of ppavs we mean an isomorphism of the underlying complex tori  preserving the involved polarisations. In other words, if $(I_{g} \, Z_i)$ is the period matrix of $X_i$ then an isomorphism $X_1 \to X_2$ is given by invertible matrices \begin{equation}\label{ig}M \in \mbox{GL}(g, \mathbb{C}) \,\, \mbox{ and } \,\, R \in \mbox{GL}(2g, \mathbb{Z}) \,\, \mbox{ such that } \,\,  M(I_{g} \, Z_1)=(I_{g} \, Z_2)R.\end{equation}

Since $R$ preserves the polarisation \eqref{simpl}, it belongs to the {\it symplectic group} $$\mbox{Sp}(2g, \mathbb{Z})=\{ R \in  \mbox{M}(2g, \mathbb{Z}) :  R^tJ R=J  \}. $$

It follows from \eqref{ig} that the correspondence $\mbox{Sp}(2g, \mathbb{Z}) \times \mathscr{H}_g \to \mathscr{H}_g$ given by
\begin{equation*}       (R=  \left( \begin{smallmatrix}
A & B \\
C & D
\end{smallmatrix} \right) , Z ) \mapsto R \cdot Z := (A+ZC)^{-1}(B+ZD)
\end{equation*}defines an action that identifies period matrices representing isomorphic ppavs. Hence $$\mathscr{H}_g \to \mathcal{A}_g:=\mathscr{H}_g/ \mbox{Sp}(2g, \mathbb{Z})$$is the moduli space of isomorphism classes of ppavs of dimension $g.$ See \cite{oort}.

\subsection{Abelian varieties with $G$-action} \label{ff2} Let $S$ be a compact Riemann surface of genus $g \geqslant 2.$ Consider the Jacobian variety $JS$ and its full (polarisation-preserving) automorphism group  $\mbox{Aut}(JS).$ Every automorphism of  $S$ induces a unique automorphism of $JS$. In fact $$[\mbox{Aut}(JS): \mbox{Aut}(S)] \in \{1,2\}$$according to whether or not $S$ is hyperelliptic; moreover, in the latter case $$ \mbox{Aut}(JS)/ \mbox{Aut}(S)=\{\pm  1\}.$$

As mentioned in the introduction, once a symplectic basis of $\Lambda=H_{1}(S, \mathbb{Z})$ is fixed, there is an isomorphism  $$\mbox{Aut}(JS) \cong \Sigma_S:=\{ R \in \Sp(2g, \mathbb{Z}) : R \cdot Z_S=Z_S\},$$where $(I_g \, Z_S)$ is the period matrix of $JS$. A change of basis induces a different but equivalent choice of $Z_S$ and a conjugate subgroup $\Sigma_S.$ One obtains a well-defined analytic submanifold$$\mathscr{S}_S:=\{Z\in \mathscr{H}_g: R \cdot Z=Z \text{ for all } R \in \Sigma_S\}$$of $\mathscr{H}_g$ whose points represent  ppavs admitting an action  equivalent to the one of $\mbox{Aut}(JS)$ in the symplectic group. Equivalently, as $- 1 \in \Sigma_S$, the previous submanifold represents ppavs admitting an action  equivalent to the one of $\mbox{Aut}(S).$   Clearly, $Z_S \in \mathscr{S}_S.$  

According to \cite{ST} (see also \cite[Lemma 3.8]{paola}), the dimension  $N_S$ of (the component which contains $JS$ of)  $\mathscr{S}_S$ agrees with   $$\dim (\mbox{Sym}^2 \mathscr{H}^{1,0}(S, \mathbb{C}))^{\tiny \mbox{Aut}(S)} $$where $\mbox{Sym}^2 \mathscr{H}^{1,0}(S, \mathbb{C})$ stands for the symmetric square of $\mathscr{H}^{1,0}(S, \mathbb{C})$. It follows that 
 $$N_S=\langle \chi_{\rho_a}^{\sy} | 1\rangle_G \,\, \mbox{ where } \,\, G=\mbox{Aut}(S)$$and $\chi_{\rho_a}^{\sy}$ denotes the character of the symmetric square of the analytic representation $\rho_a$ of $G$  and the brackets denote the usual inner product of characters of $G$.

\s

It is worth mentioning that $\mathscr{S}_S$ is related to some special subvarieties of $\mathcal{A}_g.$ Indeed,  as 
$\mbox{Aut}(JS)$ can be considered  as a subgroup of $$L_S:=\mbox{End}_{\mathbb{Q}}(JS)=\mbox{End}(JS)\otimes_{\mathbb{Z}} \mathbb{Q}$$one sees that $\mathscr{S}_S$ contains a complex submanifold  of $\mathscr{H}_g$ of matrices representing ppavs containing $L_S$ in their endomorphism algebras. This submanifold  is called a Shimura domain for $S$ and the corresponding ppavs form a so-called Shimura family for $S$; this a {\it special} subvariety of $\mathcal{A}_g$ (see \cite[\S3]{Moonen} for a precise definition). We refer to \cite[\S3]{wolfart} for more details.

\subsection{The group algebra decomposition}\label{jjo} The action of a group $G$ on a compact Riemann surface $S$  induces a $\mathbb{Q}$-algebra homomorphism from the rational group algebra of $G$  to $L_S$ $$\Xi : \mathbb{Q} [G] \to L_S.$$ 

Let $W_1, \ldots, W_r$ be the rational irreducible representations of $G,$ and for each $W_l$ let $V_l$ be a  complex irreducible representation of $G$ associated to it. Following \cite{l-r}, the equality \begin{equation}\label{noche1}1=e_1 + \cdots + e_r \,\, \mbox{ in }\,\, \mathbb{Q}[G],\end{equation}where $e_l$ is a  uniquely determined central idempotent associated to $W_l,$ yields an isogeny $$JS \sim A_{1} \times \cdots \times A_{r} \,\, \mbox{ where}\,\, A_{l} := \Xi (\alpha_l e_l)(JS)$$which is $G$-equivariant, with $\alpha_l \geqslant 1$ chosen to satisfy $\alpha_l e_l \in {\mathbb Z}[G]$. Additionally,  there are idempotents $f_{l1},\dots, f_{ln_l}$ such that \begin{equation}\label{noche2} e_l=f_{l1}+\dots +f_{ln_l}\end{equation}where  $n_l=d_{l}/s_{l}$ is the quotient of the degree $d_{l}$ and the Schur index $s_{l}$ of $V_l$. These idempotents provide $n_l$ pairwise isogenous subvarieties of $JS.$ If we denote by  $B_l$  one of them for each $l,$ then \eqref{noche1} and \eqref{noche2} provide the isogeny
\begin{equation} \label{eq:gadec}
JS \sim B_{1}^{n_1} \times \cdots \times B_{r}^{n_r} 
\end{equation}
known as the {\it group algebra decomposition} of $JS$ with respect to $G$. See  \cite{cr}.

\s

Let $H$ be a subgroup of $G.$  We  denote by $d_{l}^H$  the dimension of the vector subspace of $V_l$  of those elements which are fixed under $H.$ Following \cite[Proposition 5.2]{cr},   the group algebra decomposition   \eqref{eq:gadec} induces the following isogeny of the Jacobian $J(S/H)$ of the quotient $S/H$\begin{equation} \label{decoind1}
J(S/H) \sim  B_{1}^{{n}_1^H} \times \cdots \times B_{r}^{n_r^H} \,\,\, \mbox{ where } \,\,\, {n}_l^H=d_{l}^H/s_{l}.
\end{equation}

The previous  isogeny has proved to be fruitful in finding Jacobians $JS$ isogenous to a product of Jacobians of quotients of $S.$ See, for example,  \cite{kanirubiyo} and also \cite{RCR}.

\s

Assume that $(\gamma; k_1, \ldots, k_s)$ is the signature of the action of $G$ on $S$  and that this action is represented by the ske $\theta: \Delta \to G,$ with $\Delta$ as in \eqref{prese}. Observe that if $V_1=W_1$ denotes the trivial representation of $G$ then $B_1 \sim J(S/G)$ and therefore $\dim B_1 = \gamma.$ If $l \geqslant 2$ then, according to \cite[Theorem 5.12]{yoibero}, we have that
\begin{equation}\label{dimensiones1}
\dim B_{l}=m_{l}[d_{l}(\gamma -1)+\tfrac{1}{2}\Sigma_{j=1}^s (d_{l}-d_{l}^{\langle \theta(x_j) \rangle} )]  \end{equation} where $m_{l}$ is the degree of  $\mathbb{Q} \le L_{l}$ with $L_{l}$ denoting a minimal field of definition for $V_l.$

\s

For decompositions of Jacobians and families of Jacobians with respect to special groups, we refer to the articles \cite{Ba}, \cite{d1}, \cite{CRC}, \cite{Do}, \cite{FPa},  \cite{nos},  \cite{PA}, \cite{d3}, \cite{yosd},  \cite{yojpaa} and \cite{Ri}.

\section{Proof of Theorem \ref{tm}} \label{ptm} The proof of Theorem \ref{tm} is presented as a consequence of a series of propositions proved in this section. Hereafter, we  assume $q \geqslant 7$ to be prime and $S$ to be a  Riemann surface of genus $g:=q-1$  with a group of automorphisms $G$ of order $\lambda q$ where $\lambda \geqslant 1$ is an integer. 

\begin{propn} \label{lambda3}
If $\lambda=3$ then  $G$ is cyclic and acts with signature  $(0; 3,q,3q).$ Moreover,  $S$
is unique up to isomorphism  and $G$ is its full automorphism group.\end{propn}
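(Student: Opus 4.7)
The plan is to pin down the signature of the $G$-action via Riemann--Hurwitz, deduce that $G$ is cyclic, count surface-kernel epimorphisms up to equivalence, and finally verify maximality of the action through Singerman's classification.

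First I would write $\sigma(\Delta)=(\gamma;k_1,\ldots,k_s)$ for the signature of the $G$-action. Since $|G|=3q$ is odd with $q\geqslant 7$ prime, every element of $G$ has order in $\{1,3,q,3q\}$, so each $k_i\in\{3,q,3q\}$ and in particular $1-1/k_i\geqslant 2/3$. Riemann--Hurwitz gives
\[
\sum_{i=1}^{s}\bigl(1-\tfrac{1}{k_i}\bigr)=2-2\gamma+\tfrac{2}{3}-\tfrac{4}{3q}.
\]
The lower bound on each $1-1/k_i$ immediately rules out $\gamma\geqslant 2$; for $\gamma=1$ one must have $s\leqslant 1$, and neither $s=0$ (which forces $q=2$) nor $s=1$ (which forces $k_1=3q/(q+4)<3$ for $q\geqslant 7$) is admissible; for $\gamma=0$ a short arithmetic check forces $s=3$, and enumerating the ten triples with entries in $\{3,q,3q\}$ and solving $1/k_1+1/k_2+1/k_3=(q+4)/(3q)$ leaves only $(k_1,k_2,k_3)=(3,q,3q)$. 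Since this signature requires an element of order $3q$ in $G$, the group $G$ is cyclic, $G\cong C_{3q}\cong C_3\times C_q$.

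Next I would show that, up to topological equivalence, there is a single ske. Writing $G=\langle g\rangle$, any ske $\theta:\Delta\to G$ is determined by $(\theta(x_1),\theta(x_2))=(g^{aq},g^{3b})$ with $a\in\{1,2\}$ and $b\in\{1,\ldots,q-1\}$, since $\theta(x_3)=(\theta(x_1)\theta(x_2))^{-1}$ is then forced and one checks it automatically has order $3q$; this yields exactly $2(q-1)$ ske's. The group $\operatorname{Aut}(G)\cong(\mathbb{Z}/3q\mathbb{Z})^{*}\cong(\mathbb{Z}/3\mathbb{Z})^{*}\times(\mathbb{Z}/q\mathbb{Z})^{*}$ has order $2(q-1)$ and, by the Chinese Remainder Theorem, acts simply transitively on the set of pairs $(a,b)$; hence all ske's lie in a single $\operatorname{Aut}(G)$-orbit, and a fortiori in a single equivalence class. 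The associated equisymmetric stratum has complex dimension $3\gamma-3+s=0$, so $S$ is uniquely determined up to isomorphism.

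Finally, to conclude $G=\operatorname{Aut}(S)$ I would invoke Singerman's classification of finite-index inclusions between cocompact triangle groups and verify that the signature $(0;3,q,3q)$ does not appear as the smaller signature of any entry in his tables when $q\geqslant 7$ is prime. The main obstacle is precisely this maximality step: although conceptually clear (the three indices $3<q<3q$ are pairwise distinct, ruling out the infinite normal families $(k,k,k)\triangleleft(3,3,k)$ and $(k,k,n)\triangleleft(2,k,2n)$), the verification requires a careful scan of both the normal and the non-normal entries in Singerman's list to confirm that no extension of the $G$-action exists.
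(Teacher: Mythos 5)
Your Riemann--Hurwitz analysis, the deduction that $G$ is cyclic from the period $3q$, and the count of surface-kernel epimorphisms (with $\operatorname{Aut}(C_{3q})\cong(\mathbb{Z}/3\mathbb{Z})^{*}\times(\mathbb{Z}/q\mathbb{Z})^{*}$ acting simply transitively on the $2(q-1)$ ske's) are all correct and match the paper's route; the rigidity argument via $3\gamma-3+s=0$ is fine. The problem is the maximality step, where your expectation is wrong and the essential content of the proof is missing. The signature $(0;3,q,3q)$ \emph{does} appear as the smaller signature of an entry in Singerman's list: the non-normal pair $(0;3,n,3n)<(0;2,3,3n)$ of index $4$, which with $n=q$ applies verbatim to your situation. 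Your parenthetical remark only addresses the normal families, so a "careful scan" of the tables will not close the argument --- the tables positively permit an extension to a group of order $12q$ acting with signature $(0;2,3,3q)$, and this possibility must be killed by group theory, not by consulting the list.

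Concretely, what remains to be shown is that no group $G'$ of order $12q$ can act on $S$ with signature $(0;2,3,3q)$ and contain $G\cong C_{3q}$ as a (necessarily non-normal) subgroup. The paper does this as follows: by Sylow's theorems, for $q>11$ the only group of order $12q$ with a non-normal subgroup isomorphic to $C_{3q}$ is $C_q\times A_4$; but in $C_q\times A_4$ every element of order $2$ and every element of order $3$ lies in $\{1\}\times A_4$, so their product has order dividing $12$ and can never equal the inverse of an element of order $3q$, contradicting the existence of a generating vector of type $(2,3,3q)$. The finitely many primes $q\leqslant 11$ are then checked against Conder's computational data. Without an argument of this kind your proof establishes existence, uniqueness, and the signature, but not that $G$ is the full automorphism group.
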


\begin{proof} Let $(\gamma; k_1, \ldots, k_l)$ be the signature of the action of $G$ on $S.$  The Riemann-Hurwitz formula implies that \begin{equation}\label{bus}2(q-2) \geqslant 3q(2\gamma-2+\tfrac{2}{3}l).\end{equation}

Observe that if $\gamma \geqslant 1$ then $l=0$ and therefore $q=2,$ contradicting the assumption $q \geqslant 7$. We then assume $\gamma=0$ and therefore \eqref{bus} shows that $l=3.$ It follows that the signature of the action of $G$  is $$(0; k_1, k_2, k_3) \,\, \mbox{ where }\,\, \tfrac{1}{k_1}+\tfrac{1}{k_2}+\tfrac{1}{k_3}=\tfrac{1}{3}+\tfrac{4}{3q} \,\, \mbox{ and }\,\, k_j \in \{3,q,3q\}.$$ After a routine computation, one sees that the unique solution of the previous equation is, up to permutation, $k_1=3, k_2=q$ and $k_3=3q.$ The last equality  implies that $$G \cong C_q \times C_3 = \langle \alpha, \beta : \alpha^q=\beta^3=1, [\alpha, \beta]=1 \rangle.$$

Consider the Fuchsian group $\Delta$  of signature $(0; 3,q,3q)$ canonically presented   \begin{equation*} \label{luci} \Delta=\langle w_1, w_2, w_3 : w_1^3=w_2^q=w_3^{3q}=w_1w_2w_3=1\rangle\end{equation*}and let $\theta: \Delta \to G$ be a ske representing an action of $G$ on $S.$ It is not difficult to see that, up to an automorphism of $G$, the ske $\theta$ is given by $$\theta(w_1)=\beta, \,\, \theta(w_2)=\alpha \,\, \mbox{ and }\,\, \theta(w_3)=\alpha^{-1}\beta^2;$$this proves the uniqueness of $S$. By the results of \cite{singerman2}, if $G$ is strictly contained in the full automorphism group $\mbox{Aut}(S)$ of $S$  then $\mbox{Aut}(S)$ has order $12q$, acts on $S$ with signature $(0; 2,3,3q)$ and $G$ is a non-normal subgroup of it. By the classical Sylow's theorem,  if a group of order $12q$ with $q > 11$ has a non-normal subgroup isomorphic to $G$ then it is isomorphic to $C_q \times A_4$ where $A_4$ stands for the alternating group of order 12. However, the product of an element of order two and an element of order three of $C_q \times A_4$ cannot have order $3q.$ The cases $q \leqslant 11$ are not realised either; see \cite{conder}. The proof is done.
\end{proof}

\begin{propn} \label{567} $\lambda$ is different from $5,6$ and $7.$
\end{propn}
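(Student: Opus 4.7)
The plan is to mimic the strategy of Proposition \ref{lambda3}: apply the Riemann--Hurwitz formula to the hypothetical action of $G$ on $S$ with signature $(\gamma; k_1, \ldots, k_s)$, giving
$$2(q-2) = \lambda q\left(2\gamma - 2 + \sum_{i=1}^{s} \bigl(1 - 1/k_i\bigr)\right).$$
The first step is to rule out $\gamma \geq 1$: the case $\gamma \geq 2$ is impossible since the right-hand side would be at least $2\lambda q > 2(q-2)$; and the case $\gamma = 1$ would force $\sum(1 - 1/k_i) = 2(q-2)/(\lambda q) < 1/2$ for $\lambda \in \{5,6,7\}$ and $q \geq 7$, which is incompatible with each summand being $\geq 1/2$, unless $s=0$, yielding the absurdity $q = 2$. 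Hence $\gamma = 0$ and consequently $s \geq 3$.

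For $\lambda \in \{5,7\}$ the argument then closes immediately by a coarse estimate. Every $k_i$ divides $\lambda q$, so $k_i \in \{5, q, 5q\}$ or $k_i \in \{7, q, 7q\}$ respectively (with the obvious adjustment $k_i \in \{7, 49\}$ when $q = 7$). In every case each summand satisfies $(1 - 1/k_i) \geq 4/5$ or $6/7$; combined with $\sum(1-1/k_i) = 12/5 - 4/(5q)$ or $16/7 - 4/(7q)$, this forces $s < 3$, contradicting $s \geq 3$.

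The substantial case is $\lambda = 6$, where $k_i \in \{2, 3, 6, q, 2q, 3q, 6q\}$. Using $(1-1/k_i) \geq 1/2$ together with $\sum(1-1/k_i) = 7/3 - 2/(3q)$ bounds $s \leq 4$, so $s \in \{3,4\}$. My plan is to clear denominators by multiplying the equivalent equation $\sum 1/k_i = s - 7/3 + 2/(3q)$ through by $6q$: writing $a,b,c$ for the multiplicities of $k_i \in \{2,3,6\}$ and $d,e,f,g$ for those of $k_i \in \{q,2q,3q,6q\}$, the equation splits into a coefficient-of-$q$ relation $3a + 2b + c = 6s - 14$ and a constant-term relation $6d + 3e + 2f + g = 4$, subject to $a+b+c+d+e+f+g = s$. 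A short enumeration then shows: for $s = 3$, the nonnegative integer solutions of $3a + 2b + c = 4$ with $a+b+c \leq 3$ are $(0,1,2)$, $(0,2,0)$ and $(1,0,1)$, giving $d+e+f+g \in \{0,1\}$; and for $s = 4$, the solutions of $3a + 2b + c = 10$ with $a+b+c \leq 4$ are $(3,0,1)$ and $(2,2,0)$, giving $d+e+f+g = 0$. In every case $6d + 3e + 2f + g$ takes values only in $\{0,1,2,3,6\}$, never $4$, producing the required contradiction.

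The main obstacle is the final enumeration for $\lambda = 6$, since for this value of $\lambda$ the trivial lower bound $(1-1/k_i) \geq 1/2$ is too weak to rule out $s \in \{3,4\}$ by inequality alone. Once the Riemann--Hurwitz equation is recast as an integer-coefficient linear system in the multiplicities, however, the enumeration is purely mechanical and the constant-term obstruction $4 \notin \{0,1,2,3,6\}$ closes the case cleanly.
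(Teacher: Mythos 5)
Your overall strategy is sound and is genuinely different from the paper's: the paper invokes Kulkarni's classification of the admissible signatures for groups of order greater than $4(g-1)$ and then argues signature by signature, whereas you derive the constraints $\gamma=0$ and $3\leqslant s\leqslant 4$ directly from Riemann--Hurwitz and close the cases $\lambda\in\{5,7\}$ by a clean counting estimate. Those parts of your argument are correct and complete.

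There is, however, a genuine gap in the $\lambda=6$ case: the assertion that the cleared equation $q(3a+2b+c)+(6d+3e+2f+g)=q(6s-14)+4$ ``splits'' into $3a+2b+c=6s-14$ and $6d+3e+2f+g=4$ is not justified. Here $q$ is a fixed prime, not an indeterminate, so all that follows is the congruence $6d+3e+2f+g\equiv 4 \pmod q$; since $B:=6d+3e+2f+g$ can be as large as $6s\leqslant 24$ while $q$ may be as small as $7$, the values $B=q+4$ and $B=2q+4$ are not excluded a priori (for $q=7$, say, one must also rule out $B=11$ and $B=18$). The gap is fixable by a short supplementary argument: if $B\geqslant q+4\geqslant 11$ then $d+e+f+g\geqslant 2$, hence $a+b+c\leqslant s-2$ and $3a+2b+c\leqslant 3s-6$, while on the other hand $3a+2b+c=(6s-14)-(B-4)/q\geqslant 6s-16$; comparing gives $s=3$, whence $a+b+c\leqslant 1$, $d+e+f+g=2$, $B\leqslant 12$, forcing $q=7$ and $B=11$ --- but $11$ is not of the form $6d+3e+2f+g$ with $d+e+f+g=2$. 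With this supplement your enumeration (which is otherwise correct) does close the case; as written, though, the splitting step is an invalid inference and the proof is incomplete.
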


\begin{proof} If $\lambda$ equals $5, 6$ or $7$ then $G$ is a large group of automorphisms (that is, $ |G| >4(g-1)$) and therefore (see, for example, \cite[\S2.3]{K1}) 
 the signature of the action is either
 \begin{enumerate} \item $(0; k_1, k_2, k_3)$ for some $2 \leqslant k_1 \leqslant k_2 \leqslant k_3,$  
 \item $(0; 2,2,2,k)$ for some $k \geqslant 3,$ or
 \item $(0; 2,2,3,k)$ for some $3 \leqslant k \leqslant 5.$ 
\end{enumerate}

If $\lambda$ equals 5 or $7$ then $G$ has no involutions;  then the signature of the action $G$ is $$(0; k_1, k_2, k_3) \,\, \mbox{ for some }\,\, k_j \in \{5,q,5q\} \, \mbox{ or } \,\,  k_j \in \{7,q,7q\}$$respectively. The Riemann-Hurwitz formula implies that \begin{equation*}\label{grama}\tfrac{1}{k_1}+\tfrac{1}{k_2}+\tfrac{1}{k_3}=\tfrac{3}{5}+\tfrac{4}{5q} \,\, \mbox{ and }\,\, \tfrac{1}{k_1}+\tfrac{1}{k_2}+\tfrac{1}{k_3}=\tfrac{5}{7}+\tfrac{4}{7q}\end{equation*}respectively and this, in turn, implies that $q$ is negative; a contradiction.

We now assume that $G$ has order $6q$. If the signature of the action of $G$ is $(0; 2,2,2,k)$ then, by the Riemann-Hurwitz formula, we have  that $q+4$ divides $6q$ and therefore $q = 2;$  contradicting the assumption $q \geqslant 7$. The signatures $(0; 2,2,3,4)$ and $(0; 2,2,3,5)$ cannot be realised either, since a group of order $6q$ does not have elements of order $4$ nor $5.$ Besides, a direct computation shows that the signature   $(0; 2,2,3,3)$ contradicts the Riemann-Hurwitz formula.

 It follows that the signature of the action is $(0; k_1, k_2, k_3)$ where $k_j \in \{2,3,6,q,2q,3q, 6q\}$ satisfy, by the Riemann-Hurwitz formula, the equality $$\tfrac{1}{k_1}+\tfrac{1}{k_2}+\tfrac{1}{k_3}=\tfrac{2}{3}+\tfrac{2}{3q}.$$

Set $v=\#\{k_j : k_j=3\}.$ It is clear that  $v \neq 2,3$. Assume $v=1$ and say $k_1=3$. If $k_2, k_3 \geqslant 6$ then $q \leqslant 0.$ Then, we can assume $k_2=2$ and therefore $$\tfrac{1}{k_3}=-\tfrac{1}{6}+\tfrac{2}{3q} \,\, \mbox{ showing that }\,\, q \leqslant 3.$$Thus, $v=0.$ Now, let $u=\#\{k_j : k_j=2\}$ and observe that $u \neq 2,3.$ If $u=1$ then $$\tfrac{1}{k_2}+\tfrac{1}{k_3}=\tfrac{2}{3q}-\tfrac{1}{6} \,\, \mbox{ and therefore }\,\, q \leqslant 3.$$

All the above ensures that each $k_j \geqslant 6.$ It follows that $$\tfrac{1}{k_1}+\tfrac{1}{k_2}+\tfrac{1}{k_3}=\tfrac{2}{3}+\tfrac{2}{3q}\leqslant \tfrac{1}{2}$$and therefore $q < 0;$ contradicting the assumption $q \geqslant 7$.
\end{proof}

\begin{propn} \label{tutu}
If $\lambda \geqslant 8$ then $\lambda=8$ and $S \cong X_8.$
\end{propn}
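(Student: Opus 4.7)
The plan combines the classification of large-group signatures with a Sylow analysis of the order-$q$ subgroups of $G$. Since $|G|=\lambda q\geq 8q>4(q-2)$, the group $G$ is a large group of automorphisms, so $\sigma(\Delta)$ is one of $(0;k_1,k_2,k_3)$, $(0;2,2,2,k)$, or $(0;2,2,3,k)$ with $k\in\{3,4,5\}$. The third family is excluded because $q\geq 7$ divides no branch index, so an element of order $q$ would act freely on $S$, contradicting Riemann--Hurwitz (the quotient genus would be $2-2/q\notin\mathbb{Z}$). For $(0;2,2,2,k)$ one has $q\mid k$, say $k=aq$, and Riemann--Hurwitz yields $\lambda=4a(q-2)/(aq-2)\leq 4$, contradicting $\lambda\geq 8$.

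Hence $\sigma(\Delta)=(0;k_1,k_2,k_3)$, and the same Riemann--Hurwitz argument applied to the Sylow $q$-subgroup $P$ forces $P$ to have fixed points on $S$, so $q$ divides some $k_i$; after relabeling, $k_3=aq$. Riemann--Hurwitz applied to $S\to S/P$ yields $S/P\cong\mathbb{P}^1$ with $|\Fix(P)|=4$. Counting the points of $S$ with $q$-divisible stabilizer (each such point lies in $\Fix(P')$ for a unique Sylow $q$-subgroup $P'$, by cyclicity of stabilizers) gives $4n_q=\sum_{i\,:\,q\mid k_i}\lambda q/k_i$, and combining this with Sylow's congruence $n_q\equiv 1\pmod q$, the Hurwitz bound $\lambda\leq 84-168/q$, and the signature equation, a short case analysis forces $n_q=1$. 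Thus $P\triangleleft G$; since $S\to S/P$ is a Galois cover with deck group $P$, the quotient $G/P$ acts faithfully on $S/P\cong\mathbb{P}^1$ preserving the four branch points, so $G/P$ embeds into $S_4$ and $\lambda\leq 24$.

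A direct enumeration of triples $(k_1,k_2,k_3)$ with $q\mid k_3$ satisfying $\sum 1/k_i=1-2/\lambda+4/(\lambda q)$ for $\lambda\in\{8,12,24\}$ then leaves precisely the signature $(0;2,4,2q)$ for $\lambda=8$, $(0;2,3,3q)$ for $\lambda=12$, and no integer solution for $\lambda=24$. The $\lambda=8$ signature is the Accola--Maclachlan signature of $X_8$; its Teichm\"uller space is zero-dimensional, so rigidity combined with the uniqueness of the surface kernel epimorphism modulo $\Aut(G)\times\mathscr{B}$ identifies $S$ with $X_8$.

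The main obstacle is to exclude $\lambda=12$. With $P$ normal we have $G=C_q\rtimes H$ where $|H|=12$, and the existence of an element of order $3q$ forces the Sylow $3$-subgroup of $H$ to lie in $\ker(\phi\colon H\to\Aut(C_q))$. The relations $\theta(x)^2=1=\theta(x)\theta(y)\theta(z)$ with $\theta(y),\theta(z)$ of orders $3,3q$ translate, after a short computation, into requiring the existence of $h,k\in H$ of order $3$ such that $hk$ is an involution and $\phi_{hk}$ acts as inversion on $C_q$. This is checked to be impossible in each of the five isomorphism classes of $H$: for $H\in\{C_{12},\,C_2\times C_6,\,D_{12},\,Q_{12}\}$, the Sylow $3$-subgroup is normal and contains all order-$3$ elements, so $hk\in C_3$ is never an involution; for $H=A_4$, the Sylow $3$-subgroups are not normal, so $h\in\ker\phi$ would force $\ker\phi=A_4$ (whence $\phi$ is trivial), contradicting the required inversion action of $\phi_{hk}$.
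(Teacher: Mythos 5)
Your route is genuinely different from the paper's, which runs through Accola's eight-signature list for groups of order at least $8(g+1)$, grinds out Riemann--Hurwitz case by case, and cites Kulkarni for the final identification; your idea of forcing the Sylow $q$-subgroup $P$ to be normal so that $G/P$ embeds in $S_4$ (hence $\lambda\in\{8,12,24\}$) is attractive, and your bare-hands exclusion of $\lambda=12$ is a nice alternative to the paper's use of Proposition \ref{lambda3}. However, there are concrete gaps. First, you tacitly assume throughout that $P\cong C_q$ with exactly four fixed points, but nothing yet excludes $q^2\mid|G|$ (for $q=7$ one has $\lambda\leqslant 60$, so $7\mid\lambda$ is a priori possible); the counts $|\Fix(P)|=4$ and $4n_q=\sum_{q\mid k_i}\lambda q/k_i$ both depend on this. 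It can be repaired -- a subgroup of order $q^2$ would contain $T\cong C_q$ normal, and $Q/T$ would act faithfully on $S/T\cong\mathbb{P}^1$ fixing all four branch points of $S\to S/T$, which an order-$q$ M\"obius transformation cannot do for $q\geqslant 7$ -- but the repair is not automatic and must be supplied. Second, and more seriously, the ingredients you list do \emph{not} force $n_q=1$: for $q=7$ the case $\lambda=60$, $n_7=15$ satisfies $n_7\equiv 1\pmod 7$, $n_7\mid 60$, the Hurwitz bound (with equality), and the signature equation, which yields $(0;2,3,7)$. Killing it needs an extra input, e.g.\ Conder's bound of $150$ automorphisms in genus six (exactly what the paper invokes in its own case (2)), or the observation that a $(2,3,7)$-generated group is perfect and a perfect group of order $420$ must have a normal Sylow $7$-subgroup. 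As written, your ``short case analysis'' misses this case.

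Third, the endgame for $\lambda=8$ is asserted rather than proved: from the signature $(0;2,4,2q)$ you must still determine which group of order $8q$ can occur and show that the surface-kernel epimorphism is unique up to the $\Aut(G)\times\mathscr{B}$-action; zero-dimensionality of the Teichm\"uller space gives rigidity of each stratum, not uniqueness of the stratum. The paper outsources precisely this to \cite{K1}; you should either cite it or carry out the classification of $(2,4,2q)$-generated groups of order $8q$. Finally, a small caveat in your $\lambda=12$ analysis: the condition that $\phi_{hk}$ act as inversion is forced only in the subcase where the order-$3$ generator of $\theta(x_2)$ lies in $\ker\phi$ (so that its $C_q$-part is trivial and the $C_q$-part of $\theta(x_1)$ is nontrivial); since for $H=A_4$ the kernel argument makes $\phi$ trivial, this is exactly the situation there, so your five-group check does go through, but the blanket claim as stated is slightly too strong and should be justified along these lines.
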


\begin{proof} If the order of $G$ is at least $8(g+1)$ then,  following \cite[p. 77]{accola},  the signature of the action of $G$ is either$$\begin{array}{lcl}
(1) \,\, (0; 2,2,2,3), & \,\,\,\,\,  & (5) \,\, (0; 2,6,k) \mbox{ where } 6 \leqslant k \leqslant 11, \\
(2) \,\, (0; 2,3,k) \mbox{ where } k \geqslant 7, & \,\,\,\,\, & (6)\,\,(0; 2,7,k) \mbox{ where } 7 \leqslant k \leqslant 9, \\
(3) \,\, (0; 2,4,k) \mbox{ where } k \geqslant 5, & \,\,\,\,\, & (7) \,\, (0; 3,3,k) \mbox{ where } 4 \leqslant k \leqslant 11, \mbox{ or}\\
(4) \,\, (0; 2,5,k) \mbox{ where } 5 \leqslant k \leqslant 19, & \,\,\,\,\, & (8) \,\, (0; 3,4,k)\mbox{ where } 4 \leqslant k \leqslant 5.
\end{array}$$

\s

We observe that cases (1), (5) and (8) are not realised. Indeed, this fact follows from the contradiction between the fourth and fifth columns in  the following table.

\s

\begin{center}
\begin{tabular}{|c|c|c|c|c|}  
\hline

\, case \, & \, signature \, &  $|G|$  &  \, condition \, & Riemann-Hurwitz formula   \\ [0.4ex]\hline 
\, (1) \, & $(0; 2,2,2,3)$  & $6\lambda'q$ \, & $\lambda' \geqslant 2$ & $\lambda'=1-\tfrac{2}{q}$ \\ [0.4ex]
\, (5) \, & $(0; 2,6,k)$ &  $6 \lambda'q$ \, & $\lambda' \geqslant 2$ & $\lambda'=\tfrac{k}{k-3}(1-\tfrac{2}{q})$  \\[0.4ex]  \, (8.1) \, & 
$(0; 3,4,4)$  & $12\lambda'q $ \, & $ \lambda' \geqslant 1$ & $\lambda'=(1-\tfrac{2}{q})$ \\[0.4ex] \, (8.2) \, & 
$(0; 3,4,5)$  & $60\lambda'q $ \, & $\lambda' \geqslant 1$ & $\lambda'=\tfrac{2}{13}(1-\tfrac{2}{q})$ \\[0.4ex] 
\hline
\end{tabular}
\end{center}

\s

We also note that  cases (4), (6) and (7) are not realised. Indeed

\s

\begin{center}
\begin{tabular}{|c|c|c|c|c|}  
\hline

\, case \, & \, signature \, &  $|G|$  & \, condition \, & Riemann-Hurwitz formula   \\ [0.4ex]\hline \, (4) \, &
$(0; 2,5,k)$  & $10\lambda'q $ \, & $\lambda' \geqslant 1$ & $\lambda'=\tfrac{2k}{3k-10}(1-\tfrac{2}{q})$ \\ [0.4ex] \, (6) \, &
$(0; 2,7,k)$ &  $2 \lambda'q $ \, & $\lambda' \geqslant 4$ & $\lambda'=\tfrac{14k}{5k-14}(1-\tfrac{2}{q})$  \\[0.4ex] 
\, (7) \, & $(0; 3,3,k)$  & $3\lambda'q $ \, & $\lambda' \geqslant 3$ & $\lambda'=\tfrac{2k}{k-3}(1-\tfrac{2}{q}) $ \\[0.4ex] 
\hline
\end{tabular}
\end{center}

\s
\s
and notice that:
\begin{enumerate}
\item[(a)] in  case (4) we have that $\lambda'=1$ and therefore $q=4k/(10-k)$ and $5 \leqslant k \leqslant 9.$ However, for each $k$ as before we obtain that $q$ is not prime;
\item[(b)] in  case (6) we have that $\lambda'=4$ and therefore $q$ equals $14$, $28$ and $126$ for $k=7, 8$ and $9$ respectively; and
\item[(c)] in case (7) the facts that $\lambda \geqslant 3$ and $q \geqslant 7$ imply that $k \leqslant 8.$ If $k=4$ then $q$ is not prime, if $k=5$ then $\lambda'=3$ or $4$ and $q=5$ or 10, and if $k=6,7,8$ then $\lambda'=3$ and $q$ is not prime.
\end{enumerate}

\s

We claim that  case (2) is not realised either. Indeed, note that otherwise the order of $G$ equals $6\lambda'q$ for some $\lambda' \geqslant 2$ and the Riemann-Hurwitz formula reads $$k=\tfrac{6q\lambda'}{q(\lambda'-2)+4} \,\, \mbox{ and therefore }\,\,k':=\tfrac{6\lambda'}{q(\lambda'-2)+4} \in \mathbb{Z}^+$$

\begin{enumerate}
\item If $k'=1$ then $\lambda'=2+8/(q-6),$ showing that $q=7$ and $\lambda'=10.$ Consequently, the order of $G$ is $420$ and acts on $S$ of genus six with signature $(0; 2,3,7).$ However, such a Riemann surface does not exist because the maximal number of automorphisms that a Riemann surface of genus six can admit is 150 (see, for instance, \cite{conder}).
\s

\item If $k' \geqslant 2$ then  $\lambda' \leqslant 2 + 2/(q-3)$ and therefore $\lambda'=2.$ It follows that $G$ has order $12q$ and acts on $S$ with signature $(0; 2,3,3q).$ Observe that the signature of the action shows, in particular, that $S$ has a cyclic subgroup $H < G$ of automorphisms of order $3q$. However, as proved in Proposition \ref{lambda3}, if $S$ has a group of automorphisms of order $3q$ then $S$ does not have more  automorphisms; a contradiction.

\end{enumerate}
This proves the claim.
\s

All the above ensures that the signature of the action is  $(0; 2,4,k)$ for some $k \geqslant 5.$ Observe that the order of $G$ is $4 \lambda'q$ for some $\lambda' \geqslant 2$ and the Riemann-Hurwitz formula says $$\lambda'=\tfrac{2k}{k-4}(1-\tfrac{2}{q}) < \tfrac{2k}{k-4}.$$It follows that one of the following statements holds. \begin{enumerate}
\item $k=5$ and $\lambda' \in \{3, \ldots, 9\}$ and therefore $q=20/(10-\lambda').$
\item $k=6$ and $\lambda' \in \{3, 4, 5\}$ and therefore $q=12/(6-\lambda').$ 
\item $k\geqslant 5$ and $\lambda'=2,$ and therefore $q=k/2.$ 
\end{enumerate}

The first two cases must be disregarded because $q$ is not prime; then $G$ has order $8q$ and acts with signature $(0; 2,4,2q)$. By \cite[\S5]{K1}, we obtain that $S \cong X_8$ as desired.
\end{proof}

We recall that, following \cite{CI}, the closed  family $\bar{\mathscr{C}}_g$ consists of all
 those  compact Riemann surfaces  of genus $g$ endowed with a group of automorphisms $G$ isomorphic to $$\mathbf{D}_{q} \times C_2 \cong \mathbf{D}_{2q}$$ acting with signature $(0; 2,2,2,q).$ Moreover, if $S$ belongs to the interior of $\bar{\mathscr{C}}_g$ then $G$ agrees with the full automorphism group of $S.$ It was also observed in \cite{CI} that $X_8 \in \bar{\mathscr{C}}_g-{\mathscr{C}}_g.$
\begin{propn} \label{valp}
$\bar{\mathscr{C}}_g - \mathscr{C}_g=\{X_8\}.$
\end{propn}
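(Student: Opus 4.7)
My plan for proving $\bar{\mathscr{C}}_g-\mathscr{C}_g=\{X_8\}$ is to reduce it immediately to Proposition \ref{tutu} via a short order-counting argument.

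The inclusion $\{X_8\}\subseteq \bar{\mathscr{C}}_g-\mathscr{C}_g$ is already recorded in the paragraph just above the statement (following \cite{CI}), so no work is needed in that direction. For the reverse inclusion, I would take an arbitrary $S\in \bar{\mathscr{C}}_g-\mathscr{C}_g$. Because $S\in\bar{\mathscr{C}}_g$, the definition of the closed family guarantees that $\mbox{Aut}(S)$ contains a subgroup isomorphic to $\mathbf{D}_q\times C_2$. Because $S$ does not belong to the interior $\mathscr{C}_g$, on which the full automorphism group is precisely $\mathbf{D}_q\times C_2$, this inclusion must be strict. Consequently $|\mbox{Aut}(S)|$ is a strict multiple of $4q$, so writing $|\mbox{Aut}(S)|=\lambda q$ we obtain an integer $\lambda$ divisible by $4$ and strictly larger than $4$; in particular $\lambda\geqslant 8$.

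At this point Proposition \ref{tutu} applies verbatim to $G=\mbox{Aut}(S)$ and forces both $\lambda=8$ and $S\cong X_8$, which closes the argument.

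I do not foresee a real obstacle: the proposition is essentially a corollary of the classification assembled in Propositions \ref{lambda3}--\ref{tutu}, and the only content beyond invoking them is the simple divisibility observation $4q\mid |\mbox{Aut}(S)|$ coming from the containment $\mathbf{D}_q\times C_2\leqslant\mbox{Aut}(S)$. The one bookkeeping point I would double-check is the interpretation of $\lambda$ as $|\mbox{Aut}(S)|/q$; this is unambiguous because $q$ divides $|\mbox{Aut}(S)|$ (as $C_q\leqslant \mathbf{D}_q\times C_2$), and the hypothesis of Proposition \ref{tutu} places no coprimality restriction on $\lambda$, so the appeal is legitimate regardless of whether $q^2$ might divide $|\mbox{Aut}(S)|$.
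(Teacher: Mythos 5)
Your proposal is correct and is essentially identical to the paper's own proof: the paper likewise observes that a surface in $\bar{\mathscr{C}}_g - \mathscr{C}_g$ has automorphism group of order $4tq$ with $t \geqslant 2$ (your $\lambda \geqslant 8$) and then invokes Proposition \ref{tutu} to conclude $S \cong X_8$. The inclusion $X_8 \in \bar{\mathscr{C}}_g - \mathscr{C}_g$ is, as you note, already recorded from \cite{CI} in the paragraph preceding the statement.
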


\begin{proof}Observe that if $X$ belongs to $\bar{\mathscr{C}}_g - \mathscr{C}_g$ then its automorphism group has order $4tq$ for some $t \geqslant 2.$ It follows from Proposition \ref{tutu} that $t=2$ and that  $X\cong X_8.$
\end{proof}

For later and repeated use, we recall here that \begin{equation} \label{grupoAM}\mbox{Aut}(X_8) \cong \langle x,y,z: x^{2q}=y^2=z^2=1, [x,y]=[z,y]=1, zxz=x^{-1}y \rangle\end{equation}and its action on $X_8$ is represented by the ske \begin{equation} \label{epiAM}\Theta: \Delta_8 \to \mbox{Aut}(X_8) \mbox{ given by } (z_1,z_2, z_3) \to (z,zx,x^{-1}),\end{equation}where $\Delta_8$ is a Fuchsian group of signature $(0; 2,4,2q)$  presented as 
 \begin{equation} \label{fuAM}\Delta_8=\langle z_1, z_2, z_3 : z_1^2=z_2^4=z_3^{2q}=z_1z_2z_3=1
\rangle.\end{equation}

\begin{propn} \label{verde}
If $X$ is a compact Riemann surface of genus $g$ with a group of automorphisms isomorphic to $C_q \times C_2^2$ acting with signature $(0; 2,2q,2q)$ then $X \cong X_8.$
\end{propn}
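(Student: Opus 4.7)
The plan has two parts: first, classify the surface-kernel epimorphisms (skes) $\theta : \Delta \to G$ with $G \cong C_q \times C_2^2$ and $\Delta$ a triangle Fuchsian group of signature $(0; 2, 2q, 2q)$, showing there is exactly one equivalence class; second, exhibit such an action inside $\mathrm{Aut}(X_8)$. Since the signature is that of a triangle group, the Teichm\"{u}ller dimension is zero and the surface is rigid, so uniqueness of the ske forces uniqueness of $X$, and then $X\cong X_8$.

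For the classification, write $G = \langle a\rangle \times \langle b, c\rangle$ with $|a|=q$ and $b, c$ commuting involutions, so that the three involutions of $G$ are $b, c, bc$ and the elements of order $2q$ are the products $a^{i}t$ with $1 \leqslant i \leqslant q-1$ and $t\in\{b,c,bc\}$. Presenting
$$
\Delta = \langle x_1, x_2, x_3 : x_1^2 = x_2^{2q} = x_3^{2q} = x_1 x_2 x_3 = 1\rangle,
$$
a ske $\theta$ is determined by $s := \theta(x_1)\in\{b,c,bc\}$ and $\theta(x_2)=a^{i}t$ with $t\in\{b,c,bc\}$ and $1\leqslant i\leqslant q-1$; the product relation then forces $\theta(x_3)=a^{-i}st$, and the requirements that $\theta(x_3)$ have order $2q$ and that $\theta$ be surjective both reduce to the single condition $s\neq t$. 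The subgroup $\mathrm{Aut}(G) \cong C_{q-1}\times S_3$ of $\mathrm{Aut}(G)\times\mathscr{B}$ acts on the resulting parameter set by $(\lambda,\pi)\cdot(s,i,t)=(\pi(s),\lambda i,\pi(t))$, and this action is transitive because $C_{q-1}$ acts transitively on $\{1,\ldots,q-1\}$ while $S_3$ acts $2$-transitively on the three involutions of $C_2^2$. Hence there is a single equivalence class of skes.

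To realize such an action inside $X_8$, consider $H := \langle x,y\rangle \leqslant \mathrm{Aut}(X_8)$ in the presentation \eqref{grupoAM}. Since $y$ is central of order two and $y\notin\langle x\rangle$---otherwise $y=x^{q}$, whence $\mathrm{Aut}(X_8)=\langle x,z\rangle$ would have order $4q$, contradicting $|\mathrm{Aut}(X_8)|=8q$---we obtain $H=\langle x\rangle\times\langle y\rangle\cong C_{2q}\times C_2\cong C_q\times C_2^2$, of index $2$ in $\mathrm{Aut}(X_8)$. Via \eqref{epiAM}, the composition $\Delta_8\to\mathrm{Aut}(X_8)\to\mathrm{Aut}(X_8)/H\cong C_2$ sends $z_1,z_2$ to the non-trivial element and $z_3$ to the identity. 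Singerman's formula for the signature of an index-two subgroup of a Fuchsian group then gives $\Theta^{-1}(H)$ the signature $(0;2,2q,2q)$: the generator $z_1$ of order $2$ contributes no branching, $z_2$ of order $4$ yields one elliptic generator of order $2$, and $z_3$ of order $2q$ splits into two elliptic generators of order $2q$, with Riemann-Hurwitz confirming the genus of the quotient is zero. The most delicate point is this signature computation; it is underpinned by Singerman's list \cite{singerman2}, where $(0;2,2q,2q)\hookrightarrow(0;2,4,2q)$ appears as an admissible index-two extension of Fuchsian groups. Thus $H$ acts on $X_8$ with the required signature, and combined with uniqueness we conclude $X\cong X_8$.
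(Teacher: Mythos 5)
Your proof is correct and follows essentially the same two-step strategy as the paper: first show that all surface-kernel epimorphisms $\Delta \to C_q\times C_2^2$ of signature $(0;2,2q,2q)$ form a single equivalence class, then realise such an action on $X_8$ by restricting the $(0;2,4,2q)$-action of $\operatorname{Aut}(X_8)$ to the index-two subgroup $\langle x,y\rangle$. The only (harmless) difference is in the second step: the paper exhibits explicit generators of the corresponding index-two subgroup of $\Delta_8$ and checks that the restricted epimorphism coincides with the canonical ske, whereas you compute the signature of the preimage via the coset permutation representation and then invoke the uniqueness established in your first step.
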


\begin{proof} Let $\Delta_2$ be a  Fuchsian group of signature $(0; 2,2q,2q)$ presented as \begin{equation} \label{med}\Delta_2= \langle y_1, y_2, y_3 : y_1^2=y_2^{2q}=y_3^{2q}=y_1y_2y_3=1 \rangle\end{equation}and consider the group $G \cong C_q \times C_2^2$ presented as $$\langle A, B, C : A^q=B^2=C^2=(BC)^2=[A,B]=[A,C]=1 \rangle.$$

Let $\theta : \Delta_2 \to G$ be a ske representing the action of $G$ on $X$. Up to an automorphism of $G$ we can assume $\theta(y_1)=B.$ Moreover, after considering the automorphism of $G$ given by $$A  \mapsto A, \,\, B \mapsto B, \,\,C \mapsto BC,$$ we can assume that $\theta(y_2)$ equals  either $A^iB$ or $A^iC$ for some $i \in \mathbb{Z}_q^*.$ Note that the former case is impossible, since $\theta(y_1y_2)$ would not have order $2q.$ Thus, after sending $A$ to an appropriate power of it, we obtain that $\theta$ is equivalent to  \begin{equation} \label{raton} \Delta_2 \to C_q \times C_2^2 \,\, \mbox{ given by } (y_1, y_2, y_3) \mapsto (B,AC,(ABC)^{-1}).\end{equation}

Observe that, with the notations of \eqref{fuAM}, the elements $$\hat{y}_1:=z_2^{-2}, \,\, \hat{y}_2:=z_3^{-1} \,\, \mbox{ and }\,\, \hat{y}_3=z_2^{-1}z_3^{-1}z_2$$generate a  subgroup of $\Delta_8$ isomorphic to $\Delta_2$ and the restriction of \eqref{epiAM} to it 
\begin{equation} \label{corona}\Delta_2 \to C_q \times C_2^2 \,\, \mbox{ is given by } \,\, (\hat{y}_1, \hat{y}_2,\hat{y}_3 ) \mapsto (y,x,x^{-1}y).\end{equation} 
By letting $x=AC$
 and $y=B$ we see that \eqref{raton} and \eqref{corona} agree; consequently $X \cong X_8$. 
\end{proof}

\begin{propn} \label{carga}
If $Y$ is a compact Riemann surface of genus $g$ with a group of automorphisms isomorphic to $C_q \rtimes_2 C_4$ acting with signature $(0; 4,4,q)$ then $Y \cong X_8.$
\end{propn}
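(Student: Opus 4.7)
The plan is to exploit the rigidity of the triangle Fuchsian group $\Delta_4$ of signature $(0;4,4,q)$: since its Teichm\"uller space has dimension $3\cdot 0-3+3=0$, the surface $Y$ is pinned down up to isomorphism by the topological equivalence class of its surface-kernel epimorphism $\theta\colon \Delta_4\to G$, where $\Delta_4=\langle w_1,w_2,w_3\mid w_1^4=w_2^4=w_3^q=w_1w_2w_3=1\rangle$ and $G=\langle A,B\mid A^q=B^4=1,\ BAB^{-1}=A^{-1}\rangle$. It therefore suffices to prove that such a $\theta$ is essentially unique and that $X_8$ admits one.

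For the classification, the order-$q$ elements of $G$ are the nontrivial powers of $A$, and the order-$4$ elements are exactly the $2q$ elements $B^{\pm 1}A^k$ (since $(B^{\pm 1}A^k)^2=B^{\pm 2}\ne 1$). Writing $\theta(w_i)=B^{\epsilon_i}A^{a_i}$ for $i=1,2$ and using $BA^aB^{-1}=A^{-a}$, the requirement $\theta(w_1)\theta(w_2)\in\langle A\rangle$ forces $\epsilon_1+\epsilon_2\equiv 0\pmod 4$, i.e.\ $\epsilon_1=-\epsilon_2$. The outer automorphism of $G$ sending $B\mapsto B^{-1}$ interchanges the two sign cases, so I may assume $(\epsilon_1,\epsilon_2)=(1,-1)$; then $\theta(w_3)=A^{a_1-a_2}$ and surjectivity requires $a_1\not\equiv a_2\pmod q$. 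Next, the automorphism $A\mapsto A^a$, $B\mapsto BA^c$ of $G$ transforms $(a_1,a_2)$ into $(c+a\,a_1,\ c+a\,a_2)$, so the choice $c=-a\,a_1$ together with $a=(a_2-a_1)^{-1}\pmod q$ normalises $(a_1,a_2)$ to $(0,1)$. Hence the unique equivalence class is
\[
\theta(w_1)=B,\qquad \theta(w_2)=B^{-1}A,\qquad \theta(w_3)=A^{-1}.
\]

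To realise this action on $X_8$, I would consider the subgroup $H:=\langle x^2,\,zx\rangle\leqslant \mbox{Aut}(X_8)$, with generators as in \eqref{grupoAM}. Here $x^2$ has order $q$, the identity $(zx)^2=(zxz)x=(x^{-1}y)x=y$ shows $|zx|=4$, and $(zx)(x^2)(zx)^{-1}=zx^2z=(zxz)^2=(x^{-1}y)^2=x^{-2}$, so $H\cong C_q\rtimes_2 C_4\cong G$ with $[\mbox{Aut}(X_8):H]=2$. A direct Riemann--Hurwitz calculation applied to $X_8\to X_8/H$, starting from the parent signature $(0;2,4,2q)$ of $\mbox{Aut}(X_8)$ and invoking Singerman's list, yields that $H$ acts on $X_8$ with signature $(0;4,4,q)$. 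Since $\Delta_4$ is a triangular Fuchsian group, the kernel of any representative of the equivalence class classified above is determined up to conjugation in $\mbox{PSL}(2,\mathbb{R})$; both $Y$ and $X_8$ arise as $\mathbb{H}/\ker\theta$, whence $Y\cong X_8$.

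The main obstacle I expect is the bookkeeping in the classification step, in particular verifying that the outer automorphism $B\mapsto B^{-1}$ (or, equivalently, the braid transformation $\Phi_1$ composed with a suitable inner automorphism) genuinely collapses the two sign cases rather than producing a second inequivalent orbit, and that the $\mbox{Aut}(G)$-action is transitive on the $(a_1,a_2)$-parameters once the sign pattern has been fixed. Everything else is routine once this normal form is in hand.
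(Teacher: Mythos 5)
Your proof is correct and follows essentially the same route as the paper: show that all skes $\Delta\to C_q\rtimes_2 C_4$ of signature $(0;4,4,q)$ form a single equivalence class (the paper normalises to $(B,AB^{-1},A)$, you to $(B,B^{-1}A,A^{-1})$ — the same class), and then realise that class inside $\operatorname{Aut}(X_8)$ via the index-two subgroup $\langle x^{2},zx\rangle$, which is exactly the subgroup $\langle A,B\rangle=\langle x^{-2},zx\rangle$ the paper uses. The only loose point is your claim that Riemann--Hurwitz plus Singerman's list already forces the signature of $X_8\to X_8/\langle x^2,zx\rangle$ to be $(0;4,4,q)$: the signature $(0;2,2q,2q)$ is also an admissible index-two Singerman extension of $(0;2,4,2q)$ satisfying Riemann--Hurwitz (and $\langle x^2,zx\rangle$ does contain elements of order $2q$), so you must actually check which canonical generators of $\Delta_8$ land in the subgroup — e.g.\ that $z_2=zx$ and $z_3^2=x^{-2}$ lie in $H$ while $z_1=z$ and $z_3=x^{-1}$ do not — which is precisely the computation the paper carries out with its generators $z_2,\ z_3z_2z_3^{-1},\ z_3^{2}$.
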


\begin{proof} Let $\Delta_3$ be a  Fuchsian group of signature $(0; 4,4,q)$ presented as \begin{equation} \label{jcc}\Delta_3= \langle y_1, y_2, y_3 : y_1^4=y_2^{4}=y_3^{q}=y_1y_2y_3=1 \rangle\end{equation}and consider the group $G \cong C_q \rtimes_2 C_4$ presented as $$\langle A, B : A^q =B^4=1, BAB^{-1}=A^{-1}\rangle.$$ 
 
Let $\theta : \Delta_3 \to G$ be a ske representing the action of $G$ on $Y$. Then, after sending $B$ to $B^{-1}$ if necessary, $\theta$ is given by $$(y_1, y_2, y_3) \mapsto (A^iB, A^jB^{-1}, A^k) \,\, \mbox{ for some } i,j\in \mathbb{Z}_q \mbox{ and } k \in \mathbb{Z}_q^*.$$Up to   conjugation, we can assume $i=0$ and after sending $A$ to an appropriate power of it, we can assume $k=1.$ It follows that $j=1$ and therefore $\theta$ is equivalent to \begin{equation} \label{ence}  \Delta_3 \to C_q \rtimes_2 C_4 \,\, \mbox{given by} \,\, (y_1, y_2, y_3) \mapsto (B, AB^{-1}, A).\end{equation} Now, as done in the previous proposition, with the notations of \eqref{fuAM}, we see that $$\tilde{y}_1:=z_2, \,\, \tilde{y}_2:=z_3z_2z_3^{-1} \,\, \mbox{ and }\,\, \tilde{y}_3=z_3^{2}$$generate a  subgroup of $\Delta_8$ isomorphic to $\Delta_3$ and the restriction of \eqref{epiAM} to it \begin{equation} \label{corona2}\Delta_3 \to C_q \rtimes_2 C_4 \,\, \mbox{ is given by } \,\, (\tilde{y}_1, \tilde{y}_2,\tilde{y}_3 ) \mapsto (zx, x^{-3}z, x^{-2}).\end{equation}Write $A=x^{-2}$ and $B=zx$ to see that \eqref{ence} and \eqref{corona2} agree; consequently $Y\cong X_8.$
\end{proof}

\begin{propn} \label{cargados} Assume $q \equiv 1 \mbox{ mod } 4.$
There exists a unique, up to isomorphism, compact Riemann surface $X_4$ of genus $g$ with full automorphism group isomorphic to $C_{q} \rtimes_4 C_4$ acting on it  with signature $(0; 4,4,q).$
\end{propn}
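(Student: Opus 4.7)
The plan is to establish existence of the surface $X_4$, its uniqueness up to isomorphism, and maximality of the $G$-action (so that $G := C_q \rtimes_4 C_4$ is indeed the full automorphism group), via a ske analysis analogous to Propositions \ref{verde} and \ref{carga}.

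First, I would fix a presentation
$$G = \langle A, B : A^q = B^4 = 1,\ BAB^{-1} = A^{\rho}\rangle,$$
where $\rho \in \mathbb{Z}_q^*$ is a primitive fourth root of unity; such $\rho$ exists precisely because $q \equiv 1 \mbox{ mod } 4$, and the two possible choices $\rho, \rho^{-1}$ yield isomorphic groups via $B \leftrightarrow B^{-1}$. Since $1 + \rho + \rho^2 + \rho^3 = 0$ in $\mathbb{Z}_q$, each element $A^i B^{\pm 1}$ has order $4$, each $A^i B^2$ has order $2$, and therefore the only order-$4$ elements of $G$ are those of the form $A^i B^{\pm 1}$. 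For existence, I exhibit the ske $\theta: \Delta_3 \to G$ on the Fuchsian group $\Delta_3 = \langle y_1, y_2, y_3 : y_1^4 = y_2^4 = y_3^q = y_1 y_2 y_3 = 1\rangle$ defined by
$$\theta(y_1) = A^{-1}B, \qquad \theta(y_2) = B^{-1}, \qquad \theta(y_3) = A.$$
The orders, product relation and surjectivity are immediate, and the Riemann--Hurwitz formula confirms that $\ker(\theta)$ is a surface group of genus $q-1$.

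For uniqueness of the surface, note that the Teichm\"{u}ller space associated with $\Delta_3$ has dimension $3 \cdot 0 - 3 + 3 = 0$, so by quasiplatonic rigidity the surface is determined by the $\mbox{Aut}(G) \times \mathscr{B}$-equivalence class of its ske. Any ske $\eta: \Delta_3 \to G$ must send $y_3$ to a non-trivial power of $A$ and $y_1, y_2$ to elements of the form $A^\bullet B^{\pm 1}$ by the order analysis above. The product relation forces the $B$-exponents of $\eta(y_1)$ and $\eta(y_2)$ to have opposite signs (otherwise the product carries a non-trivial $B^{\pm 2}$-factor and cannot land in $\langle A \rangle$), and the braid transformation $\Phi_1$ identifies the two resulting sign-cases. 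A direct verification gives
$$\mbox{Aut}(G) = \{A \mapsto A^r,\ B \mapsto A^s B : r \in \mathbb{Z}_q^*,\ s \in \mathbb{Z}_q\}$$
(the relation $BAB^{-1} = A^\rho$ forces the exponent of $B$ in its image to be $1$); I then use the freedom in $r$ to normalise $\eta(y_3) = A$ and the freedom in $s$ to kill the $A$-part of $\eta(y_2)$, arriving at the canonical form above.

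Finally, to show maximality, I would invoke Singerman's list \cite{singerman2}: the only possible extension of $(0; 4, 4, q)$ is to $(0; 2, 4, 2q)$. If such an extension occurred, the full automorphism group would have order $8q$, so by Proposition \ref{tutu} we would have $X_4 \cong X_8$, and $\mbox{Aut}(X_8)$ would contain a copy of $G$. However, a short computation using \eqref{grupoAM} shows that every element of order $4$ in $\mbox{Aut}(X_8)$ has the form $x^a y^b z$ with $a$ odd, and any such element conjugates $x^2$ to $x^{-2}$; hence it induces an automorphism of the unique Sylow $q$-subgroup of order $2$, never of order $4$. This rules out $G \hookrightarrow \mbox{Aut}(X_8)$ and completes the proof. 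The main obstacle will be the ske classification --- systematically quotienting by the non-abelian $\mbox{Aut}(G)$-action together with $\Phi_1$ --- while existence and non-extendability reduce to explicit verifications.
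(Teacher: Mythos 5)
Your proposal is correct, and it follows the paper's overall scheme (explicit ske for existence; maximality by ruling out an embedding of $C_q\rtimes_4C_4$ into $\operatorname{Aut}(X_8)$, whose order-$4$ elements square to the central involution and hence only invert the Sylow $q$-subgroup). The one place where you genuinely diverge is the uniqueness step. The paper stops its ske analysis at two $\operatorname{Aut}(G)$-classes, $\theta_1=(A^{-1}B,B^{-1},A)$ and $\theta_2=(A^{-1}B^{-1},B,A)$, and then identifies the two resulting surfaces by a normaliser argument: the kernels $K_1,K_2$ are conjugate because the conjugation action of $N(\Delta_3)$ on $\{K_1,K_2\}$ has orbits of length $[N(\Delta_3):\Delta_3]=2$ (Singerman). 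You instead fuse the two classes directly inside $\operatorname{Aut}(G)\times\mathscr{B}$ via the braid $\Phi_1$; this does work: $\Phi_1$ sends $\theta_1$ to $(B^{-1},A^{-\rho}B,A)$, which equals the image of $\theta_2$ under the automorphism $A\mapsto A$, $B\mapsto A^{-\rho}B$ (using $B^{-1}A^{\rho}=AB^{-1}$). Your description of $\operatorname{Aut}(G)$ is also correct, since $\langle A\rangle$ is characteristic and the relation forces the $B$-exponent to be $1$. Your route is slightly more self-contained (it avoids the normaliser-orbit argument entirely), while the paper's route avoids having to decide whether the two sign-cases are braid-equivalent. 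Two minor caveats: when you invoke quasiplatonic rigidity you are implicitly using that every automorphism of a triangle group is realised in $N(\Delta_3)$, which is exactly the ingredient the paper makes explicit; and your Singerman step (only extension of $(0;4,4,q)$ is $(0;2,4,2q)$) needs $q\neq 5$ to exclude the sporadic inclusion $(0;4,4,5)\leqslant(0;2,4,5)$ --- harmless here, since the section's standing hypothesis is $q\geqslant 7$, and indeed for $q=5$ the statement fails (Bring's curve).
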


\begin{proof}
Consider the Fuchsian group $\Delta_3$ as in \eqref{jcc}, and  the group\begin{equation}\label{gratis}G \cong C_q \rtimes_4 C_4=\langle A, B : A^q=B^4=1, BAB^{-1}=A^\rho \rangle \end{equation}where $\rho$ is a primitive fourth root of unity in $\mathbb{Z}_q.$ If $\theta : \Delta_3 \to C_q \rtimes_4 C_4$ is a ske representing the action of $G$ on a compact Riemann surface $Z$ of genus $g$ then, by proceeding similarly as done in the previous proposition, one sees that $\theta$ is equivalent to  $$\theta_1(y_1, y_2, y_3) = (A^{-1}B, B^{-1}, A) \,\, \mbox{ or }\,\, \theta_2(y_1, y_2, y_3) = (A^{-1}B^{-1}, B, A).$$

It follows that, up to isomorphism, there are at most two  surfaces $Z$ as before; namely $$Z_j :=\mathbb{H}/K_j \,\, \mbox{ where } \,\, K_j=\mbox{ker}(\theta_j) \, \, \mbox{ for } j=1,2.$$

Observe that if the full automorphism  group of
 $Z_j$ is different from $G$ then, by Proposition \ref{tutu}, necessarily $Z_j \cong X_8$ and, in particular, $\mbox{Aut}(X_8)$ contains a subgroup isomorphic to $C_q \rtimes_4 C_4.$ However, this is not possible.  Indeed, with the notations of \eqref{grupoAM}, if $\iota \in \mbox{Aut}(X_8)$ has order 4 then $\iota^2$ equals the central element $y.$  It follows that $\mbox{Aut}(Z_j) \cong G$ for $j=1,2.$ 
\s

We record here that $Z_1$ and $Z_2$ are isomorphic if and only if $K_1$ and $K_2$ are conjugate in $\mbox{Aut}(\mathbb{H})$. As the normaliser of each $K_j$ is  $\Delta_3,$ it can be seen that $K_1$ and $K_2$ are conjugate  if and only if they are conjugate in the normaliser $N(\Delta_3)$ of $\Delta_3.$ Now, the action by conjugation of $N(\Delta_3)$ on $\{K_1, K_2\}$  has orbits of length $[N(\Delta_3): \Delta_3]$ which is, by \cite[Theorem 1]{singerman2}, equal to 2. Hence,  $K_1$ and $K_2$ are conjugate and thus  $X_4:=Z_1 \cong Z_2$ as desired. 
\end{proof}

\begin{propn} \label{agro} If $\lambda=4$ and $S \notin \bar{\mathscr{C}}_g$ then $q \equiv 1 \mbox{ mod } 4$ and $S \cong X_4.$ 
\end{propn}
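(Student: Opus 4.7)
\emph{Proof plan.} The approach is to enumerate the isomorphism types of $G$ together with the signatures for which it can act on $S$, and then identify which combinations force $S \in \bar{\mathscr{C}}_g$ and which yield $X_4$. Since $|G| = 4q$ with $q \geqslant 7$ prime, Sylow's theorem ensures that the unique Sylow $q$-subgroup $C_q$ is normal, so $G \cong C_q \rtimes H$ with $|H|=4$; the conjugation action of $H$ on $C_q$ factors through $\mbox{Aut}(C_q) \cong C_{q-1}$, and so the only possibilities for $G$ are $C_{4q}$, $C_q \times C_2^2$, $\mathbf{D}_q \times C_2$, $C_q \rtimes_2 C_4$ (in which $C_4$ acts through its order-$2$ quotient by inversion) and, only when $q \equiv 1 \mbox{ mod } 4$, $C_q \rtimes_4 C_4$ (in which $C_4$ acts faithfully).

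Next, the Riemann-Hurwitz formula applied to an action of signature $(\gamma; k_1, \ldots, k_s)$ gives $\tfrac{q-2}{2q} = 2\gamma - 2 + \Sigma_{i=1}^s(1-\tfrac{1}{k_i})$, forcing $\gamma = 0$ and $s \in \{3,4\}$. Restricting each $k_i$ to the orders of elements of a group of order $4q$ and solving the resulting Diophantine equations shows that the only admissible signatures are $(0;2,2,2,q)$, $(0;2,2q,2q)$ and $(0;4,4,q)$.

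I then match each signature to the groups realising it. For $(0;2,2,2,q)$, $G$ must contain three involutions whose product has order $q$ and whose images together with an order-$q$ element generate $G$; checking element orders shows that $C_{4q}$ has only one involution, while in $C_q \times C_2^2$, $C_q \rtimes_2 C_4$ and $C_q \rtimes_4 C_4$ every involution lies in the centraliser of the Sylow $q$-subgroup, so either the product of three such involutions has the wrong order or the image fails to generate $G$; hence only $\mathbf{D}_q \times C_2$ realises this signature and $S \in \bar{\mathscr{C}}_g$. For $(0;2,2q,2q)$, the existence of elements of order $2q$ excludes $C_q \rtimes_4 C_4$; in $C_{4q}$ and $\mathbf{D}_q \times C_2$ the order-$2q$ elements all lie in the unique cyclic subgroup of order $2q$ and so cannot generate $G$, and in $C_q \rtimes_2 C_4$ the sole involution $B^2$ cannot be combined with two order-$2q$ elements to give a trivial product; only $C_q \times C_2^2$ survives, and Proposition \ref{verde} then yields $S \cong X_8 \in \bar{\mathscr{C}}_g$. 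For $(0;4,4,q)$, the existence of elements of order $4$ leaves $C_q \rtimes_2 C_4$ and $C_q \rtimes_4 C_4$ (a direct calculation with products of order-$4$ elements rules out $C_{4q}$); Proposition \ref{carga} gives $S \cong X_8 \in \bar{\mathscr{C}}_g$ in the first case, and Proposition \ref{cargados} gives $S \cong X_4$ in the second, under the necessary hypothesis $q \equiv 1 \mbox{ mod } 4$.

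Under the assumption $S \notin \bar{\mathscr{C}}_g$ every outcome above is eliminated except $(0;4,4,q)$ with $G \cong C_q \rtimes_4 C_4$, forcing $q \equiv 1 \mbox{ mod } 4$ and $S \cong X_4$. The main obstacle is the careful bookkeeping to verify exactly which of the five isomorphism types of $G$ realises each of the three admissible signatures, the subtle point in each case being the surface-kernel epimorphism's generation condition; once that is settled, the identification of $S$ follows directly from Propositions \ref{verde}, \ref{carga} and \ref{cargados}.
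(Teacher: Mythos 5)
Your proposal follows essentially the same route as the paper: Riemann--Hurwitz reduces the possible signatures to $(0;2,2,2,q)$, $(0;2,2q,2q)$ and $(0;4,4,q)$, the five isomorphism types of groups of order $4q$ are matched against these, and Propositions \ref{verde}, \ref{carga} and \ref{cargados} identify the resulting surfaces; the overall structure and the conclusion are correct. One claim in your bookkeeping is inaccurate: in $C_q \rtimes_4 C_4=\langle A,B\rangle$ the involutions are precisely the elements $A^iB^2$, and these do \emph{not} centralise the Sylow $q$-subgroup, since $B^2AB^{-2}=A^{\rho^2}=A^{-1}$. The elimination of this group for the signature $(0;2,2,2,q)$ still goes through, but for a different reason: the involutions generate only the index-two dihedral subgroup $\langle A,B^2\rangle\cong \mathbf{D}_q$, and the product of any three involutions there is again an involution (or trivial), so no surjective surface-kernel epimorphism of signature $(0;2,2,2,q)$ onto $C_q\rtimes_4 C_4$ can exist. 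With that repair your argument is complete and matches the paper's.
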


\begin{proof} As in the proof of Proposition \ref{567}, 
 the signature of the action of $G$ on $S$  is either \begin{enumerate} \item $(0; k_1, k_2, k_3)$ for some $2 \leqslant k_1 \leqslant k_2 \leqslant k_3,$    
 \item $(0; 2,2,2,k)$ for some $k \geqslant 3,$ or
 \item $(0; 2,2,3,k)$ for some $3 \leqslant k \leqslant 5.$ 
\end{enumerate}

The third case must be disregarded since there is no group of order $4q$ with an element of order three. If the signature is as in the second case, then the Riemann-Hurwitz formula implies that $k=q.$ We now assume the signature to be as in the first case. The Riemann-Hurwitz formula says \begin{equation} \label{virus}\tfrac{1}{k_1}+\tfrac{1}{k_2}+\tfrac{1}{k_3}=\tfrac{1}{2}+\tfrac{1}{q}.\end{equation} Note that among $k_1, k_2, k_3$ not  two or three of them can be equal to 2. It follows that, up to permutation, there are two cases to consider.
\begin{enumerate}

\item Assume $k_1=2$ and $k_2, k_3  \geqslant 4.$ Then \eqref{virus} turns into $\tfrac{1}{k_2}+\tfrac{1}{k_3}=\tfrac{1}{q}.$ Note that if $k_2=4$ then $k_3 \leqslant 0.$ It follows that $k_2, k_3 \geqslant q$ and therefore $k_2=k_3=2q.$  

\s

\item Assume $k_1, k_2, k_3 \geqslant 4.$ If the number of periods $k_j$ that are equal to 4 is 2 then \eqref{virus} implies that the signature is $(0; 4,4,q);$ otherwise $q \leqslant 4.$
\end{enumerate}

\s

Thereby, the signature of the action of $G$ on $S$ is either $$(0; 2,2,2,q), \,\, (0; 2, 2q, 2q) \,\, \mbox{ or }\,\, (0; 4,4,q).$$

We recall that if $q \equiv 3 \mbox{ mod } 4$ then $G$ is isomorphic to either ${C}_{4q}$, ${C}_q \times {C}_2^2,$ $\mathbf{D}_{2q}$ or $C_q \rtimes_2 C_4,$ and if $q \equiv 1 \mbox{ mod } 4$ then, in addition, $G$ can be isomorphic to $C_q \rtimes_4 C_4.$ 

\s

\begin{enumerate}
\item If $G$ acts with  signature $(0; 2,2,2,q)$ then  $G$ is generated by three involutions and therefore $G \cong \mathbf{D}_{2q},$ showing that  $S \in \bar{\mathscr{C}}_g.$

\s

\item If $G$ acts with signature $(0; 2,2q,2q)$ then   $G$ is generated by two elements of order $2q$ whose product is an involution; thus $G \cong C_q \times C_2^2.$ By Proposition \ref{verde}, we see that $S \cong X_8$ and therefore $S \in \bar{\mathscr{C}}_g.$

\s

\item If $G$ acts with signature  $(0; 4,4,q)$ then $G$ is generated by two elements of order 4 and therefore $G$ is isomorphic to  $C_q \rtimes_2 C_4$ or $C_q \rtimes_4 C_4.$ By Proposition \ref{carga} the former case implies $S \cong X_8$ whilst  by Proposition \ref{cargados} the latter case implies $S \cong X_4.$
\end{enumerate}This finishes the proof.
\end{proof}

 \begin{propn} \label{lapiz}
 If $\lambda=2$ and $G$ is cyclic acting with signature $(0; 2,2,q,q)$ then $S \in \bar{\mathscr{C}}_g.$
 \end{propn}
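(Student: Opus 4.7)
The plan is to show that any such surface $S$ lies in $\bar{\mathscr{C}}_g$ by exhibiting an extension of the cyclic $C_{2q}$-action to an action of $\mathbf{D}_q \times C_2$ with the defining signature $(0;2,2,2,q)$ of $\bar{\mathscr{C}}_g$. The approach combines a uniqueness-of-ske classification with an explicit Singerman extension.

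First I would classify the ske. Let $\Delta$ be a Fuchsian group of signature $(0;2,2,q,q)$ with canonical generators $x_1, x_2, x_3, x_4$ as in \eqref{prese}, and let $\theta : \Delta \to C_{2q} = \langle a \rangle$ be a ske representing the action. Since $\theta(x_1)$ and $\theta(x_2)$ have order $2$, both equal $a^q$; writing $\theta(x_j) = a^{2\epsilon_j}$ with $\epsilon_j \in \mathbb{Z}_q^\ast$ for $j = 3,4$, the long relation forces $\epsilon_3 + \epsilon_4 \equiv 0 \pmod q$. After applying the automorphism $a \mapsto a^{\epsilon_3^{-1}}$ of $C_{2q}$ I reduce to the normal form $\theta(x_1) = \theta(x_2) = a^q$, $\theta(x_3) = a^2$, $\theta(x_4) = a^{-2}$. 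Thus $\theta$ is unique up to equivalence.

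Next I would construct the extension. By Singerman's list \cite{singerman2} the signature $(0;2,2,q,q)$ arises as a normal index-$2$ subgroup of a Fuchsian group $\Delta'$ of signature $(0;2,2,2,q)$; let $y_1, y_2, y_3, y_4$ be the canonical generators of $\Delta'$. Presenting
$$\mathbf{D}_q \times C_2 = \langle r, s, c : r^q = s^2 = (sr)^2 = c^2 = [r,c] = [s,c] = 1 \rangle,$$
note that $\langle rc \rangle \cong C_{2q}$ is an index-$2$ subgroup. I would set
$$\Theta(y_1, y_2, y_3, y_4) = (s,\, src,\, c,\, r^{-1}),$$
and verify directly that $\Theta : \Delta' \to \mathbf{D}_q \times C_2$ is a well-defined surjective ske: the three involution relations and the order-$q$ relation are immediate from $[r,c] = [s,c] = 1$ and $srs = r^{-1}$, the product telescopes to $1$, and $\{s, src, c, r^{-1}\}$ plainly generates the whole group.

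Finally I would identify $\Theta|_\Delta$ with $\theta$. Composing $\Theta$ with the projection $\mathbf{D}_q \times C_2 \to C_2$ of kernel $\langle rc\rangle$ (namely $r^i s^j c^\ell \mapsto j \bmod 2$) yields a homomorphism $\bar\Theta : \Delta' \to C_2$ sending $y_1, y_2 \mapsto 1$ and $y_3, y_4 \mapsto 0$. A direct Riemann--Hurwitz computation (using the standard formula relating the signature of an index-$n$ normal subgroup of a Fuchsian group to the orders of the images of its canonical generators) shows that $\ker \bar\Theta$ is an index-$2$ Fuchsian subgroup of $\Delta'$ of signature $(0;2,2,q,q)$, which we identify with $\Delta$. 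Hence $\Theta|_\Delta : \Delta \to \langle rc\rangle \cong C_{2q}$ is a ske, and by the uniqueness of step one it is equivalent to $\theta$. Consequently $\ker \Theta$ is $\operatorname{Aut}(\mathbb{H})$-conjugate to $\ker \theta$, so $S \cong \mathbb{H}/\ker \Theta$ carries the induced action of $\Delta'/\ker \Theta \cong \mathbf{D}_q \times C_2$ with signature $(0;2,2,2,q)$, placing $S$ in $\bar{\mathscr{C}}_g$.

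The main obstacle is the bookkeeping in the last paragraph: confirming that $\ker \bar\Theta$ has the precise signature $(0;2,2,q,q)$ requires matching, through the Riemann--Hurwitz (or Reidemeister--Schreier) rewrite, the fact that $y_3, y_4$ remain torsion generators of the subgroup while $y_1, y_2$ merely contribute unramified preimages; once that matching is in place, the uniqueness of $\theta$ immediately closes the argument and there is no need to compute canonical generators of $\Delta$ inside $\Delta'$ explicitly.
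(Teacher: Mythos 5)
Your strategy is essentially the paper's: show the cyclic ske of signature $(0;2,2,q,q)$ is unique up to equivalence, then exhibit an index-two extension to a ske of signature $(0;2,2,2,q)$ onto $\mathbf{D}_q\times C_2\cong\mathbf{D}_{2q}$, which is precisely the action defining $\bar{\mathscr{C}}_g$. Your normal form $(a^q,a^q,a^2,a^{-2})$ is the paper's $\theta_0=(b,b,a,a^{-1})$, your $\Theta=(s,src,c,r^{-1})$ is a valid ske and is topologically equivalent (via $R=rc$, $T=s$ and two braid moves) to the paper's $(R^q,T,TR,R^{q-1})$, and your computation that $\ker\bar\Theta$ has signature $(0;2,2,q,q)$ with $\Theta$ restricting to a surjection onto $\langle rc\rangle\cong C_{2q}$ is correct. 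The only cosmetic difference is direction: the paper restricts its known $\bar{\mathscr{C}}_g$-ske to explicit words $\hat{x}_i$ generating the subgroup, whereas you build the overgroup ske first and locate the subgroup as a kernel.

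The one step you must repair is the inference ``Consequently $\ker\Theta$ is $\Aut(\mathbb{H})$-conjugate to $\ker\theta$, so $S\cong\mathbb{H}/\ker\Theta$.'' Topological equivalence of two skes does \emph{not} yield conjugacy of their kernels in $\Aut(\mathbb{H})$; equivalent actions are realised on an entire (here one-dimensional) equisymmetric family of pairwise non-isomorphic surfaces, so if that inference were valid it would collapse $\bar{\mathscr{C}}_g$ to a single point. What your construction actually establishes is that the model ske $\theta_0$ extends, in the sense of the extending-actions framework of \S\ref{prelimi}, across the Singerman pair $(0;2,2,q,q)\leqslant(0;2,2,2,q)$ of index $2$, whose Teichm\"{u}ller spaces have the same dimension. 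The correct way to finish is therefore: every surface carrying an action equivalent to $\theta_0$ --- by your uniqueness step, every $S$ in the statement --- also carries the extended $\mathbf{D}_q\times C_2$-action, because the restriction map between the Teichm\"{u}ller spaces of $\Delta'$ and $\Delta$ is onto when the dimensions agree. This is the same (implicit) appeal the paper makes, and with that substitution your argument is complete.
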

 
 \begin{proof}
 
 Let $\Delta_4$ be a Fuchsian group of signature $(0; 2,2,q,q)$  presented as \begin{equation} \label{cuatro}\Delta_4=\langle x_1, x_2, x_3, x_4 : x_1^2=x_2^2=x_3^q=x_4^q=x_1x_2x_3x_4=1 \rangle\end{equation} and consider the cyclic group of order $2q$ generated by $a$ of order $q$ and $b$ of order two.  As $G$ has only one involution, it is clear that, after sending $a$ to a suitable power of it, each ske representing an action of $G$ on $S$  is equivalent to \begin{equation}\label{tetacero} \theta_0 : \Delta_4 \to C_q \times C_2 \,\, \mbox{ such that }\,\, \theta_0(x_1, x_2, x_3, x_4)=(b,b,a,a^{-1}).\end{equation}Then, such  surfaces $S$ form an equisymmetric complex one-dimensional family.  Let \begin{equation} \label{g44}\Delta_1=\langle y_1, y_2, y_3, y_4 :  y_1^2= y_2^2= y_3^2= y_4^q= y_1 y_2 y_3 y_4=1\rangle\end{equation} be a Fuchsian group of signature $(0; 2,2,2,q)$ and consider the group\begin{equation} \label{frio} \mathbf{D}_{2q}=\langle R,T : R^{2q}=T^2=(TR)^2=1 \rangle.\end{equation}

 We recall that, following \cite{CI}, the action of $ \mathbf{D}_{2q}$ on $S' \in \bar{\mathscr{C}}_g$ is represented by the ske \begin{equation} \label{antes}\theta: \Delta_1 \to \mathbf{D}_{2q} \,\, \mbox{ given by } \,\, (y_1, y_2, y_3, y_4) \mapsto ( R^q, T, TR, R^{q-1})\end{equation} Now, the elements of $\Delta_1$ $$\hat{x}_1:= y_1, \,\, \hat{x}_2:=y_2y_1y_2, \,\, \hat{x}_3:= y_4 \,\, \mbox{ and }\,\, \hat{x}_4:= y_1y_2y_4y_2y_1 $$generate a Fuchsian group isomorphic to $\Delta_4.$ The restriction of \eqref{antes} to it  \begin{equation}\label{ciber}\Delta_4 \to \langle R \rangle \cong G\,\, \mbox{ is given by  } \,\, (\hat{x}_1, \hat{x}_2, \hat{x}_3, \hat{x}_4) \mapsto (R^q,R^q,R^{q-1}, R^{1-q}).\end{equation}Set $a:=R^{q-1}$ and $b:=R^{q}$ to see that \eqref{ciber} agrees with \eqref{tetacero} and the result follows.\end{proof}

\begin{propn}
\label{lapiz2}
 If $\lambda=2$ and $G$ is a cyclic group acting with signature $(0; q,2q,2q),$ then  either 
 $S \cong X_8$ or 
 the full automorphism group of $S$ agrees with $G.$
In the latter case, 
there are exactly $\tfrac{q-3}{2}$ pairwise non-isomorphic compact Riemann surfaces.
\end{propn}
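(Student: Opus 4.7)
The plan is to parametrise the surface-kernel epimorphisms (skes) $\theta \colon \Delta \to G = C_q \times C_2$, with $\Delta$ a Fuchsian group of signature $(0; q, 2q, 2q)$, count their topological equivalence classes under $\mbox{Aut}(G) \times \mathcal{B}$, and then single out the unique class whose action extends. Write $G = \langle a \rangle \times \langle b \rangle$ with $|a| = q$, $|b| = 2$, and let $x_1, x_2, x_3$ be canonical generators of $\Delta$. Since the elements of order $2q$ in $G$ are exactly $\{a^j b : j \in \mathbb{Z}_q^*\}$, any ske has the form
$$\theta(x_1, x_2, x_3) = (a^i, a^j b, a^k b), \quad i, j, k \in \mathbb{Z}_q^*, \quad i + j + k \equiv 0 \pmod q,$$
and surjectivity is automatic.

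The group $\mbox{Aut}(G) \cong \mathbb{Z}_q^*$ acts by $u \cdot (i, j, k) = (ui, uj, uk)$; normalising $i = 1$ gives $q - 2$ representatives indexed by $j \in \mathbb{Z}_q \setminus \{0, -1\}$ (with $k = -1 - j$). The only nontrivial braid transformation of $\mathcal{B}$ preserving the ordered signature is $\Phi_2$, which swaps positions $2$ and $3$, acting by $(1, j, k) \mapsto (1, k, j)$. Its unique fixed point is $j = k = \tfrac{q-1}{2}$, while the remaining $q - 3$ representatives pair up, producing $\tfrac{q-1}{2}$ equivalence classes in total. As the Teichm\"uller dimension $3\gamma - 3 + s$ of the signature vanishes, each class determines a single Riemann surface up to isomorphism.

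For the extension analysis, I appeal to Singerman's tables: the only extension of $(0; q, 2q, 2q)$ with equal Teichm\"uller dimension is to $(0; 2, 2q, 2q)$ of index $2$. The extended group $G' \supset G$ is then abelian of order $4q$, hence isomorphic to $C_q \times C_2^2$, and Proposition \ref{verde} identifies the corresponding surface as $X_8$. At the orbifold level, the extension amounts to lifting the unique order-$2$ automorphism of $S/G$ (swapping the two cone points of order $2q$ and fixing the cone point of order $q$) to an automorphism of $S$ normalising $G$: such a lift exists iff there is $\phi \in \mbox{Aut}(G)$ fixing $\theta(x_1) = a^i$ and interchanging $\theta(x_2) = a^j b$ with $\theta(x_3) = a^k b$. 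Writing $\phi \colon a \mapsto a^u$, $b \mapsto b$, the first condition forces $u = 1$ and the second then forces $j = k$. Hence exactly one class (the fixed point of $\Phi_2$) extends, and its surface is $X_8$.

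Consequently, of the $\tfrac{q-1}{2}$ equivalence classes, one yields $X_8$ and the other $\tfrac{q-3}{2}$ yield pairwise non-isomorphic surfaces with full automorphism group $G$ (no further extensions are admitted by the previously established cases of Theorem \ref{tm}). The main subtlety I expect is verifying that no other equivalence class produces $X_8$ through a non-conjugate embedding of $G$ into $\mbox{Aut}(X_8)$; this is handled by computing the signatures of the two conjugacy classes of $C_q \times C_2$-subgroups in $\mbox{Aut}(X_8)$ (using the presentation \eqref{grupoAM}) and noting that only the class containing $\langle x \rangle$ realises signature $(0; q, 2q, 2q)$, while the normal subgroup $\langle x^2, y \rangle$ acts with signature $(0; 2, 2, q, q)$ and falls under Proposition \ref{lapiz}.
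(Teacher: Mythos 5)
Your argument is correct in substance and reaches the same conclusion by the same broad strategy (classify the skes, detect the unique extendable one, count the rest), but the two key technical steps are executed differently from the paper. For the count, you work directly with the $\mbox{Aut}(G)\times\mathscr{B}$-action on skes, normalising $\theta(x_1)=a$ and using the braid $\Phi_2$ to get $\tfrac{q-1}{2}$ topological classes; the paper instead reduces to the $q-2$ skes $\theta_j=(a,a^jb,a^{-j-1}b)$ by $\mbox{Aut}(G)$ alone and then counts isomorphism classes of the resulting rigid surfaces via conjugacy of the kernels in the normaliser $N(\Delta_5)$, using Singerman's index computation $[N(\Delta_5):\Delta_5]=2$ and the fact that the nontrivial coset swaps $x_2$ and $x_3$ (so $S_j\cong S_{-j-1}$). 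These are two faces of the same coin and give matching answers. For the extension step, the paper explicitly realises $\Delta_5$ inside the $(0;2,2q,2q)$-group \eqref{med} and enumerates all restrictions of the $C_q\times C_2^2$-ske \eqref{raton} to subgroups of signature $(0;q,2q,2q)$, showing each is equivalent to $\theta_{j^*}$; you instead derive a lifting criterion (existence of $\phi\in\mbox{Aut}(G)$ fixing $\theta(x_1)$ and swapping $\theta(x_2),\theta(x_3)$), which correctly isolates $j=k=\tfrac{q-1}{2}$. Your closing check that only the conjugacy class of $\langle x\rangle$ in \eqref{grupoAM} acts with signature $(0;q,2q,2q)$, while $\langle x^2,y\rangle$ acts with $(0;2,2,q,q)$, is accurate and is a clean way to rule out a second class landing on $X_8$.

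One point needs repair: your appeal to Singerman's tables is incomplete. Besides the normal index-$2$ pair $(0;q,2q,2q)\lhd(0;2,2q,2q)$, the tables also contain the non-normal index-$4$ pair $(0;n,2n,2n)<(0;2,4,2n)$ with $n=q$, so ``the only extension with equal Teichm\"uller dimension'' is not a single one. This does not break the proof: if some $\theta_j$ extended along the index-$4$ inclusion, the surface would carry $8q$ automorphisms with signature $(0;2,4,2q)$ and hence be $X_8$ by Proposition \ref{tutu}; since every cyclic subgroup of order $2q$ of \eqref{grupoAM} acting with signature $(0;q,2q,2q)$ lies in the index-two subgroup $\langle x,y\rangle\cong C_q\times C_2^2$, the index-$2$ extension would then also exist and your criterion $j=k$ would still apply. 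You should state this explicitly rather than rely on the parenthetical ``no further extensions are admitted.''
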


\begin{proof}
Let $\Delta_5$ be a Fuchsian group of signature $(0; q,2q,2q)$  presented as \begin{equation*}\label{azul8}\Delta_5=\langle x_1, x_2, x_3 : x_1^q=x_2^{2q}=x_3^{2q}=x_1x_2x_3=1 \rangle\end{equation*} and  consider the cyclic group of order $2q$ generated by $a$ of order $q$ and $b$ of order two.

If $\theta: \Delta_5 \to G $ is a ske representing the action of $G$ on $S$ then after sending $a$ to an appropriate power of it, we see that $\theta$ is equivalent to $$\theta_j = (a,a^jb, a^{-j-1}b) \mbox{ for some } \, j \neq -1,0.$$

Let $S_j$ be the compact Riemann surface defined by $\theta_j$ and write $j^*=\tfrac{q-1}{2}.$ 

\s

We claim that $S_{j^*} \cong X_8.$ To prove that, we notice that, by Proposition \ref{verde}, it suffices to verify that $\theta_{j^*}$ extends to the action of $C_q \times C_2^2$ with signature 
$(0; 2,2q,2q).$ With the notations of the proof of Proposition \ref{verde}, the elements $$\hat{x}_1:= y_3^2, \,\, \hat{x}_2:=y_1y_2y_1  \,\, \mbox{ and }\,\, \hat{x}_3:= y_2 $$ generate a subgroup of \eqref{med} isomorphic to $\Delta_5$ and the restriction of \eqref{raton} to it \begin{equation}\label{look}\Delta_5 \to \langle A, C\rangle \cong G\,\, \mbox{ is given by  } \,\, (\hat{x}_1, \hat{x}_2, \hat{x}_3) \mapsto (A^{-2}, AC, AC).\end{equation}By setting $a:=A^{-2}$ and $b:=C,$ we see that \eqref{look} is equivalent to ${\theta}_{j^*},$ as desired. 

\s

Let $j \neq j^*.$  If $\mbox{Aut}(S_j) \neq G$ then, by \cite{singerman2} and Proposition \ref{tutu}, the action $\theta_j$ must extend to the action  \eqref{raton} of $C_q \times C_2^2$ with signature $(0; 2,2q,2q).$
Observe that an element $y$ of \eqref{med} has order $2q$ if and only if it is conjugate to $y_2^k$ or to $y_3^{k}$ for some $k \in \{1, \ldots, 2q-1\}$  odd and different from $q.$ As the target group is abelian, the image of $y$ under \eqref{raton} is either $AC$ or $(ABC)^{-1}$. Now,  if $\Delta'$ is a subgroup of \eqref{med} isomorphic to $\Delta_5$ then the restriction of \eqref{raton} to the canonical generators of $\Delta'$  must be $$(A^{-2}, AC, AC), \,\,(B,AC, (ABC)^{-1}) \, \, \mbox{ or }\,\,(A^2,(ABC)^{-1},(ABC)^{-1}).$$

The second case is impossible since it does not have the required signature; the other two cases are equivalent to $\theta_{j^*}$. We conclude that if $j \neq j^*$ then  $\mbox{Aut}(S_j)=G$ and, in particular, $S_j$ is not isomorphic to $S_{j^*}$.

\s

Write $K_j=\mbox{ker}(\theta_j)$ for each $j \in \mathbb{Z}_q-\{-1,0,j^*\}.$ As argued in the proof of Proposition \ref{cargados}, we have that $S_{j_1}$ and $S_{j_2}$ are isomorphic if and only if $K_{j_1}$ and $K_{j_2}$ are conjugate in the normaliser $N(\Delta_5)$ of $\Delta_5.$ The action by conjugation of $N(\Delta_5)$ on $$\{K_j : j \in \mathbb{Z}_q-\{-1,0,j^*\}\}$$has orbits of length $[N(\Delta_5): \Delta_5]$ which is, by \cite[Theorem 1]{singerman2}, equal to 2. Hence,$$\{S_j : j \in \mathbb{Z}_q-\{-1,0,j^*\}\}$$splits into $\tfrac{q-3}{2}$ isomorphism classes. Finally, observe that the elements $x_2$ and $x_3$ of $\Delta_5$ are conjugate in $N(\Delta_5)$; thus, $S_j$ and $S_{-j-1}$ are isomorphic and therefore the isomorphism classes are represented by $S_j$ where $1 \leqslant j \leqslant \tfrac{q-3}{2}$.\end{proof}

\begin{propn} \label{ttex}
There exists a closed family $\bar{\mathscr{K}}_g$ of compact Riemann surfaces with a group of automorphisms isomorphic to the dihedral group of order $2q$ acting with signature $(0; 2,2,q,q).$ The number of equisymmetric strata of 
$\bar{\mathscr{K}}_g$ is at most \begin{displaymath}
 \left\{ \begin{array}{ll}
\tfrac{q+3}{4} & \textrm{if $q \equiv 1 \mbox{ mod } 4$}\\
\tfrac{q+1}{4}  & \textrm{if $q \equiv 3 \mbox{ mod } 4$}
  \end{array} \right.
\end{displaymath}and, independently of $q,$ one of them equals  $\mathscr{C}_g.$ 
\end{propn}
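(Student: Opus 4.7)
The plan is to establish three facts: non-emptiness of $\bar{\mathscr{K}}_g$ via an explicit surface-kernel epimorphism; the inclusion $\mathscr{C}_g \subseteq \bar{\mathscr{K}}_g$, with $\mathscr{C}_g$ appearing as an equisymmetric stratum; and the upper bound on the number of strata, obtained by counting orbits of skes under $\mathrm{Aut}(\mathbf{D}_q) \times \mathscr{B}$ via Burnside's lemma. For non-emptiness, take $\Delta_4$ as in \eqref{cuatro} and define $\theta_0 : \Delta_4 \to \mathbf{D}_q$ by $(x_1, x_2, x_3, x_4) \mapsto (s, sr, r, r^{-2})$; the product $s \cdot sr \cdot r \cdot r^{-2} = 1$ holds, and the image visibly generates $\mathbf{D}_q$.

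To realise $\mathscr{C}_g$ as a stratum, use that $q$ is odd to write $\mathbf{D}_{2q} \cong \mathbf{D}_q \times C_2$, with the $C_2$-factor generated by the central element $R^q$. Composing the ske \eqref{antes} with the projection $\mathbf{D}_{2q} \to C_2$ yields an epimorphism $\Delta_1 \to C_2$ sending the canonical generators to $(c, 1, c, 1)$. By a standard Riemann-Hurwitz and ramification analysis (noting $y_1, y_3$ contribute no branching in the double cover while $y_2, y_4$ each lift to two branch points of the original order), the kernel of this map is a Fuchsian group of signature $(0; 2, 2, q, q)$, hence isomorphic to $\Delta_4$. Restricting \eqref{antes} to this kernel gives a ske onto $\mathbf{D}_q$ with the same surface kernel $\Gamma$, realising each $S \in \mathscr{C}_g$ as a surface in $\bar{\mathscr{K}}_g$. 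Because $\mathscr{C}_g$ is itself equisymmetric (its full automorphism group is $\mathbf{D}_q \times C_2$ acting with fixed topological type), it constitutes exactly one stratum of $\bar{\mathscr{K}}_g$.

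For the upper bound, any ske $\theta : \Delta_4 \to \mathbf{D}_q$ has the form $(sr^a, sr^b, r^c, r^d)$ with $c, d \in \mathbb{F}_q^*$ and $d \equiv a - b - c \pmod q$. Using the automorphism $s \mapsto sr^v$, normalise $a = 0$, leaving the data $(b, c)$ with $c \ne 0$ and $b + c \ne 0$, and the residual free action $(b, c) \mapsto (ub, uc)$ of $\mathbb{F}_q^*$. I parametrise orbits by $t := b/c \in \mathbb{F}_q \setminus \{-1\}$, giving $q - 1$ orbits. The braid $\Phi_1$ is absorbed by the $\mathrm{Aut}$-normalisation, and $\Phi_3$ swaps $x_3, x_4$ and induces on $t$ the involution $\tau_1 : t \mapsto -t/(t+1)$. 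The key computation is that $\Phi_2^2$, though passing through signature $(2, q, 2, q)$ after one application, returns to $(2, 2, q, q)$ and acts by $(a, b, c, d) \mapsto (a, b + 2c, -c, d)$, inducing $\tau_2 : t \mapsto -t - 2$. Their product $\tau_2 \tau_1$ sends $t$ to $(-t-2)/(t+1)$ and has order two, so $\langle \tau_1, \tau_2 \rangle$ is a Klein four-group.

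Burnside's lemma on the Klein four-action on $\mathbb{F}_q \setminus \{-1\}$ then yields the bound: the identity fixes $q - 1$ points; $\tau_1$ fixes the two roots of $t^2 + 2t = 0$; $\tau_2$ has only $t = -1$ as fixed point, which is excluded; and $\tau_2 \tau_1$ fixes the roots of $t^2 + 2t + 2 = 0$, of discriminant $-4$, yielding two points iff $-1$ is a square mod $q$ (equivalently, $q \equiv 1 \pmod 4$). Summing, the number of orbits is $\tfrac{(q - 1) + 2 + 0 + 2}{4} = \tfrac{q + 3}{4}$ when $q \equiv 1 \pmod 4$ and $\tfrac{(q - 1) + 2 + 0 + 0}{4} = \tfrac{q + 1}{4}$ otherwise, bounding the number of equisymmetric strata from above. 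The main obstacle is the computation of the $\Phi_2^2$ action and the verification that the signature-preserving braid subgroup, generated by $\Phi_1, \Phi_3, \Phi_2^2$, reduces modulo $\mathrm{Aut}(\mathbf{D}_q)$ to exactly the Klein four-group on $t$; once this is established, the remaining steps are Burnside bookkeeping.
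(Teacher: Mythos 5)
Your proposal is correct and follows essentially the same route as the paper: the same kind of explicit ske for existence, the same restriction of the $\mathbf{D}_{2q}$-action on $\mathscr{C}_g$ to an index-two Fuchsian subgroup of signature $(0;2,2,q,q)$, and the same two braid-induced involutions ($\Phi_3$ giving $t\mapsto -t/(t+1)$ and $\Phi_2^2$ giving $t\mapsto -t-2$, which in the paper's coordinate $i=1/t$ are exactly $i\mapsto -i-1$ and $i\mapsto -i(2i+1)^{-1}$), with the orbit count merely repackaged via Burnside's lemma on $\mathbb{F}_q\setminus\{-1\}$ instead of the paper's direct fixed-point bookkeeping. Your stated ``main obstacle'' --- that $\langle\Phi_1,\Phi_3,\Phi_2^2\rangle$ reduces modulo $\operatorname{Aut}(\mathbf{D}_q)$ to \emph{exactly} the Klein four-group --- is not actually needed: since the proposition only asserts an upper bound, it suffices that $\tau_1$ and $\tau_2$ are genuinely induced equivalences (which you verify), as any further identifications could only decrease the number of orbits.
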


\begin{proof}

Let $\Delta_4$ be a Fuchsian group of signature $(0; 2,2,q,q)$  presented as in \eqref{cuatro} and consider the dihedral group of order $2q$ \begin{equation*}\label{denuevo} G\cong \mathbf{D}_q=\langle r, s : r^q =s^2=(sr)^2=1 \rangle.\end{equation*}The existence of the family $\bar{\mathscr{K}}_g$ follows after considering the ske $$ \Phi : \Delta_4 \to G \,\, \mbox{ given by } \,\, (x_1, x_2,x_3,x_4) \mapsto (s,s,r^{-1},r).$$

Let us now assume that $\theta : \Delta_4 \to G$ is a ske representing the action of $G$ on $S \in \bar{\mathscr{K}}_g$. If  $\theta(x_1) = \theta(x_2)$ then, after a conjugation and after sending $r$ to an appropriate power of it, we see that $\theta$ is equivalent to $\Phi.$ On the other hand, if  $\theta(x_1) \neq \theta(x_2)$ then, after considering a suitable automorphism of $G$, we see that $\theta$ is equivalent to the ske $$\theta_i:=(s,sr,r^{i}, r^{-i-1}) \,\, \mbox{ for some } i \in \mathbb{Z}_q-\{-1,0\}.$$

The braid transformation $\Phi_3$ (see \S\ref{vecinos}) shows that $\theta_i \cong \theta_{-i-1}.$ The rule $i \mapsto -i-1$ has order two, restricts to a bijection of $\mathbb{Z}_q-\{-1,0\}$ and has exactly  one fixed point; namely $i^*=\tfrac{q-1}{2}$.  Observe that if $\varphi_{u}$ is the automorphism of $G$ given by $r \mapsto r^u$ then $$\Phi = \Phi_2^2 \circ \varphi_{(i^*)^{-1}} (\theta_{i^*}).$$All the above ensures that $\theta$ is equivalent to either $$\Phi \, \, \mbox{ or }\,\, \theta_i \,\, \mbox{ for some } i \in \{1, \ldots, \tfrac{q-3}{2}\}.$$

Now, for each $i \in \{1, \ldots, \tfrac{q-3}{2}\}$ the transformation $\varphi_{i^{-1}} \circ \Phi_2^2$ provides an equivalence  $$\theta_i \cong \theta_{-i(2i+1)^{-1}}.$$ The rule $i \mapsto -i(2i+1)^{-1}$ has order two and  (up to the identification $i \sim -i-1$) restricts to a bijection of $\{1, \ldots, \tfrac{q-3}{2}\};$ it has a fixed point if and only if \begin{equation}\label{cuadratica}-i-1=-i(2i+1)^{-1} \,\, \iff \,\, 2i^2+2i+1=0\end{equation}and the quadratic equation above has solution in $\mathbb{Z}_q$ if and only if $q \equiv 1 \mbox{ mod }4.$ It follows that the number of pairwise non-equivalent skes $\theta$ is at most $$1+\tfrac{1}{2}( \tfrac{q-3}{2})=\tfrac{q+1}{4}   \,\, \mbox{ and }\,\, 2+\tfrac{1}{2} (\tfrac{q-3}{2}-1)=\tfrac{q+3}{4}$$if $q \equiv 3 \mbox{ mod 4}$ and $q \equiv 1 \mbox{ mod 4}$ respectively. Finally, with the notations of \eqref{g44}, define $$\hat{x}_1:= y_1y_2y_1, \,\, \hat{x}_2:=y_2, \,\, \hat{x}_3:= y_2y_1y_4y_1y_2 \,\, \mbox{ and }\,\, \hat{x}_4:= y_4 $$and notice that they generate a Fuchsian group isomorphic to $\Delta_4.$ The restriction of \eqref{antes} to it is given by \begin{equation} \label{opa}(\hat{x}_1,\hat{x}_2,\hat{x}_3,\hat{x}_4) \mapsto (T, T, R^{1-q}, R^{q-1}).\end{equation}If we write $s:=T$ and $r:=R^{q-1}$ we see that \eqref{opa} agrees with $\Phi.$ Hence, the action of $\Phi$ extends to \eqref{antes} and therefore the stratum defined by $\Phi$ agrees with $\mathscr{C}_g.$

\end{proof}
 
\begin{propn} If $\mathscr{K}_g$ stands for the interior of the closed family $ \bar{\mathscr{K}}_g$ then the full automorphism group of  $S \in \mathscr{K}_g$ is isomorphic to either $\mathbf{D}_q$ or $\mathbf{D}_{2q}.$ In addition \begin{displaymath}
 \bar{\mathscr{K}}_g-\mathscr{K}_g= \left\{ \begin{array}{ll}
\{X_8, X_4\} & \textrm{if $q \equiv 1 \mbox{ mod } 4$}\\
\,\,\,\,\{X_8\}   & \textrm{if $q \equiv 3 \mbox{ mod } 4.$}
\end{array} \right.
\end{displaymath}
\end{propn}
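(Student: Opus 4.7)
The plan is to leverage Theorem \ref{tm}, now established through Propositions \ref{lambda3}--\ref{ttex}, to pin down the full automorphism group of any $S \in \mathscr{K}_g,$ and then verify independently which isolated surfaces sit inside $\bar{\mathscr{K}}_g.$ Take $S \in \mathscr{K}_g$ and write $G := \mbox{Aut}(S).$ Since $S$ admits a $\mathbf{D}_q$-action with signature $(0;2,2,q,q),$ one has $\mathbf{D}_q \leqslant G,$ so $|G| = \lambda q$ with $\lambda \geqslant 2$ even. Theorem \ref{tm} then forces $\lambda \in \{2,4,8\}$ and identifies $S$ accordingly: if $\lambda = 2$ then $G \cong \mathbf{D}_q;$ if $\lambda = 4$ then either $S \in \bar{\mathscr{C}}_g$ (and, as $|G|=4q,$ actually $S \in \mathscr{C}_g$ with $G \cong \mathbf{D}_{2q},$ since the only point of $\bar{\mathscr{C}}_g - \mathscr{C}_g$ is $X_8,$ which has $|\mbox{Aut}| = 8q$) or $S \cong X_4;$ and if $\lambda = 8$ then $S \cong X_8.$

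By Propositions \ref{tutu} and \ref{cargados}, $X_4$ and $X_8$ are each unique up to isomorphism and thus represent single points of $\mathscr{M}_g.$ Since $\mathscr{K}_g$ is the interior of the $1$-dimensional family $\bar{\mathscr{K}}_g,$ these isolated surfaces can contribute only to the $0$-dimensional boundary $\bar{\mathscr{K}}_g - \mathscr{K}_g$ and not to $\mathscr{K}_g$ itself. This settles the dichotomy $G \cong \mathbf{D}_q$ or $\mathbf{D}_{2q}$ for every $S \in \mathscr{K}_g.$

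For the boundary I check both candidates. The inclusion $X_8 \in \bar{\mathscr{K}}_g$ is immediate from $X_8 \in \bar{\mathscr{C}}_g$ (Proposition \ref{valp}) together with the already noted containment $\bar{\mathscr{C}}_g \subseteq \bar{\mathscr{K}}_g.$ When $q \equiv 1 \mbox{ mod } 4,$ I exhibit a $\mathbf{D}_q$-action on $X_4$ with signature $(0;2,2,q,q)$ as follows: from the presentation $\mbox{Aut}(X_4) = \langle A, B : A^q = B^4 = 1, BAB^{-1} = A^\rho\rangle$ in \eqref{gratis} and the identity $\rho^2 \equiv -1 \mbox{ mod } q,$ the subgroup $\langle A, B^2 \rangle$ is dihedral of order $2q.$ I read off its signature by applying the standard coset-cycle recipe for sub-Fuchsian groups to the ske $(y_1, y_2, y_3) \mapsto (A^{-1}B, B^{-1}, A)$ appearing in the proof of Proposition \ref{cargados}: the images of $y_1$ and $y_2$ (of order $4$) lie outside $\mathbf{D}_q$ and hence swap the two cosets, each contributing one period $4/2 = 2,$ whereas the image of $y_3$ (of order $q$) lies inside $\mathbf{D}_q$ and fixes both cosets, contributing two periods of order $q;$ a Riemann--Hurwitz check pins the quotient genus at $\gamma = 0,$ yielding signature $(0;2,2,q,q)$ and thus $X_4 \in \bar{\mathscr{K}}_g.$ When $q \equiv 3 \mbox{ mod } 4,$ no $X_4$ exists by Proposition \ref{cargados} and so no extra boundary point appears. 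The main delicate step is the signature computation for the dihedral subgroup of $\mbox{Aut}(X_4);$ everything else is direct bookkeeping against the preceding propositions.
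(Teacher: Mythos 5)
Your proposal is correct, and it reaches the same conclusion by a genuinely different route at the one substantive point. For the dichotomy $\operatorname{Aut}(S)\cong\mathbf{D}_q$ or $\mathbf{D}_{2q}$ and for the inclusion $\bar{\mathscr{K}}_g-\mathscr{K}_g\subseteq\{X_8,X_4\}$, you argue top-down from the global constraint $\lambda\in\{2,4,8\}$ together with Propositions \ref{tutu} and \ref{agro}; the paper instead works stratum by stratum, reading off the full group from whether $\theta_i$ is equivalent to $\Phi$ and invoking the same propositions to bound each $\bar{\mathscr{K}}_g^i-\mathscr{K}_g^i$. These are logically equivalent. The real divergence is in proving $X_4\in\bar{\mathscr{K}}_g$ when $q\equiv 1 \bmod 4$: the paper exhibits explicit generators $\hat{x}_1=y_1y_2^2y_1^{-1}$, $\hat{x}_2=y_1^2$, $\hat{x}_3=y_1^{-1}y_3y_1$, $\hat{x}_4=y_3$ of a subgroup of $\Delta_3$ isomorphic to $\Delta_4$ and checks that the restricted ske is $\theta_e$ with $e=-\rho(\rho-1)^{-1}$, whereas you take the index-two dihedral subgroup $\langle A,B^2\rangle$ (dihedral because $\rho^2\equiv-1$) and compute the induced signature by Singerman's permutation-on-cosets formula plus Riemann--Hurwitz; your cycle counts (the order-$4$ generators swap the two cosets, each yielding one period $2$; the order-$q$ generator fixes both, yielding two periods $q$; genus $0$) are all correct. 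Your method is more mechanical and avoids verifying a presentation by hand, but it only establishes membership in $\bar{\mathscr{K}}_g$; the paper's explicit computation additionally identifies \emph{which} equisymmetric stratum has $X_4$ in its closure, namely the one indexed by the solution of $2i^2+2i+1\equiv 0 \bmod q$, which is what links the existence of $X_4$ to the extra stratum counted in Proposition \ref{ttex} when $q\equiv 1\bmod 4$. That extra information is not needed for the statement as given, so your argument is complete for this proposition; the only soft spot is the purely dimensional phrasing used to exclude the rigid surfaces $X_4,X_8$ from the interior, which is at the same level of informality as the paper's own use of the equisymmetric stratification.
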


\begin{proof} We keep the  notations of the proof of Proposition \ref{ttex}. The first statement is clear since the full automorphism group  of $S$ is isomorphic to $\mathbf{D}_{2q}$ or $\mathbf{D}_{q}$ according to whether or not $\theta_i$ is equivalent to $\Phi.$ 

\s

Let ${\mathscr{K}}_g^i$ denote the equisymmetric stratum defined by $\theta_i.$

\begin{enumerate}
\item If $\theta_i$ is equivalent to $\Phi$ then Propositions \ref{ttex} and \ref{valp} imply that $\bar{\mathscr{K}}_g^i-\mathscr{K}_g^i=\{X_8\}.$ 

\item If $\theta_i$ is non-equivalent to $\Phi$ then, 
 by Proposition \ref{agro}, we see that:
 \begin{enumerate}
 \item if $q \equiv 3 \mbox{ mod }4$ then $\bar{\mathscr{K}}_g^i-\mathscr{K}_g^i$ is empty, and
 \item if $q \equiv 1 \mbox{ mod } 4$ then $\bar{\mathscr{K}}_g^i-\mathscr{K}_g^i$ is either empty or $\{X_4\}.$ 
\end{enumerate}
\end{enumerate}

Assume $q \equiv 1 \mbox{ mod }4.$ We recall that the full automorphism group of $X_4$ is isomorphic to \eqref{gratis} and the corresponding action is given by the ske \begin{equation} \label{src} \Delta_3 \to \mbox{Aut}(X_4) \,\, \mbox{ such that }\,\,(y_1, y_2, y_3) \mapsto (A^{-1}B, B^{-1}, A)\end{equation}where $\Delta_3$ is as in \eqref{jcc}.
The elements $$\hat{x}_1:= y_1y_2^2y_1^{-1}, \,\, \hat{x}_2:= y_1^2, \,\, \hat{x}_3:= y_1^{-1}y_3y_1 \,\, \mbox{ and }\,\, \hat{x}_4:=y_3$$generate a Fuchsian group isomorphic to $\Delta_4.$ The restriction of \eqref{src} to it is equivalent to  \begin{equation}\label{naran}(\hat{x}_1, \hat{x}_2,\hat{x}_3,\hat{x}_4) \to (B^2, B^2A, A^{e}, A^{-e-1}) \,\, \mbox{ where }e:=-\rho(\rho-1)^{-1}.\end{equation}By letting $r:=A$ and $s:=B^2$ we see that \eqref{naran} agrees with $\theta_{e}.$ As $e$ solves the  equation \eqref{cuadratica} we conclude that $\bar{\mathscr{K}}_g^i-{\mathscr{K}}_g^i$ equals $\{X_4\}$ if $i$ solves \eqref{cuadratica} and is empty otherwise. 
\end{proof}
 
\begin{propn}
If $\lambda=2$ then $S$ belongs to  $\bar{\mathscr{K}}_g$ or $S$ is isomorphic to one of the 
$\tfrac{q-3}{2}$ pairwise non-isomorphic surfaces  of Proposition \ref{lapiz2}.\end{propn}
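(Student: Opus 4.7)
The plan is to classify all possible actions of a group $G$ of order $2q$ on $S$ by Riemann--Hurwitz and then appeal to the three cases already handled by Propositions \ref{lapiz}, \ref{lapiz2} and \ref{ttex}. Since $q$ is an odd prime, $G$ is necessarily isomorphic to either $C_{2q}$ or $\mathbf{D}_q$, and the orders of its non-identity elements lie in $\{2, q, 2q\}$ in the cyclic case and in $\{2, q\}$ in the dihedral case.

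First I would write the Riemann--Hurwitz formula for the signature $(\gamma; k_1, \ldots, k_s)$ of the action as
$$\sum_{i=1}^s \left(1 - \tfrac{1}{k_i}\right) = 3 - \tfrac{2}{q} - 2\gamma.$$
The values $\gamma \geqslant 2$ are excluded because the right-hand side becomes non-positive, and $\gamma = 1$ gives $\sum(1 - 1/k_i) = 1 - \tfrac{2}{q} < 1$, which is impossible since each summand is at least $\tfrac{1}{2}$ while an empty sum cannot equal a positive number. Hence $\gamma = 0$. The bound $\sum 1/k_i \leqslant s/2$ combined with $\sum 1/k_i = s - 3 + \tfrac{2}{q}$ then forces $s \leqslant 5$, after which a short case analysis on the multiplicities of the admissible periods shows that the only realisable pairs (group, signature) are $(C_{2q},\,(0;2,2,q,q))$, $(C_{2q},\,(0;q,2q,2q))$ and $(\mathbf{D}_q,\,(0;2,2,q,q))$.

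To finish, I would apply the earlier results directly. Proposition \ref{lapiz} places the first pair inside $\bar{\mathscr{C}}_g$, which by Proposition \ref{ttex} is a (closure of a) stratum of $\bar{\mathscr{K}}_g$, hence $S \in \bar{\mathscr{K}}_g$. Proposition \ref{ttex} itself puts the third pair in $\bar{\mathscr{K}}_g$ by construction. Finally, Proposition \ref{lapiz2} identifies $S$ in the second pair either with $X_8 \in \bar{\mathscr{C}}_g \subseteq \bar{\mathscr{K}}_g$ or with one of the $\tfrac{q-3}{2}$ surfaces $X_{2,k}$ of the statement. The only real work is the signature enumeration, and the main source of error there is forgetting to rule out mixed signatures such as $(0;2,q,2q)$ or $(0;2,2,2,q)$; but each such candidate fails the fraction equation after a one-line check ($q = 1$ and $q = 2$ respectively), so no genuine obstacle remains.
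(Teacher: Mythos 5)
Your proposal is correct and follows essentially the same route as the paper: Riemann--Hurwitz forces genus zero and signature $(0;2,2,q,q)$ or $(0;q,2q,2q)$, after which Propositions \ref{lapiz}, \ref{lapiz2} and \ref{ttex} finish the argument exactly as you indicate. The only point worth tightening is the exclusion of $\gamma=1$ with a single period, which your stated reasons (each summand at least $\tfrac12$; empty sum) do not cover: one must add that $1-\tfrac{1}{k_1}=1-\tfrac{2}{q}$ would force $k_1=q/2\notin\mathbb{Z}$ (the paper phrases this as $q=4$, contradicting primality).
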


\begin{proof}
Assume that the signature of the action of $G$ on $S$ is $(\gamma; k_1, \ldots, k_l).$ Observe that $\gamma=0.$ Indeed, otherwise  the Riemann-Hurwitz formula implies that $$l \leqslant 2(1-\tfrac{2}{q}) \,\, \mbox{ showing that } l=0 \mbox{ or } l=1.$$ In both cases we see that $\gamma=1$ and therefore $q=2$ and 4 respectively; a contradiction.  

As each $k_j  \geqslant 2,$ we see that $l \leqslant 6-\tfrac{4}{q}$ and therefore $l \in \{3,4,5\}.$
Let $v=\#\{k_j : k_j=2\}.$
\begin{enumerate}

\item If $l=5$ then $$\Sigma_{j=1}^5 \tfrac{1}{k_j}=2+\tfrac{2}{q} \leqslant \tfrac{v}{2}+\tfrac{5-v}{q}$$showing that $v=5.$ But, in this case $q=4;$ contradicting the fact that $q$ is prime.

\s

\item  If $l=4$ then $$\Sigma_{j=1}^4 \tfrac{1}{k_j}=1+\tfrac{2}{q} \leqslant \tfrac{v}{2}+\tfrac{4-v}{q}$$showing that $v\in \{2,3,4\}.$ Note that if $v=4$ then $q=2$ and if $v=3$ then $k_4 < 0.$ It follows that $v=2$ and therefore  $k_3=k_4=q.$

\s

\item  If $l=3$ then clearly $v \neq 3.$ If $v=2$ then $k_3 <0$ and if $v=1$ then $q < 4.$ Thus, \begin{equation}\label{mica}\tfrac{1}{k_1}+\tfrac{1}{k_2}+\tfrac{1}{k_3}=\tfrac{2}{q} \,\, \mbox{ where }\,\, k_j \in \{q, 2q\}.\end{equation}It is easy to verify that the unique solution of \eqref{mica} is $k_1=q, k_2=k_3=2q.$ 
\end{enumerate}

\s

Thereby, the signature of the action of $G$ on $S$ is either $(0; 2,2,q,q)$ or $(0; q,2q,2q).$

\s

We record here the simple fact that a group of order $2q$ is either cyclic or dihedral. 

\begin{enumerate}
\item If the signature is $(0; q,2q,2q)$ then $G$ is cyclic  and therefore, by Proposition \ref{lapiz2}, we obtain that $S \cong X_8 \in \bar{\mathscr{K}}_g$ or $S$ is isomorphic to one of the $\tfrac{q-3}{2}$ pairwise non-isomorphic surfaces with full automorphism group isomorphic to $C_q \times C_2.$

\s

\item If the signature is $(0; 2,2,q,q)$ then:
\begin{enumerate}
\item If $G$ is cyclic then, by Proposition \ref{lapiz}, we have that $S \in \bar{\mathscr{C}}_g \subset \bar{\mathscr{K}}_g.$ 

\item If $G$ is dihedral then, by Proposition \ref{ttex}, we have that $S \in \bar{\mathscr{K}}_g.$   
\end{enumerate}
\end{enumerate}This proves the proposition.
\end{proof}

\begin{remark} \label{casoq} Let $S \in \mathscr{M}_{q-1}^{q}$ and denote by $t$ the  automorphism of $S$ of order $q$.  According to \cite{scand}, the action of $G=\langle t \rangle$ on $S$ is equivalent to the action represented by one of the following skes:$$\theta_1=(t,t,t,t^{-3}) \,\, \,\,   \theta_2=(t,t,t^{-1},t^{-1})$$ $$   \theta_{3,i}=(t,t^{-1},t^{i}, t^{-i}) \,\, \,\,  \theta_{4,i}=(t,t,t^{i},  t^{q-2-i}) \,\, \,\,  \theta_{5,ij}=(t,t^{i},t^{j},  t^{q-1-i-j})$$where $2 \leqslant i \leqslant \tfrac{q-1}{2},$ $j \notin \{1, q-1, i, q-i\}$ and $q-1-i-j \notin \{1,i,j\}.$
\end{remark}

In terms of our terminology the results of \cite{scand} allow us to claim that the stratum determined by $\theta_1$ contains $X_3,$ the stratum determined by $\theta_2$ agrees with the family $\mathscr{C}_g,$ the strata determined by $\theta_{3i}$ agree with the equisymmetric strata of the family $\bar{\mathscr{K}}_g$ that are different from $\mathscr{C}_g,$ and the strata determined by $\theta_{4i}$ contain the surfaces $X_{2,k}.$ We also mention that the isolated strata of dimension one of $\mbox{Sing}(\mathscr{M}_{q-1})$ are the ones determined by the skes $\theta_{5ij}.$ 

\section{Proof of Theorem \ref{t2} and Proposition \ref{moduli}} \label{sprooft2}

Let $q \geqslant 5$ be a prime number and set $g=q-1.$ We write $\omega_l=\mbox{exp}(\tfrac{2\pi i}{l}).$

\subsection*{The family $\bar{\mathscr{C}}_g$} We recall that if $S \in \bar{\mathscr{C}}_g$ then 
the action of$$ G=\mathbf{D}_{2q}=\langle R,T : R^{2q}=T^2=(TR)^2=1 \rangle$$on $S$ has signature $(0; 2,2,2,q)$ and is represented by the ske  $\theta =( R^q, T, TR, R^{q-1})$. Let $H =\langle R^{q}\rangle \cong C_2.$ We consider the associated two-fold regular covering map $$\pi : S \to \Sigma:=S/H$$ and observe that $\Sigma$ has genus zero and $\pi$ ramifies over $2q$ values. If we  denote them by \begin{equation}\label{uv}u_1, \ldots, u_q \,\, \mbox{ and }\,\, v_1 \ldots, v_q\end{equation}then it is classically known that $S$ is isomorphic to the normalisation of  $$y^2= \Pi_{i=1}^{q}(x-u_i)(x-v_i)$$ 

Note that $ \Sigma \cong \mathbb{P}^1$ admits an  action of $K=G/H \cong \mathbf{D}_q$ in such a way that $\Sigma/K \cong S/G.$ It follows that   \eqref{uv} form two orbits of length $q$  under the action of the cyclic subgroup of order $q$ of $K$. Without loss of generality, we can assume that $$u_i=\omega_q^i \,\, \mbox{ and }\,\, v_i=\lambda \omega_q^i \,\, \mbox{ where }  \,\,1 \leqslant i \leqslant q$$and $\lambda$ is a nonzero complex number such that $t:=\lambda^q \neq 1.$ Hence, $S$ is isomorphic to the normalisation of the singular affine algebraic curve \begin{equation*}\label{hipe}\mathscr{X}_t :=\{(x,y) \in \mathbb{C}^2 : y^2=(x^q-1)(x^q-t) \} \end{equation*}for some $t \neq 0,1.$  It is a direct computation to verify that the transformations $$\mathfrak{r}(x,y)= (\omega_q x, -y) \,\, \mbox{ and }\,\, \mathfrak{t}(x,y) = (\sqrt[q]{t}\tfrac{1}{x}, \sqrt{t}\tfrac{y}{x^{q}})$$restrict to automorphisms of $\mathscr{X}_t$ and that  $\langle \mathfrak{r}, \mathfrak{t} \rangle \cong \mathbf{D}_{2q}.$ 

\s

Note that $\mathfrak{r}^q(x,y)=(x,-y)$ is the hyperelliptic involution and that $X_8\cong\mathscr{X}_{-1}.$

\subsection*{The surface $X_4$} Following the proof of Proposition  \ref{cargados}, the action of $$G=C_q \rtimes_4 C_4 =\langle A,B : A^q =B^4=1, BAB^{-1}=A^{\rho} \rangle$$ on $X_4$ has signature $(0; 4,4,q)$ and is represented by the ske $\theta=(A^{-1}B, B^{-1}, A).$ Let $Q =\langle a\rangle$ and observe that the associated $q$-fold regular covering map $$\pi : X_4 \to \Sigma:=X_4 / Q \cong \mathbb{P}^1$$ ramifies over four values marked with $q.$ As $\Sigma$ admits the action of $K=G/Q \cong C_4$ with signature $(0; 4,4)$ and $\Sigma/K \cong S/G,$ the branch values of $\pi$ form  one orbit under the action of $K$. Without loss of generality, we can  assume these branch values  to be $1,i,-1$ and $-i$ (where $i^2=-1$)  and that their corresponding rotation numbers  (modulo $q$) are  $1,\rho,\rho^2$ and $\rho^3$ respectively. Then, following  \cite{Gab} (see also \cite{H} and \cite{W}), $X_4$ is isomorphic to   the normalisation of \begin{equation}\label{naranja}y^q=(x-1)(x-i)^{\rho}(x+1)^{q-1}(x+i)^{q-\rho}.\end{equation}
Since $\rho$ has order 4 in $\mathbb{Z}_q,$ there exists $e \in \mathbb{Z}$ such that $\rho^2+1=eq.$ Set $$\mathfrak{a}(x,y) = (x, \omega_q y) \,\, \mbox{ and }\,\, \mathfrak{b}(x,y) = (ix,\tfrac{-(x+i)^{e-\rho}}{(x-i)^{e-1}(x+1)^{\rho -1}}y^{\rho})$$and notice that they restrict to automorphisms of \eqref{naranja}. If $\varphi$ is as in the statement of the theorem, then routine computations show that $$\varphi(-ix)\varphi(-x)^{\rho}\varphi(ix)^{\rho^2}\varphi(x)^{\rho^3}=y^{1-\rho^4}$$ and this implies that $\mathfrak{b}$ has order four. Now, it is direct to see that  $\langle \mathfrak{a}, \mathfrak{b} \rangle \cong C_q \rtimes_4 C_4.$

\begin{remark} \label{remax4}
The M\"{o}bius transformation $\iota :\mathbb{P}^1 \cong \Sigma \to \mathbb{P}^1$ given by $$\iota(z)=i\tfrac{z-1}{z+1} \,\,  \mbox{ satisfies } \,\, \iota(1,i,-1,-i) = (0,-1,\infty,1)$$and lifts to obtain an isomorphism between $X_4$ and the
 Riemann surface given by $$y^q=x(x+1)^{\rho}(x-1)^{q-\rho}$$This provides the explicit  model for $X_4$ defined over its field of moduli $\mathbb{Q}$. 
\end{remark}

\subsection*{The surface $X_3$} Similarly as before, the normalisation of $ y^3=x^q-1$ defines a Riemann  surface of genus $g$ and $(x,y) \mapsto (\omega_{q}x, \omega_{3}y)
$ restricts to an automorphism of it of order $3q.$  

\subsection*{The surfaces $X_{2,k}$} Following the proof of Proposition \ref{lapiz2}, the action of $$G=C_q \times C_2 = \langle a, b : a^q =b^2=[a,b]=1 \rangle$$on $X_{2,k}$ has signature $(0; q,2q,2q)$ and is determined by the ske 
$\theta_k= (a,a^kb, a^{-k-1}b)$ where $1 \leqslant k \leqslant \tfrac{q-3}{2}.$ If $Q =\langle a\rangle$ then the associated regular covering map $$\pi_k :  X_{2,k} \to \Sigma_k:=X_{2,k} / Q \cong \mathbb{P}^1$$  ramifies over four values marked with $q.$ As $\Sigma_k$ admits the action of $K=G/Q \cong C_2$ with signature $(0; 2,2)$ and $\Sigma/K \cong S/G,$   two branch values of $\pi_k$  form an orbit  and the remaining ones are fixed  under the action of $K$. Thus, we can assume that the branch values of $\pi_k$ are $1,-1, \infty$ and  $0,$  where the first two form an orbit. It follows that $X_{2,k}$ is isomorphic to\begin{equation}\label{naranjas}y^q=n^{n_k}(x-1)(x+1)\end{equation}for some $1 \leqslant n_k \leqslant q-1$ such that $n_k \neq q-2.$ It is easy to see that $$\mathfrak{a}(x,y) = (x, \omega_{q}y) \,\, \mbox{ and }\,\, \mathfrak{b}_k(x,y) = (-x, (-1)^{n_k}y)$$ are automorphisms of \eqref{naranjas} and that $\langle \mathfrak{a}, \mathfrak{b}_k \rangle \cong C_q \times C_2.$

\subsection*{The family $\bar{\mathscr{K}}_g$} Following the proof of Proposition \ref{ttex}, the action of \begin{equation} \label{dulces}G= \mathbf{D}_q= \langle r, s : r^q =s^2=(sr)^2=1 \rangle\end{equation}on $S \in \bar{\mathscr{K}}_g$ has signature $(0; 2,2,q,q)$ and is determined by the ske $ \theta_i =(s,sr,r^{i}, r^{-i-1})$ for some $1 \leqslant i \leqslant \tfrac{q-1}{2}.$  If $Q =\langle r\rangle$ then the associated regular covering map $$\pi: S \to \Sigma:=S / Q \cong \mathbb{P}^1$$  ramifies over four values marked with $q.$ As $\Sigma$ admits the action of $K=G/Q \cong C_2$ with signature $(0; 2,2)$ and $\Sigma/K \cong S/G,$ the branch values of $\pi$ form two orbits under the action of $K.$ We can assume these values to be $1,-1$ and $t,-t$ for some $t \neq 0, \pm 1$. In addition,  the rotation numbers are 1, $q-1$, 1 and $q-1$ respectively.  Thus, $S$ is isomorphic to $$\mathscr{Z}_t :=\{ (x,y) \in \mathbb{C}^2 :  y^q=(x-1)(x+1)^{q-1}(x-t)(x+t)^{q-1}\}$$It is straightforward to verify that the transformations $$\mathfrak{r}(x,y) = (x, \omega_q y) \,\, \mbox{ and }\,\, \mathfrak{s}(x,y) \mapsto (-x, (x^2-1)(x^2-t^2)y^{-1})$$restrict to  automorphisms of $\mathscr{Z}_t$ and $\langle  \mathfrak{r}, \mathfrak{s} \rangle\cong \mathbf{D}_q.$ 

\subsection*{Proof of Proposition \ref{moduli}} Assume that $S \in \mathscr{K}_q$ and let $G$ be as in \eqref{dulces}. The covering  $$\Sigma \to \Sigma/K \cong S/G \,\, \mbox{ can be chosen as }\, z \mapsto z^2,$$and therefore if $S \cong \mathscr{Z}_t$ then the branch values of $S \to S/G$ are $\infty$ and $0$ marked with 2,  and $1$ and $t^2$  marked with $q$.

Let $\sigma \in \mbox{Gal}(\mathbb{C}/\mathbb{Q})$ and assume  $S_t$ and $(S_t)^{\sigma}=S_{\sigma(t)}$ to be isomorphic. The facts  that
\begin{enumerate}
\item $G \cong \mbox{Aut}(S)$  provided that $S \in \mathscr{K}_g-\mathscr{C}_g,$ and
\item $G$ is the unique group of automorphisms of $S$ isomorphic to $\mathbf{D}_q$ provided that $S \in \mathscr{C}_g$
\end{enumerate}imply that there is a M\"{o}bius transformation $\varphi: \mathbb{P}^1 \to \mathbb{P}^1$ such that  $$\varphi(\{\infty, 0 \})=\{\infty, 0 \}  \,\,\mbox{ and} \,\,\, \varphi(\{1,t^2\})=\{1,\sigma(t)^2\}.$$
We have two possible cases:

\begin{enumerate}
\item If $\varphi(0)=0$ and $\varphi(\infty)=\infty$ then either $\varphi(z)=z$ or $ \varphi(z)=\sigma(t)^2z.$ 

\s

\item If $\varphi(0)=\infty$ and $\varphi(\infty)=0$ then either $\varphi(z)=\tfrac{1}{z}$ or $\varphi(z)=\tfrac{\sigma(t)^2}{z}.$
\end{enumerate}

\s

Observe that the latter case in (1) and the former case in (2) imply that $t\sigma(t)=\pm 1;$ a contradiction. It follows that $\sigma(t)^2=t^2$ showing that $\sigma(t)=t.$ 
Conversely, if $\sigma(t)=t$ then it is clear that $S=(S_t)^{\sigma}.$ Hence, the field of moduli  of $S \cong \mathscr{Z}_t$ is  $$\mbox{fix} \{\sigma \in  \mbox{Gal}(\mathbb{C}/\mathbb{Q}) : \sigma(t)=t\}=\mathbb{Q}(t)$$as desired.

\section{Some algebraic lemmata} \label{lemmatas} In this section we collect some facts related to the representations of the groups appearing in Theorem \ref{tm}; these results will be needed to prove Theorems \ref{t3} and \ref{t4}. Set $\omega_l=\exp(\tfrac{2 \pi i}{l}).$

\subsection*{Rational and complex irreducible representations}
\begin{lemm} \label{anto} Let $q \geqslant 5$ be a prime number. The group  \begin{equation*} \label{grupoxocho} G_8= \langle x,y,z: x^{2q}=y^2=z^2=1, [x,y]=[z,y]=1, zxz=x^{-1}y \rangle\end{equation*} has four complex irreducible representations of degree one  given by\begin{displaymath}
\chi_0^{1+}: \left\{ \begin{array}{ll}
x \mapsto 1\\
y \mapsto 1 \\
z \mapsto 1
  \end{array} \right. \,\,\, \chi_0^{2+}: \left\{ \begin{array}{ll}
x \mapsto 1\\
y \mapsto 1 \\
z \mapsto -1
  \end{array} \right. \,\,\, \chi_q^{1+}: \left\{ \begin{array}{ll}
x \mapsto -1\\
y \mapsto 1 \\
z \mapsto 1
  \end{array} \right.\,\,\, \chi_q^{2+}: \left\{ \begin{array}{ll}
x \mapsto -1\\
y \mapsto 1 \\
z \mapsto -1
  \end{array} \right.
\end{displaymath}and $2q-1$ complex irreducible representations of degree two given by
\s

\begin{center}
\begin{tabular}{|c|c|c|c|}
\hline
$\operatorname{Representation}$&$x$ & $y$& $z$\\ \hline

$\chi^{+}_j, \, 1 \leqslant j \leqslant q-1$ &$\operatorname{diag}(\omega_{2q}^j, -\omega_{2q}^{q-j})$ & $I_2$ & $J_2$\\[4 pt]

$\chi^{1-}_j, \, 0 \leqslant j \leqslant \frac{q-1}{2}$ &$\operatorname{diag}(\omega_{2q}^j, \omega_{2q}^{q-j})$ & $-I_2$ & $J_2$\\[4 pt]

$\chi^{2-}_j, \, 1 \leqslant j \leqslant \frac{q-1}{2}$ &$\operatorname{diag}(-\omega_{2q}^j, -\omega_{2q}^{q-j}) $ & $-I_2$ & $J_2$\\[4 pt]
\hline
\end{tabular}
\end{center} 
\s
where $I_2$ stands for the $2 \times 2$ identity matrix and $J_2=\left( \begin{smallmatrix}
0 & 1  \\
1 &  0 \\
\end{smallmatrix} \right).$ 
\end{lemm}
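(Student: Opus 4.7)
The plan is to apply Clifford theory to the abelian normal subgroup $H := \langle x, y\rangle$ of $G_8$ of index two. Since $y$ is central and $[x,y]=1$, $H$ is abelian, and the relation $zxz=x^{-1}y$ shows $zHz^{-1}\subseteq H$, so $H$ is normal in $G_8$. The orders of $x$ and $y$ force $|H|\le 4q$ and hence $|G_8|\le 8q$. Since the sum of squares of the degrees in the stated table equals $4+4(2q-1)=8q$, it suffices to exhibit the listed representations as pairwise non-isomorphic irreducibles: they will then exhaust $\widehat{G_8}$ and in particular force $|G_8|=8q$ and $H\cong C_{2q}\times C_2$.

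For the degree-one characters I would first compute the abelianization. The commutator $[x,z]=x^{-1}(zxz)=x^{-2}y$ forces $\bar y=\bar x^2$ in $G_8^{\mathrm{ab}}$; combined with $\bar y^2=\bar x^{2q}=1$ and the fact that $q$ is odd, this gives $\bar x^2=\bar y=1$, so $G_8^{\mathrm{ab}}\cong \langle\bar x\rangle\times\langle\bar z\rangle\cong C_2\times C_2$. The four resulting characters, obtained by mapping $x$ and $z$ independently to $\pm 1$ (with $y$ sent to $1$), are precisely $\chi_0^{1\pm}$ and $\chi_q^{1\pm}$ of the lemma.

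For the degree-two representations I would enumerate the orbits of $z$-conjugation on $\widehat H$. Writing a character of $H$ as $\chi_{a,b}:x\mapsto\omega_{2q}^{a},\ y\mapsto(-1)^b$, the identity $zxz=x^{-1}y$ together with $-1=\omega_{2q}^q$ gives $\chi_{a,b}^z=\chi_{-a+qb,\,b}$. The fixed-point condition $2a\equiv qb\pmod{2q}$ forces $b=0$ and $a\in\{0,q\}$ (using $q$ odd); these are exactly the two characters which extend in two ways each to the four one-dimensional representations found above. The remaining $4q-2$ characters split into $2q-1$ orbits of length two, and Clifford theory guarantees that inducing one representative per orbit yields $2q-1$ pairwise non-isomorphic irreducible representations of degree two.

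Finally, the explicit matrices follow from the standard block formula with coset representatives $\{1,z\}$: $\Ind_H^{G_8}\chi_{a,b}$ sends $h\in H$ to $\operatorname{diag}(\chi_{a,b}(h),\chi_{a,b}(zhz))$ and $z$ to $J_2$. On $x$ this gives $\operatorname{diag}(\omega_{2q}^{a},\omega_{2q}^{-a}(-1)^b)$, which via the identity $\omega_{2q}^{-a}=-\omega_{2q}^{q-a}$ reproduces the three families $\chi_j^+$, $\chi_j^{1-}$, $\chi_j^{2-}$ once one picks orbit representatives $(a,b)=(j,0)$, $(j,1)$, $(j+q,1)$ in the respective ranges. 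The main (but routine) obstacle is the arithmetic bookkeeping modulo $2q$ needed to verify that the three index ranges hit each non-fixed $z$-orbit exactly once.
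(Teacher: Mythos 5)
Your proof is correct and follows essentially the same route as the paper, which simply cites the Wigner--Mackey little-groups method (Serre, \S 8.2) applied to the index-two abelian normal subgroup $\langle x,y\rangle$; your orbit count on $\widehat{\langle x,y\rangle}$, the fixed-character analysis, and the matching of induced representations to the three families in the table all check out. The only addition you make beyond the paper's one-line citation is the degree-count argument pinning down $|G_8|=8q$, which is a reasonable way to dispose of the (implicit) fact that the presentation does not collapse.
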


\begin{proof} The proof  is  an application of the method of Wigner and Mackey  to built the complex irreducible representations of certain semidirect products. See, for example, \cite[\S 8.2]{serre}.
\end{proof}

\begin{lemm} \label{anto2} Let $q \geqslant 5$ be a prime number such that $q \equiv 1  \mbox{ mod }4.$ Let $\rho$ be a primitive fourth root of unity in $\mathbb{Z}_q$ and choose a maximal subset $\mathcal{P} \subset \mathbb{Z}_q^*$ of representatives of the  relation $k \sim k \rho  \sim -k\sim - k \rho$ over $\mathbb{Z}_q^*$. The group  $$G_4=\langle A, B: A^{q}=B^4=1, BAB^{-1}=A^{\rho} \rangle$$has four complex irreducible representations of degree one given by\begin{displaymath}
\chi_{1}: \left\{ \begin{array}{ll}
A \mapsto 1\\
B \mapsto 1
  \end{array} \right. \,\,\, \chi_{i}: \left\{ \begin{array}{ll}
A \mapsto 1\\
B \mapsto i
  \end{array} \right. \,\,\, \chi_{-1}: \left\{ \begin{array}{ll}
A \mapsto 1\\
B \mapsto -1
  \end{array} \right.\,\,\, \chi_{-i}: \left\{ \begin{array}{ll}
A \mapsto 1\\
B \mapsto -i
  \end{array} \right.
\end{displaymath}and $\tfrac{q-1}{4}$ complex irreducible representations of degree four, given by $$  \phi_j : A \mapsto \operatorname{diag}(\omega_q^{j}, \omega_q^{\rho j}, \omega_q^{-j}, \omega_q^{-\rho j}), \,\,  B \mapsto \left( \begin{smallmatrix}
0 & 0  & 0 & 1\\
1 &  0 & 0 & 0 \\
0 &  1 & 0 & 0 \\
0 &  0 & 1 & 0 \\
\end{smallmatrix} \right)$$for  $
j \in \mathcal{P}$. In addition, the rational irreducible representations of $G_4$ are $$\chi_1, \,\, \chi_{-1}, \,\, \chi_i \oplus \chi_{-i} \,\, \mbox{ and } \,\, \varrho = \oplus_{j \in \mathcal{P}} \phi_j$$
\end{lemm}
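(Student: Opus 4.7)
The plan is to combine Clifford--Mackey theory on the semidirect product $G_4 = C_q \rtimes_4 C_4$ with Galois descent in order to pass from the complex to the rational irreducibles.

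First, I would identify the one-dimensional characters via the abelianization. A direct computation gives $[B,A]=A^{\rho-1}$; since $\rho\not\equiv 1\pmod q$, we have $\gcd(\rho-1,q)=1$, so $A^{\rho-1}$ generates $\langle A\rangle$ and $[G_4,G_4]=\langle A\rangle$. Hence $G_4^{\mathrm{ab}}\cong C_4$, and lifting its four characters produces precisely $\chi_1,\chi_i,\chi_{-1},\chi_{-i}$.

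Next, I would construct the four-dimensional complex irreducibles via Clifford theory applied to the normal abelian subgroup $\langle A\rangle\cong C_q$. Its character group consists of $\psi_k\colon A\mapsto\omega_q^k$ for $k\in\mathbb{Z}_q$, and $G_4/\langle A\rangle\cong\langle B\rangle$ acts by $\psi_k\mapsto\psi_{k\rho^{-1}}$. Because $\rho$ has order $4$ in $\mathbb{Z}_q^{*}$ and $q$ is prime, every non-trivial orbit has the form $\{\psi_j,\psi_{\rho j},\psi_{-j},\psi_{-\rho j}\}$ and is of cardinality $4$; these orbits are indexed by $\mathcal{P}$. Since the inertia group of each $\psi_j$ is $\langle A\rangle$ itself, the representation $\phi_j:=\operatorname{Ind}_{\langle A\rangle}^{G_4}\psi_j$ is complex irreducible of dimension $4$. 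Writing out this induced module on the basis of coset representatives $\{1,B,B^2,B^3\}$ and using $B^{-i}AB^{i}=A^{\rho^{-i}}$ shows that $A$ acts diagonally with eigenvalue set $\{\omega_q^{j},\omega_q^{\rho j},\omega_q^{-j},\omega_q^{-\rho j}\}$ and $B$ acts as the cyclic $4$-cycle permutation matrix; up to reordering of the basis, this recovers the matrices in the statement. The dimension count $4\cdot 1^2+\tfrac{q-1}{4}\cdot 4^2 = 4q = |G_4|$ confirms completeness.

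For the rational irreducibles, I would group the complex ones into $\operatorname{Gal}(\overline{\mathbb{Q}}/\mathbb{Q})$-orbits. The characters $\chi_{\pm 1}$ take values in $\mathbb{Q}$ and are already rational irreducible; the pair $\{\chi_i,\chi_{-i}\}$ is a $\operatorname{Gal}(\mathbb{Q}(i)/\mathbb{Q})$-orbit and descends to $\mathbb{Q}$ through $A\mapsto I_2$, $B\mapsto\left(\begin{smallmatrix}0 & -1 \\ 1 & 0\end{smallmatrix}\right)$, giving the rational irreducible $\chi_i\oplus\chi_{-i}$ with Schur index $1$. Under $\operatorname{Gal}(\mathbb{Q}(\omega_q)/\mathbb{Q})\cong\mathbb{Z}_q^{*}$, acting via $\omega_q\mapsto\omega_q^{a}$, the character of $\phi_j$ is carried to that of $\phi_{aj}$. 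The stabilizer of the class of $j$ in $\mathbb{Z}_q^{*}/\langle\rho\rangle$ equals $\langle\rho\rangle$ exactly, so all $\phi_j$ with $j\in\mathcal{P}$ form a single Galois orbit of size $(q-1)/4$. Consequently $\varrho=\bigoplus_{j\in\mathcal{P}}\phi_j$ is defined over $\mathbb{Q}$.

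The main technical obstacle is to confirm that $\varrho$ is actually rational irreducible, i.e., that its Schur index equals $1$. This I would handle by exhibiting the explicit $\mathbb{Q}$-model on $V:=\mathbb{Q}(\omega_q)$ of $\mathbb{Q}$-dimension $q-1$, where $A$ acts by multiplication by $\omega_q$ and $B$ acts by the Galois automorphism $\omega_q\mapsto\omega_q^{\rho}$. The defining relations of $G_4$ hold on the nose, and decomposing $V\otimes_{\mathbb{Q}}\mathbb{C}$ as a $\langle A\rangle$-module into the $\omega_q^k$-eigenspaces ($k\in\mathbb{Z}_q^{*}$), on which $B$ cyclically permutes within orbits of length four, identifies $V\otimes\mathbb{C}$ with $\varrho$. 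This forces the Schur index to be $1$ and completes the classification.
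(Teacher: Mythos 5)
Your proposal is correct, and for the first part it is essentially the paper's argument made explicit: the authors simply cite the Wigner--Mackey method for semidirect products (Serre, \S 8.2), whereas you carry out the induction from $\langle A\rangle$, verify irreducibility via the trivial inertia condition, and check completeness by the dimension count $4\cdot 1^2+\tfrac{q-1}{4}\cdot 4^2=4q$. Where you genuinely diverge is the rational part. The paper counts conjugacy classes of cyclic subgroups of $G_4$ (there are four: the trivial group, $\langle A\rangle$, the class of $\langle B^2\rangle$, and the class of $\langle B\rangle$), invokes the classical fact that this number equals the number of rational irreducible representations, identifies three of them, and gets $\varrho$ by elimination. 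You instead compute the Galois orbits of the complex irreducibles directly --- noting that $\sigma_a$ carries $\phi_j$ to $\phi_{aj}$ and that the stabiliser of $\phi_j$ is exactly $\langle\rho\rangle\leqslant\mathbb{Z}_q^*$, so the $\phi_j$, $j\in\mathcal{P}$, form one orbit --- and then settle the Schur index by exhibiting the explicit $\mathbb{Q}$-form $V=\mathbb{Q}(\omega_q)$ with $A$ acting by multiplication by $\omega_q$ and $B$ by the field automorphism $\omega_q\mapsto\omega_q^{\rho}$. Your route is longer but more constructive: it produces a rational model of $\varrho$ (useful elsewhere in the paper, e.g.\ for the group algebra decomposition) rather than merely asserting its existence, while the paper's counting argument is shorter and avoids any Schur-index discussion. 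One cosmetic remark: the matrices displayed in the statement actually satisfy $BAB^{-1}=A^{\rho^{-1}}$ rather than $A^{\rho}$, which is harmless since $\rho^{-1}$ is also a primitive fourth root of unity (replace $B$ by $B^{-1}$); your hedge ``up to reordering of the basis'' covers this.
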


\begin{proof} The construction of the representations follows from \cite[\S 8.2]{serre} and we only need to prove the last statement. As the $G_4$  has four conjugacy classes of cyclic subgroups, it has  four pairwise non-equivalent rational irreducible representations: three of them are $\chi_1, \chi_{-1}$ and $\chi_i \oplus \chi_{-i}$. It follows that $\phi_j$ are Galois conjugate and added up together produce the remaining rational irreducible representation.
\end{proof}

\subsection*{Symmetric square's character formula} Let $H$ be a finite group and let $\rho: H \to \mbox{GL}(V)$ be a complex representation of $H.$ Consider the associated  representation of $H$ on the symmetric square vector space of $V$ $$\mbox{Sym}^2(\rho) : H \to \mbox{GL}(\mbox{Sym}^2(V)).$$ According to \cite[Proposition 2.3]{serre}, the character $\chi_{\rho}^{\tiny \mbox{sym}}$ of $\mbox{Sym}^2(\rho)$ is given by \begin{equation}\label{eq:sim67}
 \chi_{\rho}^{\sy}(h)=\tfrac{1}{2}[\chi_{\rho}(h)^2+\chi_{\rho}(h^2)] \,\, \mbox{ for }\,\, h \in H,\end{equation}where $\chi_{\rho}$ denotes the character of $\rho.$ 

\begin{lemm}\label{L:potatoeN} Let $\chi$ be a character of $H$ and let $\bar{\chi}$ denote its complex-conjugate. Then
\begin{equation*} \label{mora}\Sigma_{h \in H}\chi^{\sy}=\tfrac{1}{2}[\Sigma_{h \in H}(\chi + \bar{\chi})^{\sy}(h)-|H|\langle \chi|\chi\rangle_H]\end{equation*}
\end{lemm}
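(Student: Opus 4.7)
The plan is to expand both sides using the character formula \eqref{eq:sim67} for the symmetric square and then match terms. First I would apply the defining identity $\chi^{\sy}(h)=\tfrac{1}{2}[\chi(h)^{2}+\chi(h^{2})]$ to the character $\chi+\bar\chi$, obtaining
\[
(\chi+\bar\chi)^{\sy}(h)=\tfrac12\bigl[(\chi(h)+\bar\chi(h))^{2}+\chi(h^{2})+\bar\chi(h^{2})\bigr].
\]
Expanding the square and regrouping, the right-hand side becomes $\chi^{\sy}(h)+\bar\chi^{\sy}(h)+\chi(h)\bar\chi(h)$, where I use that $\bar\chi^{\sy}(h)=\tfrac12[\bar\chi(h)^{2}+\bar\chi(h^{2})]$ coincides with $\overline{\chi^{\sy}(h)}$.

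Next I would sum this identity over $h\in H$. The cross term produces $\sum_{h\in H}\chi(h)\bar\chi(h)=|H|\langle\chi|\chi\rangle_{H}$ by the very definition of the inner product of characters. The key remaining observation is that $\sum_{h\in H}\bar\chi^{\sy}(h)=\sum_{h\in H}\chi^{\sy}(h)$. This is because $\sum_{h\in H}\chi^{\sy}(h)=|H|\langle\chi^{\sy}|1\rangle_{H}$ equals the multiplicity of the trivial representation in $\mbox{Sym}^{2}\rho$, hence is a non-negative integer, and therefore equal to its complex conjugate $\sum_{h\in H}\bar\chi^{\sy}(h)$.

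Combining these remarks gives
\[
\sum_{h\in H}(\chi+\bar\chi)^{\sy}(h)=2\sum_{h\in H}\chi^{\sy}(h)+|H|\langle\chi|\chi\rangle_{H},
\]
and a rearrangement yields the claimed identity. There is no genuine obstacle in this lemma: the only subtlety is recognising that $\sum_{h}\chi^{\sy}(h)$ is real (indeed a non-negative integer), so that the ``$\chi^{\sy}$'' and ``$\bar\chi^{\sy}$'' sums can be merged into a single factor of two.
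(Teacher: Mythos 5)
Your proof is correct and follows essentially the same route as the paper: both expand $(\chi+\bar\chi)^{\sy}=\chi^{\sy}+\bar\chi^{\sy}+\chi\bar\chi$ and identify the cross term $\Sigma_{h}\chi(h)\bar\chi(h)$ with $|H|\langle\chi|\chi\rangle_H$. The only cosmetic difference is the justification of $\Sigma_{h}\bar\chi^{\sy}(h)=\Sigma_{h}\chi^{\sy}(h)$: the paper reindexes the sum via $h\mapsto h^{-1}$ using $\bar\chi^{\sy}(h)=\chi^{\sy}(h^{-1})$, whereas you observe that the sum is $|H|$ times the multiplicity of the trivial representation in $\mbox{Sym}^2\rho$ and hence real; both arguments are valid.
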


\begin{proof} According to \cite[Exercise 2.1]{serre}, for any pair of characters $\chi_1$ and $\chi_2$ we have\begin{equation*} (\chi_1 + \chi_2)^{\sy}=\chi_1^{\sy}+\chi_2^{\sy}+\chi_1\chi_2.\end{equation*}
If we write $\chi_1=\chi$ and $\chi_2=\bar{\chi}$  then the previous equality implies\begin{equation*}
\Sigma_{h \in H}\chi^{\sy}(h)=\Sigma_{h\in H}[(\chi + \bar{\chi})^{\sy}(h)-(\bar{\chi})^{\sy}(h)-\chi(h)\bar{\chi}(h)].
\end{equation*}Since $$\Sigma_{h\in H} (\bar{\chi})^{\sy}(h)=\Sigma_{h\in H} \chi^{\sy}(h^{-1})=\Sigma_{h\in H} \chi^{\sy}(h),$$the conclusion follows after noticing that $\Sigma_{h\in H}\chi(h)\bar{\chi}(h)=|H| \langle \chi | \chi\rangle_H$.
\end{proof}

\subsection*{The analytic representation} Let $S$ be a compact Riemann surface of genus $g$ and let $H$ be a group of automorphisms of $S$. The action of $H$  induces a complex representation $$\rho_a : H \to \mbox{GL}(\mathscr{H}^{1,0}(S, \mathbb{C})) \cong \mbox{GL}(g, \mathbb{C}),$$called the analytic representation of $H.$ Let $\text{Irr}(H)$ denote the set of complex irreducible representations of $H$, up to equivalence. If we write
$$\rho_a \cong \oplus_{\rho \in\text{Irr}(H)}\mu_{\rho} \rho \,\, \mbox{ where }\,\, \mu_{\rho} \in \mathbb{N} \cup \{0\}
$$then $\mu_{\rho}$ can be computed using the classically known Chevalley-Weil formula; see \cite{chevw}.

\begin{lemm} \label{chew} Assume that the action of $H$ on $S$ has signature $\sigma=(0; k_1, \ldots, k_s)$ and is represented by the ske $\theta : \Delta \to H$ where $\Delta$ is a Fuchsian group of signature $\sigma$ canonically presented as in \eqref{prese}. Then $\mu_{\rho}=0$ if $\rho$ is the trivial representation; otherwise   \begin{equation}\label{eq:chevw}
\mu_{\rho}=-d_{\rho}+\Sigma_{l=1}^s\Sigma_{j=1}^{k_l} N_{l,j}^{\rho}  ( 1 - \tfrac{j}{k_l})
\end{equation}where $d_{\rho}$ is the degree of $\rho$ and $N_{l,j}^{\rho}$ is the number of eigenvalues of $\rho(\theta(x_l))$ that equal $\omega_{k_l}^j$. \end{lemm}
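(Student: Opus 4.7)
The plan is to derive the formula from the holomorphic Lefschetz (Eichler) trace formula combined with Schur orthogonality. The multiplicity of $\rho$ in the analytic representation is $\mu_\rho = \langle \chi_{\rho_a}, \chi_\rho\rangle_H = \tfrac{1}{|H|} \sum_{h \in H} \chi_{\rho_a}(h) \overline{\chi_\rho(h)}$. I would isolate the contribution of $h = 1$, which is $g \cdot d_\rho/|H|$, and for $h \neq 1$ apply Eichler's formula
$$\chi_{\rho_a}(h) = 1 + \sum_{P \in \Fix(h)} \nu(h, P),$$
where $\nu(h, P)$ depends only on the rotation number of $h$ at the fixed point $P$ (a $k_l$-th root of unity coming from the stabiliser of $P$).

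Next I would parametrise the fixed-point set using the ske data. Since $\gamma = 0$, the covering $S \to S/H \cong \mathbb{P}^1$ has exactly $s$ branch values, and the preimages of the $l$-th branch value correspond bijectively to the cosets $H/C_l$, where $C_l = \langle \theta(x_l)\rangle$, with stabilisers conjugate to $C_l$. Thus a pair $(h, P)$ with $h \neq 1$ and $P \in \Fix(h)$ corresponds uniquely to a triple $(l, gC_l, j)$ with $j \in \{1, \ldots, k_l - 1\}$ and $h = g\theta(x_l)^j g^{-1}$; moreover the local rotation number of $h$ at this $P$ equals $\omega_{k_l}^j$. Exchanging the order of summation and exploiting that $\chi_\rho$ is a class function (so the coset sum produces a factor $|H|/k_l$), together with the identity $\sum_{h \in H}\overline{\chi_\rho(h)} = 0$, valid because $\rho$ is non-trivial, one obtains
$$\mu_\rho = \tfrac{(g-1)d_\rho}{|H|} + \sum_{l=1}^s \tfrac{1}{k_l} \sum_{j=1}^{k_l - 1}\overline{\chi_\rho(\theta(x_l)^j)}\,\nu_{l,j},$$
where $\nu_{l,j}$ is the common value of $\nu(h, P)$ for any pair with local rotation $\omega_{k_l}^j$.

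The final step is to evaluate the inner double sum. Expanding $\chi_\rho(\theta(x_l)^j) = \sum_m N_{l,m}^{\rho}\omega_{k_l}^{jm}$ and using the elementary identity $\sum_{m=1}^{k-1} m\zeta^m = k/(\zeta - 1)$ for a non-trivial $k$-th root of unity $\zeta$, one can rewrite the sum as an explicit $\mathbb{Z}$-linear combination of the numbers $N_{l,m}^{\rho}$, with coefficients linear in $m/k_l$ plus constant terms proportional to $d_\rho$. Applying the Riemann--Hurwitz identity $\tfrac{2(g-1)}{|H|} = -2 + \sum_l(1 - 1/k_l)$ to absorb the constant $d_\rho$-terms then produces the announced formula. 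For the trivial representation, the multiplicity equals the dimension $\gamma = 0$ of $\mathscr{H}^{1,0}(S/H, \mathbb{C}) \cong \mathscr{H}^{1,0}(S, \mathbb{C})^H$. The main technical obstacle will be keeping the sign and normalisation conventions aligned: Eichler's formula, the partial-fraction identity, and the orientation convention on tangent spaces must all be calibrated so that the combinatorial factor appearing is exactly $(1 - j/k_l)$ rather than some equivalent but differently normalised expression such as $\{-j/k_l\}$ or $j/k_l$.
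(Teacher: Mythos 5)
The paper does not actually prove this lemma: it is the classical Chevalley--Weil formula, stated in the genus-zero case and quoted directly from \cite{chevw}, so you are supplying a proof where the authors supply a citation. Your route --- Schur orthogonality applied to $\mu_\rho=\langle\chi_{\rho_a},\chi_\rho\rangle_H$, the Eichler/holomorphic Lefschetz trace formula for $h\neq 1$, the parametrisation of incidence pairs $(h,P)$ with $hP=P$ by triples $(l,gC_l,j)$, and evaluation of the resulting character sums followed by Riemann--Hurwitz --- is one of the standard derivations, and the bookkeeping does close. Concretely, writing $\zeta=\omega_{k_l}$ and taking Eichler's formula as $\chi_{\rho_a}(h)=1+\sum_{P\in \Fix(h)}\lambda_P/(1-\lambda_P)$ with $\lambda_P$ the tangent eigenvalue, and the ske convention that $\theta(x_l)$ rotates by $\zeta$ at its distinguished fixed point, the recursion $f(a+1)=f(a)+1-k_l[a\equiv 0]$ for $f(a)=\sum_{j=1}^{k_l-1}\zeta^{ja}/(1-\zeta^{j})$ (equivalent to your identity $\sum_{m=1}^{k-1}m\zeta^m=k/(\zeta-1)$) gives $f(a)=a'-\tfrac{k_l+1}{2}$ with $a'$ the representative of $a$ in $\{1,\dots,k_l\}$; since the representative of $1-m$ is $k_l+1-m$, the inner sum becomes $\tfrac{1}{k_l}\sum_m N^\rho_{l,m}\bigl(\tfrac{k_l+1}{2}-m\bigr)$, and adding $\tfrac{(g-1)d_\rho}{|H|}=-d_\rho+\tfrac{d_\rho}{2}\sum_l\tfrac{k_l-1}{k_l}$ yields exactly $-d_\rho+\sum_l\sum_m N^\rho_{l,m}(1-\tfrac{m}{k_l})$. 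Two points deserve emphasis: the step $\sum_{h\neq 1}\overline{\chi_\rho(h)}=-d_\rho$ is where non-triviality of $\rho$ enters (for the trivial representation the multiplicity is $\gamma=0$, as you note), and the sensitivity you flag is real --- with the opposite rotation convention the identical computation returns $\tfrac{j}{k_l}$ in place of $1-\tfrac{j}{k_l}$, so the calibration against a known example (e.g.\ the hyperelliptic involution, which must act as $-1$ on all of $\mathscr{H}^{1,0}$) is not optional. What the citation buys the paper is brevity; what your argument buys is self-containedness at the cost of carrying those conventions carefully.
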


\subsection*{Computation of the dimension $N_{S,G}$} 

Let $S$ be a compact Riemann surface of genus $g.$ As mentioned in  \S \ref{ff2}, following  \cite{ST} and \cite[Lemma 3.8]{paola} together with  the  formula \eqref{eq:sim67}, the dimension of  (the component which contains $JS$ of) the submanifold $\mathscr{S}_S$ of $\mathscr{H}_g$ of matrices representing ppavs admitting an action  equivalent to the one of $\mbox{Aut}(S)$   is 
\begin{equation} \label{Nsinrac}N_{S,G}=\langle  \chi_{\rho_{a}}^{\sy} | 1 \rangle_G=\tfrac{1}{|G|}\Sigma_{g \in G} \chi_{\rho_{a}}^{\sy}(g)=\tfrac{1}{2|G|}\Sigma_{g \in G} [\chi_{\rho_{a}}(g)^2+\chi_{\rho_a}(g^2)]\end{equation}where $\rho_a$ is the analytic representation of $G=\mbox{Aut}(S).$

\s

As a direct consequence of Lemma \ref{L:potatoeN}, the previous equality can be rewritten as follows.

\begin{lemm} \label{ffinal} 
If $\chi_{\rho_a}$ denotes the character of the analytic  representation of $G=\operatorname{Aut}(S)$ then $$N_{S,G}=\tfrac{1}{2|G|}[\Sigma_{g \in G}(\chi_{\rho_a} + \bar{\chi}_{\rho_a})^{\sy}(g)-|G|\langle \chi_{\rho_a}|\chi_{\rho_a}\rangle_G].$$
\end{lemm}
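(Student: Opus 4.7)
The plan is to combine two ingredients that are already in hand: the baseline formula \eqref{Nsinrac} for $N_{S,G}$ as the average of $\chi_{\rho_a}^{\sy}$ over $G$, and the identity of Lemma \ref{L:potatoeN} rewriting a sum of $\chi^{\sy}$ in terms of $(\chi+\bar\chi)^{\sy}$ and the inner product $\langle\chi|\chi\rangle$. Since the target formula differs from \eqref{Nsinrac} only in how the symmetric-square character is expressed, the proof should be essentially a one-line substitution.

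Concretely, I would first recall from \S\ref{ff2} and \eqref{Nsinrac} that
\[
N_{S,G}=\langle \chi_{\rho_a}^{\sy}\,|\,1\rangle_G=\frac{1}{|G|}\sum_{g\in G}\chi_{\rho_a}^{\sy}(g).
\]
Then I would apply Lemma \ref{L:potatoeN} with $H=G$ and $\chi=\chi_{\rho_a}$, which gives
\[
\sum_{g\in G}\chi_{\rho_a}^{\sy}(g)=\tfrac{1}{2}\Bigl[\sum_{g\in G}(\chi_{\rho_a}+\bar\chi_{\rho_a})^{\sy}(g)-|G|\,\langle\chi_{\rho_a}|\chi_{\rho_a}\rangle_G\Bigr].
\]
Dividing by $|G|$ yields the stated identity.

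There is essentially no obstacle: the non-trivial content is carried by Lemma \ref{L:potatoeN} (which is proved via the standard identity $(\chi_1+\chi_2)^{\sy}=\chi_1^{\sy}+\chi_2^{\sy}+\chi_1\chi_2$ together with the fact that $\bar\chi^{\sy}(g)=\chi^{\sy}(g^{-1})$ and the reindexing $g\mapsto g^{-1}$ on $G$) and by equation \eqref{Nsinrac} (which encodes Streit's formula and \eqref{eq:sim67}). The lemma being stated is simply a convenient repackaging of these two facts, designed to be applied later in \S\ref{pt3t4} where $(\chi_{\rho_a}+\bar\chi_{\rho_a})^{\sy}$ is easier to evaluate than $\chi_{\rho_a}^{\sy}$ itself (because $\chi_{\rho_a}+\bar\chi_{\rho_a}$ is the character of the real representation $\rho_a\oplus\bar\rho_a$, whose decomposition into rational irreducibles can be read off from the Chevalley--Weil formula of Lemma \ref{chew}).
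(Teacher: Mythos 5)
Your proposal is correct and matches the paper exactly: the lemma is stated there as "a direct consequence of Lemma \ref{L:potatoeN}" applied to the formula \eqref{Nsinrac}, which is precisely the one-line substitution you carry out. Nothing further is needed.
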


\begin{remark} \mbox{}
\begin{enumerate}

\item  The computation of $N_{S,G}$ depends both on the group $G$ and on its action on the Riemann surface $S.$ It then makes sense to compute $N_{S,H}$ for a subgroup $H$ of $G=\mbox{Aut}(S).$ When considering $G=\mbox{Aut}(S)$ we write $N_S$ instead of $N_{S,G}.$
 
\s
\item  Observe that if $H_1 \leqslant H_2$ are two groups of automorphisms of $S$ then  $N_{S,H_2} \leqslant N_{S, H_1}.$ In particular, if $N_{S,H}=0$ for some group of automorphisms $H$ of $S$ then $N_S=0.$
\s
\item According to \eqref{eq:sim67}, the summand  $(\chi_{\rho_a} + \bar{\chi}_{\rho_a})^{\sy}(g)$ in Lemma \ref{ffinal} equals\begin{equation}\label{eq:oplus}(
\chi_{\rho_a} + \bar{\chi}_{\rho_a})^{\sy}(g)=\tfrac{1}{2}[ (\chi_{\rho_a} + \bar{\chi}_{\rho_a})^2(g)+(\chi_{\rho_a} + \bar{\chi}_{\rho_a})(g^2)]
\end{equation}and is, indeed, the sum of two rational numbers.
\end{enumerate}
\end{remark}

\section{Proof of Theorems \ref{t3} and \ref{t4}} \label{pt3t4}

\subsection*{The surface $X_8$} With the notations of Lemma \ref{anto},  the representations $$\chi^{1-}_j \,\, \mbox{ and }\,\, \chi^{2-}_j \,\, \mbox{ for }\,\, j=1,\dots , \tfrac{q-1}{2}$$of $G_8$ are pairwise Galois conjugate, showing that their direct sum yield a rational irreducible representation of degree $2(q-1)$. We denote by $\varrho$ this last representation, namely$$\varrho \cong  ( \oplus_{j=1}^{\frac{q-1}{2}} \chi^{1-}_j ) \oplus (\oplus_{j=1}^{\frac{q-1}{2}} \chi^{2-}_j).$$

 As explained in \S\ref{jjo}, the group algebra decomposition of $JX_8$ with respect to  $G_8 \cong \mbox{Aut}(X_8)$ has the form $$JX_8 \sim B^n \times P$$where $B$ is the abelian subvariety of $JX_8$ associated to $\varrho$ and $P$ is the product of the factors associated to the remaining rational irreducible representations of $G_8.$ Following \cite[Proposition (10.8)]{isaacs}, the  Schur index of $\chi^{1-}_1$ is one  and then $n=2.$

\s

We recall  that the action of $G_8$ on $X_8$ is represented by the ske $\theta= (z,zx,x^{-1}).$  The dimension of the fixed subspaces of $\chi^{1-}_1$ under the action of the subgroups $\langle z \rangle, \langle zx \rangle$ and $\langle x^{-1} \rangle$ is $1,0$ and $0$ respectively; this is clear by noticing that  $\chi^{1-}_1(z)=J_2$ and that $$\chi^{1-}_1(zx)=\left(\begin{smallmatrix}0&  \omega_{2q}^{q-1} \\ \omega_{2q}  &0\\ \end{smallmatrix}\right) \,\, \mbox{ and } \,\, \chi^{1-}_1(x^{-1})= \operatorname{diag}(\omega_{2q}^{-1}, \omega_{2q}^{1-q})$$ do not have $1$ as an eigenvalue. In addition, it is easy to see that the character field of $\chi^{1-}_1$ has degree $q-1$ over the rationals. We then apply the equation \eqref{dimensiones1} to conclude that$$\dim B=(q-1)\left[-2+\tfrac{1}{2}\left( (2-1)+(2-0)+(2-0)\right)\right]=\tfrac{q-1}{2}.$$

Since the dimension of $JX_8$ is $q-1$, it follows that $P=0$ and therefore \begin{equation} \label{dia1}JX_8 \sim B^2.\end{equation}

Finally, we consider the subgroup $\langle z \rangle$ of $G_8$ and write $Y_8=X_8 / \langle z \rangle.$ The induced isogeny \eqref{decoind1} applied to \eqref{dia1} implies that $$JY_8=J(X_8/\langle z \rangle) \sim B \,\, \mbox{ and therefore }\,\, JX_8 \sim JY_8^2$$as claimed in Theorem \ref{t3}.

\s

We now proceed to prove that $N_{X_8}=0.$ Let $H = \langle x \rangle \cong C_{2q}$ and consider the maps$$\rho_k : H \to \mathbb{C} \,\, \mbox{ given by } \,\, x \mapsto \omega_{2q}^k \,\, \mbox{ for  } \,\, k=0, \ldots, 2q-1.$$

We claim that the analytic representation $\rho_a$  of $H$ decomposes as the direct sum$$\rho_a \cong \oplus_{j=q+1}^{2q-1} \rho_j.$$

To prove that, observe that $\{\rho_0,\dots,\rho_{2q-1}\}$ is a full set of pairwise non-equivalent complex irreducible representations of $H.$ Besides, as noticed in the proof of Proposition \ref{lapiz2}, the induced action of $H$ on $X_8$ has signature $(0; q, 2q,2q)$ and  is represented by the ske $(x^2,x^{-1}, x^{-1})$ (see also the algorithm in \cite{poly2} based on \cite{singerman2}). If we write$$\rho_a  \cong  \oplus_{k=0}^{2q-1} \mu_k \rho_k \,\, \mbox{ for some }\,\, \mu_k \in \mathbb{N} \cup \{0\}$$then, according to Lemma \ref{chew}, we have $\mu_0=0$ and  $$\mu_k=-1+ 2\Sigma_{j=1}^{2q-1}N_{1,j}^k ( 1 - \tfrac{j}{2q}) + \Sigma_{j=1}^{q-1}N_{2,j}^k( 1 - \tfrac{j}{q}),$$for each  $k\in \{1,\dots , 2q-1\},$ where $N_{1,j}^k=1$ if and only if $j=2q-k$  and 
$$N_{2,j}^k=\begin{cases} 1  \text{ if and only if } j=k  \text{ for } k\leqslant q \\ 1  \text{ if and only if } j=k-q  \text{ for } k\geqslant q+1.\\ \end{cases}$$In this way, we obtain if  $k\leqslant q$ then $\mu_k=0$ and if $k\geqslant q+1$ then $\mu_k=1.$ The claim follows.

\s

We then  can construct the following table
\s
\s

\begin{center}
\begin{tabular}{|c|c|c|c|}
\hline
 $h$ & order &$(\chi_{\rho_a} + \bar{\chi}_{\rho_a})^2(h)$ & $(\chi_{\rho_a} + \bar{\chi}_{\rho_a})(h^2)$ \\ \hline
1 & 1 & $(2(q-1))^2$ & $2(q-1)$\\ \hline
$x^q$ & $2$ &  $0$ & $2(q-1)$\\ \hline
$x^{2k}, \,\, 1\leqslant k \leqslant q-1$& $q$  & $(-2)^2$ & $-2$ \\ 
 \hline
$x^{2k-1}, \, 1\leqslant k \leqslant q, \, k \neq \tfrac{q+1}{2}$& $2q$ & $0$ & $-2$ \\ 

\hline
\end{tabular}
\end{center}
\s
\s
to obtain, by \eqref{eq:oplus}, that $\Sigma_{h\in H}(\chi_{\rho_a} + \bar{\chi}_{\rho_a})^{\sy}(h)$ equals$$\tfrac{1}{2}\left( 4(q-1)^2+2(q-1)+2(q-1)+2(q-1)-2(q-1)\right)=2(q^2-q).$$Now, the fact that $|H| \langle \chi_{\rho_a} | \chi_{\rho_a}\rangle_H=2q(q-1),$ together with Lemma \ref{L:potatoeN}   imply that$$\Sigma_{h\in H}\chi_{\rho_a}^{sym}(h)=\tfrac{1}{2}[2(q^2-q)-2q(q-1)]=0$$and therefore we obtain $N_{X_8, H}=0.$ Hence $N_{X_8}=0$ as claimed in Theorem \ref{t4}.

\subsection*{The surface $X_4$} With the notations of Lemma \ref{anto2}, the group algebra decomposition of $JX_4$ with respect to  $G_4 \cong \mbox{Aut}(X_4)$ is $$JX_4 \sim D_{1} \times D_{i}\times D_{-1}  \times D^4$$where the factor $D_k$ is associated to the representation $\chi_k$ and $D$ is associated to $\varrho.$ Observe that the character field of each $\phi_j$ has degree $\tfrac{q-1}{4}$ over the rationals.

   We recall  that the action of  $G_4$ on $X_4$ is represented by the ske $ \Theta = (A^{-1}B, B^{-1}, A)$. The dimension of the fixed subspace of $\phi_j$ under the action of $\langle A^{-1}B \rangle, \langle B^{-1}\rangle$ and $\langle A \rangle$ is $1, 1$ and $0$ respectively. Consequently,  the equation  \eqref{dimensiones1} implies $$\dim D=\tfrac{q-1}{4}\left[-4+\tfrac{1}{2}\left((4-1)+(4-1)+(4-0)\right)\right]=\tfrac{q-1}{4}.$$The previous equality shows, in addition, that $$D_{1} = D_{i}= D_{-1}=0 \, \, \mbox{ and therefore } \,\, JX_4\sim D^4.$$

Finally, we consider the subgroup $\langle B \rangle$ of $G_4$ and write $Y_4=X_4 / \langle B \rangle.$ The induced isogeny \eqref{decoind1} applied to the previous isogeny implies that $$JY_4=J(X_4/\langle B \rangle) \sim D \,\, \mbox{ and therefore }\,\, JX_4 \sim JY_4^4$$as claimed in Theorem \ref{t3}.

\s

We now proceed to prove that $N_{X_4}=\tfrac{q-1}{4}.$ A routine application of Lemma \ref{chew} shows that the analytic representation $\rho_a$ of $G_4$ is $$\rho_a \cong \oplus_{j \in \mathcal{P}} \phi_j \,\, \mbox{ and therefore }\,\, |G_4|\langle \chi_{\rho_a}|\chi_{\rho_a}\rangle_{G_4}=q(q-1)$$where $\mathcal{P}$ is as in Lemma \ref{anto2}. It is not difficult to see that\begin{displaymath}
(\chi_{\rho_a} + \bar{\chi}_{\rho_a})(g) = \left\{ \begin{array}{ll}
 2(q-1) & \textrm{if $g=1$}\\
\hspace{0.6 cm} 0 & \textrm{if $|g|=2,4$}\\
\hspace{0.3 cm} -2 & \textrm{if $|g|=q$}
  \end{array} \right.
\end{displaymath}and therefore \eqref{eq:oplus} allows us to write that $(\chi_{\rho_a} + \bar{\chi}_{\rho_a})^{\sy}(1)=(q-1)(2q-1)$ and that\begin{displaymath}
\Sigma_{g\in G_4,\; |g|=\epsilon}(\chi_{\rho_a} + \bar{\chi}_{\rho_a})^{\sy}(g) = \left\{ \begin{array}{ll}
q(q-1) & \textrm{if $\epsilon=2$}\\
\hspace{0.6 cm} 0 & \textrm{if $\epsilon=4$}\\
\hspace{0.3 cm} q-1 & \textrm{if $\epsilon=q.$}
  \end{array} \right.
\end{displaymath}

Thus, $\Sigma_{g\in G_4}(\chi_{\rho_a} + \bar{\chi}_{\rho_a})^{\sy}(g)=3q(q-1).$ Now, it follows from Lemma \ref{L:potatoeN}  that$$\Sigma_{g\in G_4}\chi_{\rho_a}^{\sy}(g)=\tfrac{1}{2}\left(3q(q-1)-q(q-1)\right)=q(q-1),$$
and consequently Lemma \ref{ffinal} says that $N_{X_4}=\tfrac{q-1}{4},$ as desired.

\subsection*{The surface $X_3$} The complex irreducible representations of $G_3=\langle x : x^{3q}=1 \rangle$ are  $$\chi_k: G_3 \to \mathbb{C}, \,\, x \mapsto \omega_{3q}^k \text{ for } k \in \{0, \dots, 3q-1\}.$$

We denote by $\rho_{r}$ the complexification of the rational representation   corresponding  to the action of $G_3$ on $X_3.$ We claim that \begin{equation*}\label{eq:rho_rac_X3}\rho_{r}\cong \oplus_{\sigma \in \mathscr{G}} \chi_1^\sigma \,\, \mbox{ where }\,\, \mathscr{G}=\Gal(\mathbb{Q}(\omega_{3q})/\mathbb{Q}) \end{equation*}

In fact, according to \cite[Theorem 5.10]{yoibero}, the multiplicity of $\chi_1$ in the decomposition of $\rho_r$ as a sum of irreducible representations equals one. In addition, since $\rho_r$ is indeed defined over the rationals we can  deduce that all the orbit of $\chi_1$ under $\mathscr{G}$ appears in the decomposition of $\rho_{r}$. The claim follows after noticing that the aforementioned orbit has length $2(q-1)$ and this number agrees with the degree of $\rho_r.$ 

\s

Since $\rho_r \cong \rho_a \oplus \bar{\rho}_a,$ the previous claim says that 
$\rho_a$ decomposes into $q-1$ pairwise non-equivalent  complex irreducible representations of degree one of $G_3$ and thereby\begin{equation} \label{analiticas}\langle \chi_{\rho_a} | \chi_{\rho_a}\rangle_{G_3}=q-1.\end{equation}

In order to determine the character $\chi_{\rho_r}$ of $\rho_r$ it is convenient to decompose  $\rho_r$ in the following different but equivalent way. Let 
$$\Lambda(q)= \{ t \in \{1,\dots, 3q-1\} : \mbox{gcd}(t, 3q)=1 \}$$and consider the subset  $\Lambda'(q)$ of $\Lambda(q)$ of cardinality $q-1$  obtained by removing the additive inverses modulo $3q$ (that is, $k\in \Lambda'(q)$ then $-k \text{ mod } 3q \notin \Lambda'$). It is not difficult to see that $$\rho_{r}\cong \oplus_{k\in \Lambda'(q)} (\chi_k \oplus \bar{\chi}_k) \,\, \mbox{ and therefore }\,\,\chi_{\rho_r}=\Sigma_{k\in \Lambda'(q)} (\chi_k+\bar{\chi}_k).$$

\begin{enumerate}
\item Clearly $\chi_{\rho_r}(1)=2(q-1);$ the degree of $\rho_r.$
\s

\item For the elements of order $3$ (that is, $x^{q}$ and $x^{2q}$) we have that
$$ (\chi_k+\bar{\chi}_k)(x^q)=\omega_{3q}^{kq}+\omega_{3q}^{-kq}=-1 \text{ for each } k\in \Lambda' \,\, \implies \chi_{\rho_r}(x^q)=-(q-1). $$Analogously, one sees that $\chi_{\rho_r}(x^{2q})=-(q-1)$.

\s
\item For the elements of order $q$ (that is, $x^{3j}$ with $j\in \{1,\dots,q-1\}$) we have that$$ (\chi_k+\bar{\chi}_k)(x^{3j})=\omega_{3q}^{3kj} +\omega_{3q}^{-3kj}  = \omega_{q}^{kj} +\omega_{q}^{-kj}$$ for every $k\in \Lambda'$. 
Then$$\chi_{\rho_r}(x^{3j})=\Sigma_{k\in \Lambda'(q)} (\chi_k+\bar{\chi}_k)(x^{3j})=\Sigma_{k\in \Lambda'(q)}(\omega_{q}^{kj} +\omega_{q}^{-kj}),$$showing that$$\chi_{\rho_r}(x^{3j})=-2 \, \text{ for all } \, j \in \{1,\dots, q-1\}.$$

\item For the elements of order $3q$ (that is, $x^t$ with $t\in \Lambda(q)$) we have that $$\chi_{\rho_r}(x^t)=\Sigma_{k\in \Lambda'(q)} (\chi_k+\bar{\chi}_k)(x^t)=\Sigma_{k\in \Lambda'(q)}(\omega_{3q}^{kt}+\omega_{3q}^{-kt})$$ and this corresponds to the sum of all primitive $3q$-th  roots of unity. 
It is a known fact that this sum corresponds to the M\"{o}bius function $\mu(3q)$, which is $1$; thus$$\chi_{\rho_r}(x^t)=1 \,\, \mbox{ for every }\,\,t\in \Lambda(q).$$
\end{enumerate}

We summarise all the above in the third column of the following table; the fourth column follows from all the above and \eqref{eq:oplus}. 

\s
\s
\begin{center}
\begin{tabular}{|c|c|c|c|}
\hline
order & $g$ & $\chi_{\rho_r}(g)= (\chi_{\rho_a} + \bar{\chi}_{\rho_a})(g)$ & $2(\chi_{\rho_a} + \bar{\chi}_{\rho_a})^{\sy}(g)$\\ \hline
1 & 1 & $2(q-1)$ & $2(q-1)+(2(q-1))^2$\\ \hline
3 & $x^q, x^{2q}$ & $-(q-1)$ &  $-(q-1)+(q-1)^2$ \\ \hline
$q$ & $x^{3j}, \, j=1,..q-1$ & $-2$& $-2+4=2$\\
 \hline
 $3q$ & $x^t, \,t \in \Lambda(q)$ & $1$& $1+1=2$\\
  \hline
\end{tabular}
\end{center}
\s
\s

If follows that$$\Sigma_{g\in G_3}(\chi_{\rho_a} + \bar{\chi}_{\rho_a})^{\sy}(g)=3q(q-1)$$and the desired result $N_{X_3}=0$ follows  from Lemma \ref{L:potatoeN} together with the equality \eqref{analiticas}.

\subsection*{The surfaces $X_{2,k}$} Let $1 \leqslant k \leqslant \tfrac{q-3}{2}.$  Consider the complex irreducible representations of  $$\mbox{Aut}(X_{2,k}) \cong G_2 = \langle a, b: a^q=b^2=[a, b]=1 \rangle$$
given by $\chi_1: x \mapsto \omega_{2q}$ and $\chi_2: x \mapsto \omega_{q}$ where  $x:=ab.$ If $\mathscr{G}_1$ and $\mathscr{G}_2$ are the Galois groups of  the extensions of $\mathbb{Q}$ by $\mathbb{Q}(\omega_{2q})$ and $\mathbb{Q}(\omega_{q})$  respectively, then 
$$\oplus_{\sigma \in \mathscr{G}_1} \chi_1^{\sigma} \,\, \mbox{ and } \,\,  \oplus_{\sigma \in \mathscr{G}_2} \chi_2^{\sigma}$$are rational irreducible representations of $G_2$ of degree $q-1.$  

\s

Let $\rho_{r}$ denote the complexification of the rational representation   corresponding  to the action of $G_2$ on $X_{2,k}.$ By arguing as done in the case of $X_3,$ one obtains that 
\begin{equation*}\label{eq:rho_rac_X2j}
\rho_{r}\cong (\oplus_{\sigma \in \mathscr{G}_1} \chi_1^{\sigma}) \oplus (\oplus_{\sigma \in \mathscr{G}_2} \chi_2^{\sigma}),
\end{equation*}and if $\rho_a$ is the analytic representation of the involved action then \begin{equation}\label{copiapo}\langle \chi_{\rho_a} | \chi_{\rho_a}\rangle_{G_2}=q-1,\end{equation}where $\chi_{\rho_a}$ is the character of $\rho_a$. Moreover, the character $\chi_{\rho_r}$ of $\rho_{r}$ is given by\begin{displaymath}
\chi_{\rho_r} (g) = \left\{ \begin{array}{ll}
 2(q-1) & \textrm{if $g=1$}\\
\hspace{0.6 cm} 0 & \textrm{if $|g|=2$}\\
\hspace{0.3 cm} -2 & \textrm{if $|g|=q$}\\
\hspace{0.6 cm} 0& \textrm{if $|g|=2q$}
  \end{array} \right.
\end{displaymath}and therefore we can construct the following table.

\s
\s
\begin{center}
\begin{tabular}{|c |c| c| c|}
\hline
Order & $g$ & $\chi_{\rho_r}(g)= (\chi_{\rho_a} + \bar{\chi}_{\rho_a})(g)$ & $2(\chi_{\rho_a} + \bar{\chi}_{\rho_a})^{\sy}(g)$\\ \hline 
$1$ & $1$ & $2(q-1)$ & $2(q-1)+(2(q-1))^2$\\ \hline
$2$ & $b$ & $0$ &  $2(q-1)+0^2$ \\ \hline
$q$ & $a^j, \, j=1, \ldots, q-1$ & $-2$& $-2+(-2)^2=2$\\
 \hline
 $2q$ & $a^jb, \, \mbox{gcd}(j,2q)=1$ & 0& $-2+0^2$\\
  \hline
\end{tabular}
\end{center}
\s
\s

The desired conclusion $N_{X_{2,k}}=0$ follows from the previous table,  the equation \eqref{copiapo} and Lemma \ref{ffinal}.

\subsection*{The family ${\mathscr{C}}_g$} The complex representation $\psi : \mathbf{D}_{2q} \to \mbox{GL}(2, \mathbb{C})$ (with the dihedral group presented as in \eqref{frio})  given by $$\psi(R)=\mbox{diag}(\omega_{2q}, \omega_{2q}^{-1}) \,\, \mbox{ and } \,\,  \psi(T)= \,\, \left(\begin{smallmatrix}0&  1 \\ 1 &0\\ \end{smallmatrix}\right)$$ has Schur index $1$ and field of characters of degree $\tfrac{q-1}{2}$ over the rationals. It follows that the group algebra decomposition of $JS$ for each $S \in \mathscr{C}_g$ with respect to $\mathbf{D}_{2q}$ has the form \begin{equation}\label{hola6}JS \sim B^2 \times P\end{equation}where $B$ is the abelian subvariety of $JS$ associated to $\psi.$ The dimension of the fixed subspaces of $\psi$ under the action of $\langle R^q\rangle, \langle T \rangle, \langle TR\rangle$ and $\langle R^{q-1}\rangle$ equal  $0,1,1$ and $0$ respectively. Thereby, the equation  \eqref{dimensiones1} together with the fact that the action is represented by the ske $\theta=(R^q, T, TR, R^{q-1})$ imply that $$\dim B = \tfrac{q-1}{2}(-2+\tfrac{1}{2}\left((2-0)+(2-1)+(2-1)+(2-0))\right)=\tfrac{q-1}{2}$$and therefore $P=0.$ 
Now, we consider the subgroup $\langle T \rangle \cong C_2$ of $\mathbf{D}_{2q}$ and write $X=S / \langle T \rangle.$ The induced isogeny \eqref{decoind1} applied to \eqref{hola6} implies that $$JX=J(S/\langle T \rangle) \sim B \,\, \mbox{ and therefore }\,\, JS \sim JX^2.$$

A routine application of \eqref{eq:chevw} permits us to see that  the analytic representation $\rho_a$ of the action of $\mathbf{D}_{2q}$ on $S$ is equivalent to the Galois orbit of $\psi$;  namely$$\rho_a \cong \oplus_{\sigma} \psi^{\sigma}$$where $\sigma$ runs over the Galois group associated to character field of $\psi.$

\s

The following table (taken from \cite[Proposition 6.1]{IJR}) collects the character of ${\rho_a}$ and of its symmetric square for representatives of the conjugacy classes of the group.

\s
\s
\begin{center}
\begin{tabular}{|c|c|c|}
\hline
$g$ & $\chi_{\rho_a}$ & $\chi_{\rho_a}^{\sy}$ \\ \hline 
 1 & $q-1$ & $(q-1)^2+(q-1)$ \\ \hline
  $R^{2j-1}, \, j=1,\dots, q, j\neq \tfrac{q+1}{2}$ & 1& $1+(-1)$ \\ 
\hline
  $R^{2j}, \, j=1,\dots q-1$  & $-1$& $1+(-1)$ \\
    \hline
  $R^q$ & $-(q-1)$ & $(q-1)^2+(q-1)$ \\ \hline
   $T$ & 0 &  $0+(q-1)$ \\ \hline
  $TR$ & 0 &  $0+(q-1)$   \\ \hline
\end{tabular}
\end{center}
\s
\s
Thus, by the equation \eqref{Nsinrac}, we obtain that $$N_{S}=\tfrac{1}{8q}((q-1)^2+(q-1)+(q-1)^2+(q-1)+2q(q-1))=\tfrac{q-1}{2}.$$

\subsection*{The family ${\mathscr{K}}_g$} It is well-known  that $\mathbf{D}_q$ has two complex irreducible representations of degree one, and $\frac{q-1}{2}$ of degree two given by$$\psi_j: r\mapsto \text{diag}(\omega_q^j,\omega_q^{-j}) \text{ and } s\mapsto \left(\begin{smallmatrix} 0& 1 \\ 1&0\\ \end{smallmatrix}\right) \,\, \mbox{ where }\,\, j \in \{1, \ldots, \tfrac{q-1}{2}\};$$all of them are Galois conjugate. Clearly,   their direct sum $$W \cong \oplus_{j=1}^{\frac{q-1}{2}}\psi_j$$is a rational irreducible representation of $\mathbf{D}_q$ of degree $q-1.$  It follows that the group algebra decomposition of $JS$ for each $S \in \mathscr{K}_g-\mathscr{C}_g$ with respect to $\mathbf{D}_{q}$ has the form \begin{equation}\label{hola8}JS \sim B^2 \times P\end{equation}where $B$ is the abelian subvariety of $JS$ associated to $W.$ Observe that 
\begin{enumerate}
\item  The dimension of the fixed subspace of $\psi_1$ under  $\langle r^i\rangle$ equals 0, for each $1 \leqslant i \leqslant q-1$.
\item The dimension of the fixed subspace of $\psi_1$ under $\langle sr^i \rangle$ equals 1, for each $0 \leqslant i \leqslant q-1$.
\end{enumerate}

It follows that, independently of the equisymmetric stratum to which $S$ belongs (see the ske $\theta_i$ given in  the proof of Proposition \ref{ttex}), the equation  \eqref{dimensiones1} implies  $$\dim B= \tfrac{q-1}{2}[(-2+\tfrac{1}{2}((2-1)+(2-1)+(2-0)+(2-0))]=\tfrac{q-1}{2}.$$and therefore $P=0.$ 
Now, if $X=S / \langle s \rangle$ then  \eqref{decoind1} applied to \eqref{hola8} implies that $JS \sim JX^2.$

\s

By Lemma \ref{chew} one sees that the analytic representation $\rho_a$ of the action of $\mathbf{D}_q$ on $S$ is equivalent to $W$. The character of $\rho_a$ and of $\rho_a^{\sy}$ is summarised in the following table.

\s
\s

\begin{center}
\begin{tabular}{|c|c|c|}
\hline
$g$ & $\chi_{\rho_a}$ & $\chi_{\rho_a}^{\sy}$ \\ \hline 
 1 & $q-1$ & $(q-1)^2+(q-1)$ \\ \hline
  $sr^j, \, j=0,\dots, q-1$, & 0& $q-1$ \\ 
\hline
  $r^{j}, \, j=1,\dots, q-1$,  & $-1$& $-1+(-1)^2$ \\ \hline
\end{tabular}
\end{center}

\s
\s
Finally, by the equation  \eqref{Nsinrac} we obtain that 
$$N_{S}=\tfrac{1}{4q}\left((q-1)^2+(q-1)+q(q-1)\right)=\tfrac{q-1}{2}.$$

\section*{Addendum}

We recall here the fact that the full automorphism group of the Accola-Maclachlan curve $X_8$ determines its Jacobian variety $JX_8$ (that is, $N_{X_8}=0$) and therefore it allows us to determine its period matrix. In this addendum we determine explicitly the period matrix $$(I_4 \, Z) \,\, \mbox{ where } \,\, Z \in \mathscr{H}_4$$ of $JX_8$ provided that the genus of genus of $X_8$ equals four (that is, for $q=5$). To accomplish this task we apply the results on adapted hyperbolic polygons and algorithms  programed  in \cite{poly} to realise the action of the full automorphisms group of $X_8$ in the symplectic group. Explicitly, the rational representation $\rho_r : \mbox{Aut}(X_8) \to \mbox{Sp}(8, \mathbb{Z})$ is given by $$\rho_r(x^{-1})= \left(\begin{smallmatrix}   0 & 1&  1&  1& -1&  0&  0&  0\\ 1  &0&  0&  0&  0 &-1 & 0&  0\\ 0&  0&  0&  0&  0 & 1 &-1 & 0\\
 0&  0&  0&  0&  0&  0 & 1 &-1\\ 1&  0&  0&  0 & 0 & 0&  0&  0\\ 0 & 1 & 1 & 1 & 0 & 0&  0&  0\\ 0 & 0 & 1 & 1 & 0 & 0 & 0 & 0\\
 0 & 0&  0&  1 & 0 & 0&  0&  0\\ \end{smallmatrix}\right)
 \hspace{0.8 cm} \rho_r(zx)=\left(\begin{smallmatrix} 0 & 0&  1&  0 & 0 & 0 & 0 & 0\\ 1&  0&  0 & 0 & 0 &-1 & 0 & 0\\ -1 & 0 & 0&  0 & 0&  0 & 0 & 0\\
 0 & 0&  0&  0 & 0 & 0 & 0&  1\\ 0 & 0 & 0&  0&  0& -1 & 1 & 0\\ 0 & 1 & 1 & 0 & 0 & 0 & 0 & 0\\ 0&  1&  1&  0& -1&  0&  0&  0\\
  0 & 0&  0& -1&  0 & 0&  0 & 0\\ \end{smallmatrix}\right)
$$ Now, if we write $$Z=\left(\begin{smallmatrix}
a&b&c&d\\b&e&f&g\\c&f&h&j\\d&g&j&k\\ \end{smallmatrix}\right)$$then, with the notations of \S\ref{ff2}, the fact that $$ R \cdot Z = Z \,\, \mbox{ for each } \,\, R \in \langle \rho_r(x^{-1}), \rho_r(zx) \rangle$$ implies that the coefficients of $Z$  satisfy the  relations$$\begin{array}{lcl}
(1) \,\, a= -100/11 k^3 - 140/11 k, & \,\,\,\,\,  & (5) \,\, f= - 10/11k^3 - 39/22k, \\
(2) \,\, b= - 5k^2 - 31/4,& \,\,\,\,\, & (6)\,\,g= - 50/11k^3 - 151/22k, \\
(3) \,\, c=d=  5k^2 + 29/4, & \,\,\,\,\, & (7) \,\, h=k,\\
(4) \,\,e = 20/11k^3 + 39/11k, & \,\,\,\,\, & (8) \,\,j=  60/11k^3 + 84/11k,
\end{array}$$where the parameter $k$ satisfies the following equation
\begin{equation}\label{cigarro}k^4 + 5/2k^2 + 121/80=0.\end{equation}

The solutions of \eqref{cigarro} are$$k_1= -\tfrac{1}{2} i \, \sqrt{\tfrac{2}{5} \, \sqrt{5} + 5}, \,\, k_2= \tfrac{1}{2} i \, \sqrt{\tfrac{2}{5} \, \sqrt{5} + 5}, \,\, k_3= -\tfrac{1}{2} i \, \sqrt{-\tfrac{2}{5} \, \sqrt{5} + 5}, \,\, k_4= \tfrac{1}{2} i \, \sqrt{-\tfrac{2}{5} \, \sqrt{5} + 5}$$

The values $k_1$ and $k_4$ must be disregarded; indeed, 
$$a(k_1):= -100/11 k_1^3 - 140/11 k_1 \,\, \mbox{ and }\,\, a(k_4):= -100/11 k_4^3 - 140/11 k_4$$do not have imaginary part positive and therefore the corresponding matrices do not belong to $\mathscr{H}_4.$ Now, the fact that $Z \in \mathscr{H}_4$ also implies that $$ \delta := \mbox{det} \, \mbox{Im}\left(\begin{smallmatrix} a&b\\b&e\\ \end{smallmatrix}\right)  \mbox{ must be positive.}$$ With the help of numerical approximations of \cite{sage} one sees that  $\delta$ is positive only for $k=k_2.$

\s

We replace the value of $k=k_2$ in equalities $(1), \ldots, (8)$ to finally obtain that the period matrix of $JX_8$  is $(I_4 \, Z)$ where $Z$ is  given below.

{ \tiny 
$$\left(\begin{smallmatrix}
\frac{25}{22} i \, {\left(\frac{2}{5} \, \sqrt{5} + 5\right)}^{\frac{3}{2}} - \frac{70}{11} i \, \sqrt{\frac{2}{5} \, \sqrt{5} + 5} & \frac{1}{2} \, \sqrt{5} - \frac{3}{2} & -\frac{1}{2} \, \sqrt{5} + 1 & -\frac{1}{2} \, \sqrt{5} + 1 \\
\frac{1}{2} \, \sqrt{5} - \frac{3}{2} & -\frac{5}{22} i \, {\left(\frac{2}{5} \, \sqrt{5} + 5\right)}^{\frac{3}{2}} + \frac{39}{22} i \, \sqrt{\frac{2}{5} \, \sqrt{5} + 5} & \frac{5}{44} i \, {\left(\frac{2}{5} \, \sqrt{5} + 5\right)}^{\frac{3}{2}} - \frac{39}{44} i \, \sqrt{\frac{2}{5} \, \sqrt{5} + 5} & \frac{25}{44} i \, {\left(\frac{2}{5} \, \sqrt{5} + 5\right)}^{\frac{3}{2}} - \frac{151}{44} i \, \sqrt{\frac{2}{5} \, \sqrt{5} + 5} \\
-\frac{1}{2} \, \sqrt{5} + 1 & \frac{5}{44} i \, {\left(\frac{2}{5} \, \sqrt{5} + 5\right)}^{\frac{3}{2}} - \frac{39}{44} i \, \sqrt{\frac{2}{5} \, \sqrt{5} + 5} & \frac{1}{2} i \, \sqrt{\frac{2}{5} \, \sqrt{5} + 5} & -\frac{15}{22} i \, {\left(\frac{2}{5} \, \sqrt{5} + 5\right)}^{\frac{3}{2}} + \frac{42}{11} i \, \sqrt{\frac{2}{5} \, \sqrt{5} + 5} \\
-\frac{1}{2} \, \sqrt{5} + 1 & \frac{25}{44} i \, {\left(\frac{2}{5} \, \sqrt{5} + 5\right)}^{\frac{3}{2}} - \frac{151}{44} i \, \sqrt{\frac{2}{5} \, \sqrt{5} + 5} & -\frac{15}{22} i \, {\left(\frac{2}{5} \, \sqrt{5} + 5\right)}^{\frac{3}{2}} + \frac{42}{11} i \, \sqrt{\frac{2}{5} \, \sqrt{5} + 5} & \frac{1}{2} i \, \sqrt{\frac{2}{5} \, \sqrt{5} + 5}
\end{smallmatrix}\right)$$
}

\begin{remark} The results of \cite{poly} extend those of \cite{periodos}; thus, if we repeat this procedure for
the Accola-Maclachlan  curve of genus two, then we recover the period matrix given in \cite{periodos}.
\end{remark}

\s

\subsection*{Acknowledgements} The authors are very grateful to the referee for his/her  valuable comments and suggestions.

\end{document}